\newcommand{\note}[1]{ \textcolor{Maroon}{ \ [ \ #1 \  ] \ }} 
\newtheorem{thm}{Theorem}[section]
\newtheorem{proposition}[thm]{Proposition}
\newtheorem{lemma}[thm]{Lemma}
\newtheorem{remark}[thm]{Remark}
\newtheorem{hyp}{Hypothesis}
\newcommand{\eps}{\varepsilon}
\newcommand{\R}{\mathbb{R}}
\newcommand{\D}{{\mathrm D}}
\newcommand{\Lop}{{\mathrm L}}
\newcommand{\Nop}{{\mathrm N}}
\newcommand{\dd}{{\mathrm d}}
\newcommand{\transpose}{\textnormal{T}}
\newcommand{\e}{{\mathrm e}}
\newcommand\SJ[1]{{\color{black}{#1}}} 
\title{Characterising exchange of stability in scalar reaction-diffusion equations via geometric blow-up}
\author{S.~Jelbart\thanks{Vienna University of Technology, Wiedner Hauptstrasse 8, 1040 Vienna, Austria.}, C.~Kuehn\thanks{Technical University of Munich, Boltzmannstrasse 3, 85748 Garching, Germany.} \ \& A.~Martínez Sánchez\thanks{University of Barcelona, Gran Via de les Corts Catalanes 585, 08007 Barcelona, Spain. Corresponding author. Email: amartinezsanchez@ub.edu}}
\date{}
\begin{document}
	
	\maketitle
	
	\begin{abstract}
		We study the exchange of stability in scalar reaction-diffusion equations which feature a slow passage through transcritical and pitchfork type singularities in the reaction term, using a novel adaptation of the geometric blow-up method. Our results are consistent with known results on bounded spatial domains which were obtained by Butuzov, Nefedov \& Schneider using comparison principles like upper and lower solutions in \cite{Butuzov2000}, however, from a methodological point of view, the approach is motivated by the analysis of closely related ODE problems using geometric blow-up presented by Krupa \& Szmolyan in \cite{Krupa2001c}. After applying the blow-up transformation, we obtain a system of PDEs which can be studied in local coordinate charts. Importantly, the blow-up procedure resolves a spectral degeneracy in which continuous spectrum along the entire negative real axis is `pushed back' so as to create a spectral gap in the linearisation about particular steady states which arise within the so-called entry and exit charts. This makes it possible to extend slow-type invariant manifolds into and out of a neighbourhood of the singular point using center manifold theory, in a manner which is conceptually analogous to the established approach in the ODE setting. We expect that the approach can be adapted and applied to the study of dynamic bifurcations in PDEs in a wide variety of different contexts.
	\end{abstract}
	
	\bigskip
	
	\noindent {\small \textbf{Keywords:} Geometric blow-up, Reaction-diffusion equations, Dynamic bifurcation, Slow passage, Exchange of stability}
	
	\noindent {\small \textbf{MSC2020:} 35B25, 35B32, 35B40, 35K57, 37L10}
	

\section{Introduction}\label{sec:Introduction}

Due to the pioneering and seminal works of \cite{Dumortier1996,Hayes2016,Krupa2001a,Krupa2001c,Krupa2001b,Szmolyan2001,Szmolyan2004,Wechselberger2012}, the \textit{geometric blow-up method} has been established as a powerful approach to the study of the local geometry and dynamics associated with singularities of fast-slow systems of ODEs; see also \cite{Jardon2019b} for a review and \cite{deMaesschalck2021} for a recent and extensive treatment in the case of planar systems.

The primary aim of this article is to show that the geometric blow-up method can also be applied to the study of ``slow passage'' or ``dynamic bifurcation'' phenomena in scalar reaction-diffusion equations of the form
\begin{equation}
	\label{eq:main_general}
	\begin{split}
		\partial_t u &= \partial_x^2 u + f(u,\mu,\eps) , \\
		\dot \mu &= \eps ,
	\end{split}
\end{equation}
where $t > 0$, $x \in \R$, $0 < \eps \ll 1$, $u = u(x,t) \in \R$ is the unknown, $\mu = \mu(t) = \mu(0) + \eps t \in \R$ is a dynamic bifurcation parameter/slow variable, and the overdot denotes differentiation with respect to $t$. We shall consider two different possibilities for the reaction dynamics, i.e.~for the function $f$, which correspond to cases in which the reaction dynamics when $\eps = 0$ feature either a transcritical or pitchfork bifurcation at $u = \mu = 0$. The corresponding slow passage problem is to understand the fate of solutions with initial conditions satisfying $\mu(0) < 0$ as they drift through a neighbourhood of the (static) bifurcation point in time, according to $\dot \mu = \eps$, when $0 < \eps \ll 1$.

The particular slow passage problems outlined above have been rigorously analysed in the case of a bounded spatial domain in \cite{Butuzov2000}. In both the transcritical and pitchfork cases, the authors were able to prove, under suitable genericity conditions, that the solutions of interest undergo an exchange of stability phenomenon. In particular, it was shown that these solutions are exponentially close to a branch of spatially homogeneous steady states which is determined by the reaction dynamics when they emerge from a neighbourhood of the singularity with a small but fixed value of $\mu > 0$. They provided a succinct and elegant proof of this fact using a combination of singular perturbation arguments and -- crucially -- suitable upper and lower solutions. Indeed, approaches based on upper and lower solutions have been applied in order to derive rigorous statements on exchange of stability, delayed stability loss and ``canard phenomena'' in a range of slow passage problems in the general form \eqref{eq:main_general}; we refer to \cite{Butuzov2002b,Butuzov1999,Butuzov2001,Butuzov2002c,Maesschalck2009}. More generally, we refer to \cite{Asch2024,Goh2024,Goh2023,Goh2022,Kaper2018} for recent work on slow passage phenomena in PDEs, and to \cite{Avitabile2020} for a rigorous geometric approach which applies to problems which exhibit a spectral gap property at the singularity.

While arguments based on the identification of suitable upper and lower solutions can lead to elegant, succinct and rigorous results (as is the case in many of the works mentioned above), there are well-known methodological disadvantages to such approaches. In particular, comparison methods of this kind tend to become untenable in multi-component systems, and even for scalar equations, there is no systematic way to identify the `correct' upper and lower solutions. In the ODE setting, the geometric blow-up method offers an established and far more constructive alternative, which can be adapted and applied in a wide variety of different contexts. In the particular case of system \eqref{eq:main_general} with either transcritical- or pitchfork-type reaction kinetics, the spatially homogenous solutions of system \eqref{eq:main_general} are described by a planar, fast-slow system of ODEs, and the local geometry and dynamics associated with this problem \SJ{have} been described in detail using geometric blow-up techniques in \cite{Krupa2001c}. \SJ{Although we shall primarily be interested in approaches based on geometric blow-up of the kind presented in \cite{Krupa2001c} in this work, it is worthy to note that local asymptotic analyses of the dynamic transcritical and pitchfork bifurcations in the ODE setting can be traced back to the work of \cite{Haberman1979} at least; we refer also to \cite{Benoit1991} and the many references therein for approaches and advances in the study of dynamic bifurcations in ODEs more generally, which predate the aforementioned works that are based on geometric blow-up techniques.} 

In this article, we provide a rigorous and geometric description of exchange of stability phenomena which mirror the results for bounded domains in \cite{Butuzov2000} (except now on an unbounded domain), using a geometric blow-up based methodology which mirrors the proof of the corresponding results on the ODE variant of the problem in \cite{Krupa2001c}. We begin by deriving suitable local normal forms for both the transcritical and pitchfork problems\SJ{, which we then proceed to study in detail. The} main technical challenge stems from the fact that the geometric blow-up method is still being developed for PDEs. Our approach is based on recent work in \cite{Jelbart2022,Jelbart2023}, where the authors developed and applied geometric blow-up techniques in the context of scalar slow passage problems which are posed on an unbounded domain. On this approach, the spatial variable $x$ is incorporated in the blow-up itself, via a state-dependent transformation which allows for the characteristic spatial scale to vary in time as solutions traverse a neighbourhood of the singularity. For an alternative approach which has been applied to spatially discretized problems on bounded domains which feature more complicated slow dynamics, we refer to \cite{Engel2021,Engel2024,Engel2020,Zacharis2023}.

Crucially, the blow-up procedure allows us to resolve a spectral degeneracy which occurs in the static system \eqref{eq:main_general}$|_{\eps = 0}$ when $\mu = 0$, where the (purely continuous) essential spectrum associated with the linearisation about \SJ{the trivial} steady state \SJ{$u = 0$} lies along the entire negative real axis of the complex plane. Following geometric blow-up and desingularization, the linearised equations in the phase-directional (entry/exit) charts exhibit a spectral gap property, whereby the essential spectrum is pushed into the left-half plane, and two point eigenvalues appear at the origin. We interpret this as a direct generalisation of the key mathematical advantage of the blow-up method in the ODE setting, namely, that it can be used to resolve the loss of hyperbolicity at a degenerate fixed point by mapping the problem to a blown-up space in which the dynamics are governed by a desingularised vector field that it is either hyperbolic or partially hyperbolic when evaluated at its singularities. 

The spectral gap property allows us to analyse the dynamics in the phase-directional charts using center manifold theory. This is directly analogous to the approach taken in \cite{Krupa2001c}, which is typical in problems of this kind, except that we work with PDEs and therefore require a variant of center manifold theory which applies in Banach spaces. The variant that we use is based on \cite{Vanderbauwhede1992}, in the well-known formulation that has been presented and extended in \cite{Haragus2010}. Using this approach, we identify a number of $2$-dimensional local center manifolds which are comprised entirely of spatially homogeneous solutions. Indeed, these center manifolds can be identified with the center manifolds which appear in the phase-directional charts of the ODE analysis in \cite{Krupa2001c} (where they can be viewed as extensions of Fenichel slow manifolds), except that in our case, they are embedded and strongly attracting in a suitable \SJ{(infinite-dimensional)} Banach space. These observations imply that the extension of these manifolds into the so-called family rescaling chart, where the dynamics in an $\eps$-dependent neighbourhood of the origin are described, is determined by the ODE analysis in \cite{Krupa2001c}. This allows us to connect the extended center manifolds, however, it must still be shown that the PDE solutions of interest actually `track' this connection. In order to do so, we show that a priori estimates on the size of the separation between solutions and the connecting extended center manifold can be controlled using a direct but relatively constructive approach which is conceptually similar to the established approach to obtaining finite time approximation results for the validity of modulation equations as in, e.g.~\cite{Collet1990,Kirrmann1992,Kuehn2019,Schneider2017}.

More generally, the real power of the approach developed herein is that the methodology has a clear parallel to the established approach to the local analysis of dynamic or slow-fast singularities in ODE systems using geometric blow-up. Based on this fact, we are hopeful that the methods developed herein can -- as they do in the ODE setting -- be transferred and adapted in order to obtain rigorous and geometrically informative information about the dynamics close to a wide variety of dynamic bifurcations occurring in a wide variety of PDEs.

The paper is organised as follows: In Section \ref{sec:Classical_bifurcation_in_the_singular_limit} we introduce the problem, derive the local normal forms with which we work in later sections, and identify the bifurcation in the static singular limit problem \eqref{eq:main_general}$|_{\eps = 0}$. In Section \ref{sec:main_results} we state and describe our main results, which we then prove using geometric blow-up in Section \ref{sec:proofs}. Finally in Section \ref{sec:summary_and_outlook}, we summarise and discuss our findings. The Appendix contains the list of hypotheses from \cite{Haragus2010} which need to be verified in order to use the center manifold theorem which is crucial for our analysis, as well as some functional-analytic properties of the spaces that we use for analysis and some direct estimates that are useful for the proofs.

\section{Setup and the singular limit}\label{sec:Classical_bifurcation_in_the_singular_limit}

As noted in Section \ref{sec:Introduction}, we consider slow passage problems in scalar reaction-diffusion systems of the form \eqref{eq:main_general}, for which the reaction function $f$ is of local transcritical or pitchfork type. More precisely, we write \SJ{$\mathcal Q := (0,0,0)$} and assume that $f$ satisfies the conditions in either (T) and (P) below:
\begin{enumerate}
	\item[(T)] The reaction term $f$ has a \textit{transcritical singularity} at the origin, i.e.~
	\[
	f(\mathcal Q) = 0, \qquad 
	\partial_u f(\mathcal Q) = 0, \qquad 
	\partial_\mu f(\mathcal Q) = 0,
	\]
	and
	\[
	\det
	\begin{pmatrix}
		\partial_{u}^2 f (\mathcal Q) & 
		\partial_{u \mu}^2 f (\mathcal Q) \\
		\partial_{\mu u}^2 f (\mathcal Q) &
		\partial_{\mu}^2 f (\mathcal Q)
	\end{pmatrix} < 0,  \qquad
	\partial_{u}^2 f (\mathcal Q) \neq 0 ,
	\]
	as in Figure \ref{fig:critical_manifolds}(a).
	\item[(P)] The reaction term $f$ has a \textit{pitchfork singularity} at the origin, i.e.~
	\[
	f(\mathcal Q) = 0, \qquad
	\partial_u f (\mathcal Q) = 0, \qquad
	\partial_\mu f (\mathcal Q) = 0, \qquad
	\partial_u^2 f (\mathcal Q) = 0, 
	\]
	and
	\[
	\partial_u^3 f (\mathcal Q) \neq 0, \qquad
	\partial_{u \mu}^2 (\mathcal Q) \neq 0,
	\]
	as in Figure \ref{fig:critical_manifolds}(b).
\end{enumerate}

\begin{remark}
	The defining conditions in (T) and (P) are the same as those appearing in \cite{Krupa2001c}, except that the `slow regularity condition' which ensures a slow passage through the singularity is missing. This condition is automatically satisfied in our case due to the simple form of the slow equation $\dot \mu = \eps$. 
\end{remark}

\begin{figure}
	\centering
	\subfigure[Transcritical case.]{
		\includegraphics[width=0.4\textwidth]{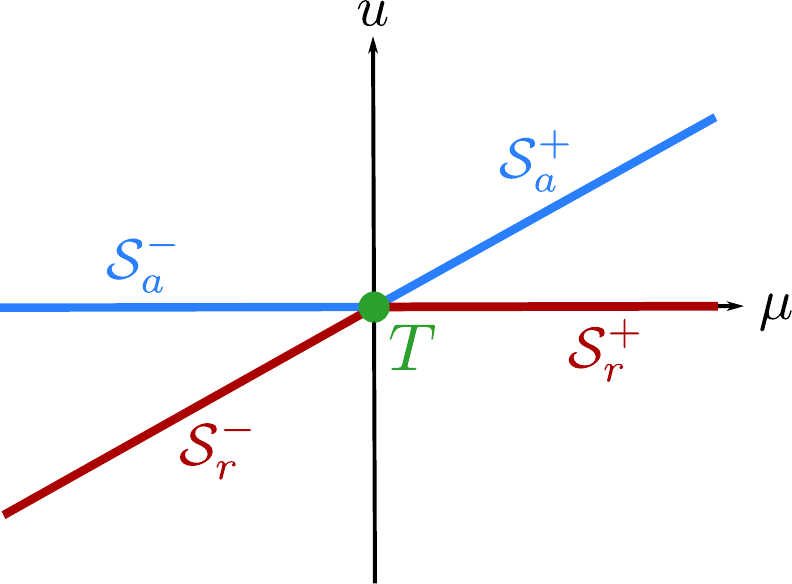}} \qquad
	\subfigure[Pitchfork case.]{
		\includegraphics[width=0.4\textwidth]{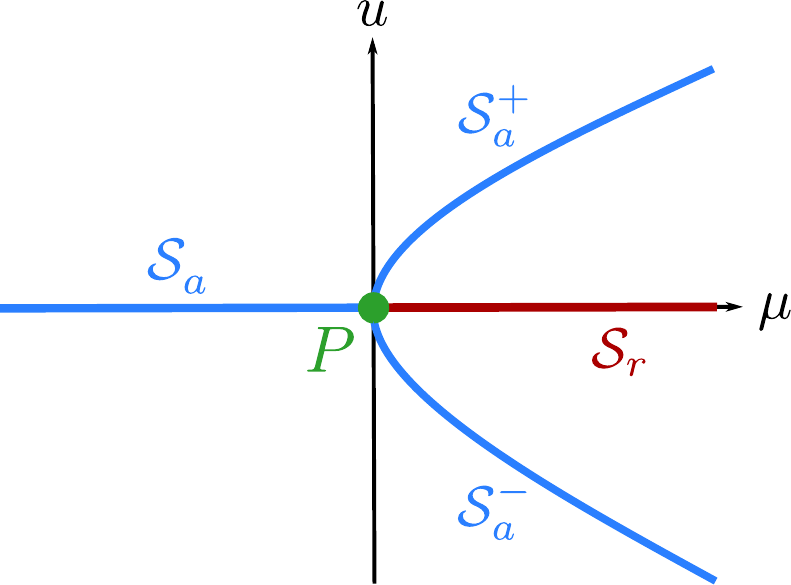}}
		\caption{Local normal form geometry of the zero sets obtained by solving $f(u,\mu,0) = 0$ in (a) the transcritical case (T), and (b) the pitchfork case (P). Considered as critical manifolds of the planar system of fast-slow ODEs in \eqref{eq:ODEs}, the branches $\mathcal S_a$ and $\mathcal S_a^\pm$ ($\mathcal S_r$ and $\mathcal S_r^\pm$) shown in blue (red) are normally hyperbolic and attracting (repelling), while the origin (green disk) is non-normally hyperbolic and denoted by $T$ and $P$ in the transcritcal and pitchfork cases shown in (a) and (b) respectively.}
		\label{fig:critical_manifolds}
\end{figure}

For analytical purposes, it is preferable to work with suitable local normal forms. In order to simplify the notation appearing in the derivation of these systems, we write
\begin{equation}
	\label{eq:coefficients_transcritical}
		\alpha = \frac{1}{2} 
		\partial_u^2 f (\mathcal Q) , \qquad 
		\beta = \frac{1}{2} 
		\partial_{u \mu}^2 f (\mathcal Q) , \qquad
		\gamma = \frac{1}{2} 
		\partial_\mu^2 f (\mathcal Q) , \qquad 
		\delta = 
		\partial_\eps f (\mathcal Q) , 
\end{equation}
and
\begin{equation}
	\label{eq:coefficients_pitchfork}
		\tilde \alpha = 
		\partial_{u \mu}^2 f (\mathcal Q) , \qquad 
		\tilde \beta = \frac{1}{2} 
		\partial_\mu^2 f (\mathcal Q) , \qquad
		\tilde \gamma = \frac{1}{2} 
		\partial_u^3 f(\mathcal Q) , \qquad 
		\tilde \delta = 
		\partial_\eps f (\mathcal Q) ,
\end{equation}
c.f.~the notation introduced just prior to the local normal form lemmas appearing in \cite{Krupa2001c}. We obtain the following result.

\begin{lemma}
	\label{lem:normal_forms}
	The following assertions hold for system \eqref{eq:main_general}:
	\begin{enumerate}
		\item Assume that $f$ satisfies the transcritical conditions in (T), with $\alpha > 0$ and $\mathcal D := \sqrt{\beta^2 - \alpha \gamma} > 0$. Then there is a change of coordinates which transforms system \eqref{eq:main_general} into
		\begin{equation}
			\label{eq:main_transcritical}
			\begin{split}
				\partial_t u &= \partial_x^2 u + \mu u - u^2 + \eps \lambda + \mathcal R(u, \mu, \eps) , \\
				\dot \mu &= \eps ,
			\end{split}
		\end{equation}
		where
		\begin{equation}
			\label{eq:lambda_transcritical}
			\lambda = \frac{(\alpha \delta + \beta - \mathcal D)}{2 \alpha \mathcal D} 
		\end{equation}
		and $\mathcal R(u, \mu, \eps) = \mathcal O(u^3, u^2 \mu, u \mu^2, u \eps, \mu \eps, \eps^2)$.
		\item Assume that $f$ satisfies the pitchfork conditions in (P), with $\tilde \alpha > 0$ and $\tilde \gamma < 0$. Then there is a change of coordinates which transforms system \eqref{eq:main_general} into
		\begin{equation}
			\label{eq:main_pitchfork}
			\begin{split}
				\partial_t u &= \partial_x^2 u + \mu u - u^3 + \eps \lambda + \mathcal R(u, \mu, \eps) , \\
				\dot \mu &= \eps ,
			\end{split}
		\end{equation}
		where
		\begin{equation}
			\label{eq:lambda_pitchfork}
			\lambda = \frac{(\tilde \alpha \tilde \delta - \tilde \beta) \sqrt{- \tilde \gamma}}{\tilde \alpha^2} 
		\end{equation}
		and $\mathcal R(u, \mu, \eps) = \mathcal O(u^4, u^2 \mu, u \mu^2, u \eps, \mu \eps, \eps^2)$.
	\end{enumerate}
\end{lemma}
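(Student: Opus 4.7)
The lemma is algebraic, so the plan is to normalise the Taylor expansion of $f$ about $\mathcal Q = (0,0,0)$ via a carefully chosen sequence of smooth, near-identity changes of coordinates. A useful preliminary observation is that for any smooth function $h$ of $\mu$ alone, the substitution $u = \tilde u + h(\mu)$ leaves $\partial_x^2 u = \partial_x^2 \tilde u$ intact while replacing $\partial_t u$ with $\partial_t \tilde u + h'(\mu)\eps$; such shifts can therefore be used freely to normalise the reaction term without disturbing either the Laplacian or the slow equation $\dot \mu = \eps$, at the cost of a modification to the $\eps$-linear coefficient of the reaction.

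For the transcritical case (T), I would begin from the Taylor expansion
\begin{equation*}
    f(u,\mu,\eps) = \delta \eps + \alpha u^2 + 2\beta u \mu + \gamma \mu^2 + \mathcal O(u^3, u^2 \mu, u \mu^2, \mu^3, u \eps, \mu \eps, \eps^2),
\end{equation*}
which follows directly from the vanishing conditions in (T) and the definitions \eqref{eq:coefficients_transcritical}. The hypothesis $\mathcal D^2 = \beta^2 - \alpha \gamma > 0$ then guarantees that a linear shift $u = \tilde u + p\mu$, with $p$ chosen as a real root of $\alpha p^2 + 2\beta p + \gamma = 0$, eliminates the pure $\mu^2$ term and produces a $\tilde u \mu$ coefficient of $\pm 2\mathcal D$. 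Iterating with higher-order shifts $u \mapsto \tilde u + \sum_{k \geq 2} p_k \mu^k$ kills the remaining pure-$\mu$ polynomial contributions, since these are not among the monomials in $\mathcal R$. A concluding linear rescaling $\tilde u = A v$, $\mu = B w$, $\eps = C e$, $(t,x) = (\tau T, \sqrt{\tau}\, X)$ with constants chosen so that the coefficients of $\partial_X^2 v$, $v w$ and $v^2$ become $+1$, $+1$ and $-1$ respectively and the slow equation remains $\dot w = e$ completes the reduction; the explicit formula \eqref{eq:lambda_transcritical} for $\lambda$ is then read off by bookkeeping of the $\eps$-coefficient through every step.

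The pitchfork case (P) proceeds analogously, starting from
\begin{equation*}
    f(u,\mu,\eps) = \tilde \delta \eps + \tilde \alpha u \mu + \tilde \beta \mu^2 + \tfrac{\tilde \gamma}{3} u^3 + \text{h.o.t.},
\end{equation*}
where the $u^2$ coefficient is absent by (P). Because the quadratic part is now degenerate in $u$, removal of the $\mu^2$ term is achieved by the simpler shift $u = \tilde u - (\tilde \beta / \tilde \alpha)\mu$ (admissible since $\tilde \alpha \neq 0$), with further higher-order shifts disposing of the residual $\mu^k$ contributions for $k \geq 3$. A rescaling in which the coefficient of $u^3$ is normalised to $-1$ (using $\tilde \gamma < 0$) and that of $u\mu$ is normalised to $+1$ (using $\tilde \alpha > 0$) then yields the normal form \eqref{eq:main_pitchfork}, and tracking the $\eps$-coefficient through the transformation produces \eqref{eq:lambda_pitchfork}.

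There is no substantive analytic obstacle here: all shifts are near-identity in $u$ and hence local diffeomorphisms, and all rescalings are linear and invertible; the structural requirements that $\dot \mu = \eps$ and the Laplacian coefficient be preserved are built into the choice of rescaling constants. The main difficulty is purely arithmetic, namely, a consistent choice of root of the quadratic in (T) together with the signs of the rescaling constants must be tracked carefully to recover the precise form of $\lambda$ stated in \eqref{eq:lambda_transcritical}--\eqref{eq:lambda_pitchfork}, which is where sign errors would most likely arise in a careful write-up.
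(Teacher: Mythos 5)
Your overall route is the same as the paper's: Taylor-expand $f$ at $\mathcal Q$, shift $u$ by a function of $\mu$ to normalise the quadratic (resp.\ quadratic-plus-cubic) structure, rescale to fix the coefficients of $\mu u$ and $u^s$, and read off $\lambda$ by tracking the $\eps$-coefficient (your preliminary observation that a shift $u = \tilde u + h(\mu)$ costs only an extra $h'(\mu)\eps$ on the time-derivative side is exactly the mechanism through which $\lambda$ acquires its $\beta,\mathcal D$- resp.\ $\tilde\beta$-dependence). The extra time/space rescaling $(t,x)=(\tau T,\sqrt{\tau}X)$ you allow is redundant but harmless: the compatibility constraints force the $\eps$-scaling to track the $\mu$-scaling, and $\lambda$ comes out independent of $\tau$; the paper simply takes $\tau=1$ and rescales only $u$, $\mu$, $\eps$.

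The genuine gap is in how you remove the pure-$\mu$ terms. The claimed remainder $\mathcal R(u,\mu,\eps)=\mathcal O(u^3,u^2\mu,u\mu^2,u\eps,\mu\eps,\eps^2)$ forces $\mathcal R(0,\mu,0)\equiv 0$, i.e.\ $u=0$ must be an \emph{exact} branch of steady states of the $\eps=0$ problem; this is not cosmetic, since the later chart analysis relies on it (e.g.\ $\mathcal P_1=\{(0,r_1,0)\}$ being a curve of steady states, and $\mathcal R_1(u_1,r_1,\eps_1)=r_1^{-4}\mathcal R(r_1u_1,-r_1^2,r_1^4\eps_1)$ being $\mathcal O(u_1^2,r_1u_1,r_1^2\eps_1)$, would both fail if a pure $\mu^N$ term survived). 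Your scheme of ``iterating with higher-order shifts $u\mapsto \tilde u+\sum_{k\ge2}p_k\mu^k$'' only kills pure-$\mu$ terms up to any finite order, and the infinite iteration is merely formal for $\mathcal C^k$ (or non-analytic $\mathcal C^\infty$) reaction terms, so as written it does not produce the stated $\mathcal R$. The paper instead rectifies the whole branch in one step: it shifts by $v^\ast(\mu)-\kappa\mu$, where $v^\ast$ is the smooth function solving $f(v^\ast(\mu)-\kappa\mu,\mu,0)\equiv 0$ obtained from the Implicit Function Theorem. Note that since $\partial_u f(\mathcal Q)=0$ the IFT cannot be applied to $f=0$ directly at the origin; the standard remedy (implicit in the paper) is to apply it to the desingularised equation $F(z,\mu):=f(\mu z,\mu,0)/\mu^2=0$, whose $z$-derivative at the chosen root is $\pm 2\mathcal D\neq 0$ in case (T) and $\tilde\alpha\neq 0$ in case (P) — exactly the nondegeneracy hypotheses of the lemma. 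Replacing your iterated shifts by this single IFT-based rectification (and then carrying out your root/sign bookkeeping to obtain \eqref{eq:lambda_transcritical} and \eqref{eq:lambda_pitchfork}) closes the gap and recovers the paper's proof.
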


\begin{proof}
	The proof is similar to the proof of to Lemma's 2.1 and 4.1 in \cite{Krupa2001c}, except that we perform an additional rotation in the transcritical case, and (in both cases) an additional near-identity transformation which rectifies one of the branches of steady states along $u = 0$.
	
	Explicitly, system \eqref{eq:main_transcritical} is obtained via the coordinate transformation and parameter rescaling
	\[
	u = - \frac{\tilde u}{\alpha} + v^\ast \left( \frac{\tilde \mu}{2 \mathcal D} \right) - \frac{\kappa}{2 \mathcal D} \tilde \mu, \qquad 
	\mu = \frac{\tilde \mu}{2 \mathcal D}, \qquad
	\eps = \frac{\tilde \eps}{2 \mathcal D} ,
	\]
	where $\kappa = (\beta - \mathcal D) / \alpha$ and $v^\ast(\tilde \mu / \alpha) = \mathcal O(\tilde \mu^2)$ is the unique function satisfying $f(v^\ast(\mu) - \kappa \mu, \mu, 0 ) \equiv 0$ for all $\mu \in \mathcal I$ (which is guaranteed to exist via the Implicit Function Theorem), where $\mathcal I$ is a neighbourhood of $0$ in $\R$.
	
	Assertion 2 can be obtained using
	\[
	u = \frac{\tilde u}{\sqrt{- \gamma}} + v^\ast \left( \frac{\tilde \mu}{\alpha} \right) - \frac{\beta}{\alpha^2} \tilde \mu, \qquad 
	\mu = \frac{\tilde \mu}{\alpha}, \qquad
	\eps = \frac{\tilde \eps}{\alpha} ,
	\]
	where $v^\ast(\tilde \mu / \alpha) = \mathcal O(\tilde \mu^2)$ is the unique function satisfying $f(v^\ast(\mu) - (\beta / \alpha) \mu, \mu, 0 ) \equiv 0$ for all $\mu \in \mathcal I$ (which is guaranteed to exist via the Implicit Function Theorem), where $\mathcal I$ is again a neighbourhood of $0$ in $\R$.
\end{proof}

From this point on we shall work directly with the local normal forms \eqref{eq:main_transcritical} and \eqref{eq:main_pitchfork} in Lemma \ref{lem:normal_forms}. 
%
%
In fact, we will often consider both local normal forms simultaneously. For this purpose, it is convenient to introduce the system
\begin{equation}
	\label{eq:main_system}
	\begin{split}
		\partial_t u &= \partial_x^2 u + \mu u - u^s + \eps \lambda + \mathcal R^{(s)}(u, \mu, \eps) , \\
		\dot \mu &= \eps ,
	\end{split}
\end{equation}
where $s \in \{2, 3\}$, and refer to the case with $s = 2$ ($s = 3$) as the transcritical (pitchfork) case; see Figure \ref{fig:critical_manifolds}. In particular, $\mathcal R^{(2)}$ denotes the higher order terms in system \eqref{eq:main_transcritical}, whereas $\mathcal R^{(3)}$ denotes the higher order terms in system \eqref{eq:main_pitchfork}.

\begin{remark}
	In later sections, we shall reuse some of the notation introduced above for different purposes. In particular, $\alpha, \beta, \gamma$ and $\delta$ will be used to denote new constants (the constants appearing in \eqref{eq:coefficients_transcritical} will not be used again). Note also that the constant $\lambda$ and the higher order terms $\mathcal R$ (we shall often write $\mathcal R$ instead of $\mathcal R^{(s)}$) are different in the transcritical and pitchfork cases; the relevant definition will be clear from the context.
\end{remark}

The limiting problem as $\eps \to 0$ is given by
\begin{equation}
	\label{eq:static_subsystem}
	\partial_t u = \partial_x^2 u + \mu u - u^s + \mathcal R^{(s)} (u,\mu,0) , 
\end{equation}
where
\[
\mathcal R^{(s)} (u,\mu,0) = 
\begin{cases}
	\mathcal O (u^3, u^2 \mu, u \mu^2) , & s = 2, \\
	\mathcal O (u^4, u^2 \mu, u \mu^2) , & s = 3 ,
\end{cases}
\]
and we have chosen to suppress the second equation $\dot \mu = 0$ and view the slow variable $\mu$ as a bifurcation parameter. In both cases, $u = 0$ is a spatially homogenous steady state for all $\mu \in \mathcal I$, where $\mathcal I$ is a neighbourhood about $0$ in $\R$ on which both of the normal form transformations in Lemma \ref{lem:normal_forms} are valid. 
Linearising equation \eqref{eq:static_subsystem} about $u = 0$, we obtain
\begin{equation}
	\label{eq:L}
	\partial_t u = \Lop u , \qquad \Lop := \partial_x^2 + \mu ,
\end{equation}
irrespective of the value of $s$. It is straightforward to show that an instability occurs as the parameter $\mu$ increases over zero.

In order to make this more precise, we need to fix a choice of phase space. To this end, let $k \in \mathbb N$ and denote the space of $k$-differentiable, bounded and uniformly continuous functions on $\R$ by $\mathcal C_{\textup{b,unif}}^k(\R)$, which is a Banach space when endowed with the standard norm
\[
\| v \|_{\mathcal C_{\textup{b,unif}}^k(\R)} = \sum_{i=0}^k \| v^{(i)} \|_\infty ,
\]
where $v^{(0)} = v$, $v^{(i)}$ with $i \geq 1$ denotes the $i$-th derivative of $v$, and $\| \cdot \|_\infty$ is the usual supremum norm (notice in particular that $\| \cdot \|_{\mathcal C^0_{\textup{b,unif}}(\R)} = \| \cdot \|_\infty$). Due to technical difficulties as $|x| \to \infty$, we shall work in spatially polynomially weighted spaces of the form
\begin{equation}
	\label{eq:tilde_Z}
	\widetilde{\mathcal Z} := \left\{ v \in \mathcal C^2_{\textup{b,unif}}(\R) : \| v \|_{\widetilde{\mathcal Z}} < \infty \right\} ,
\end{equation}
where
\[
\| v(\cdot) \|_{\widetilde{\mathcal Z}} = \| v(\cdot) \|_\infty + \| (1 + | \cdot |) v^{(1)} (\cdot) \|_\infty + \| (1 + (\cdot)^2) v^{(2)} (\cdot) \|_\infty .
\]
Thus, we are interested in the behaviour of solutions with initial conditions for which the first (second) derivative decays at least as quickly as $1 / |x|$ ($1 / x^2$) as $|x| \to \infty$. In particular, the limits of functions in $\widetilde{\mathcal Z}$ as $|x| \to \infty$ are constant and bounded, but they need not decay to zero. Viewing $\Lop$ as an operator on $\widetilde{\mathcal Z}$, 
we have that $\Lop \in \mathcal L(\widetilde{\mathcal Z}, \widetilde{\mathcal X})$, where $\mathcal L(X, Y)$ denotes the space of \SJ{bounded/continuous linear operators $\textup{A} : X \to Y$}, and
\begin{equation}
	\label{eq:tilde_X}
	\widetilde{\mathcal X} := \left\{ v \in \mathcal C^0_{\textup{b,unif}}(\R) : \lim_{x \to \pm \infty} v(x) = v_{\pm\infty} \in \R \right\}
\end{equation}
is the space of bounded, uniformly continuous functions with constant limits $v_{\pm \infty}$ as $x \to \pm \infty$ (equipped with the sup norm $\| \cdot \|_\infty$).


\begin{remark}
	\label{rem:density}
	The weighted space $\widetilde{\mathcal Z}$ is a Banach space when endowed with the norm $\| \cdot \|_{\widetilde{\mathcal Z}}$ defined above, $\widetilde{\mathcal X}$ is a Banach space when endowed with the supremum norm, and 
	$\Lop$ is a densely defined operator from $\mathcal D(\Lop) = \widetilde{\mathcal Z} \subseteq \widetilde{\mathcal X}$ to $\widetilde{\mathcal X}$. We refer to Appendix \ref{app:wieghted_spaces} for this and other basic functional-analytic properties of the spaces.
\end{remark}

Direct calculations lead to the following expression for the spectrum:
\[
\sigma(\Lop) := \textup{spec} (\Lop) = (\infty,\mu] .
\]
We omit these calculations here for brevity, but refer to the proof of Lemma \ref{lem:spectrum_K1} below, where we present more details in deriving the spectrum of the (closely related) linear operator $\Lop_1$ defined in \eqref{eq:L1}. It follows that $u^\ast(t) \equiv 0$ is spectrally stable when $\mu < 0$, 
and unstable when $\mu > 0$; see Figure \ref{fig:spectrum_static}. 
Our aim in the following is to understand the fate of solutions of system \eqref{eq:main_system} with $0 < \eps \ll 1$ and initial conditions that are sufficiently small in $\widetilde{\mathcal Z}$ with $\mu(0) < 0$, as they drift through a neighbourhood of the corresponding instability slowly in time.

\begin{figure}[t!]
	\centering
	\subfigure[$\mu < 0$.]{
		\includegraphics[width=0.3\textwidth]{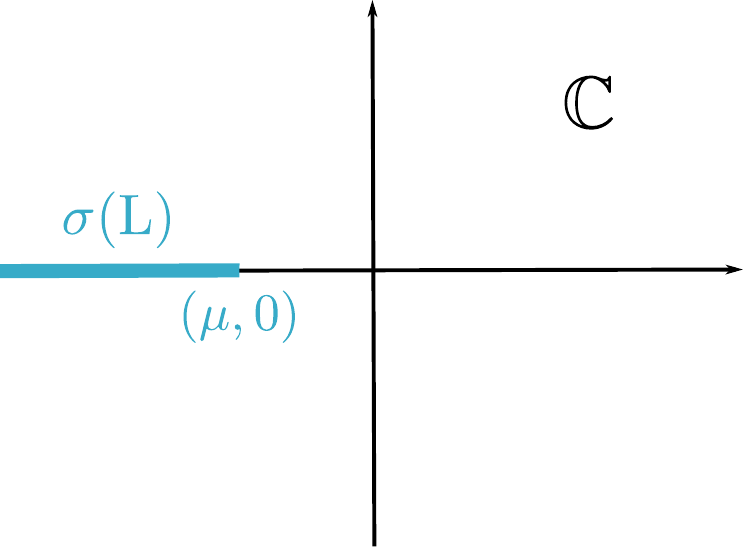}} \quad
	\subfigure[$\mu = 0$.]{
		\includegraphics[width=0.3\textwidth]{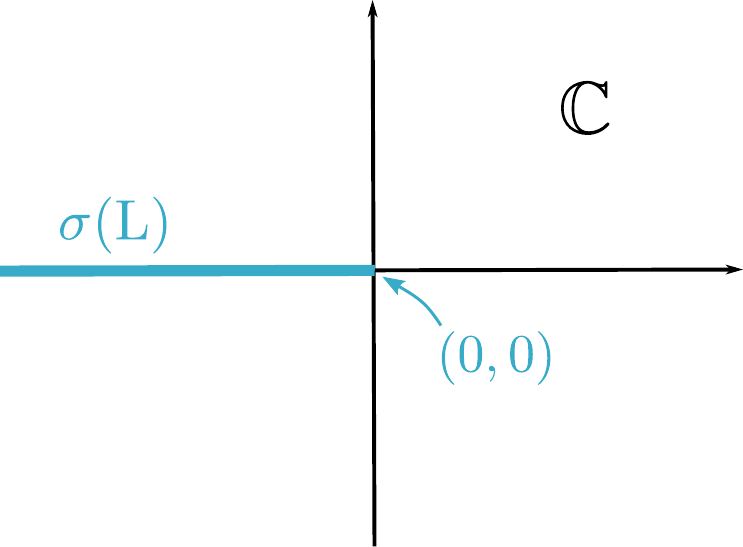}} \quad
	\subfigure[$\mu > 0$.]{
			\includegraphics[width=0.3\textwidth]{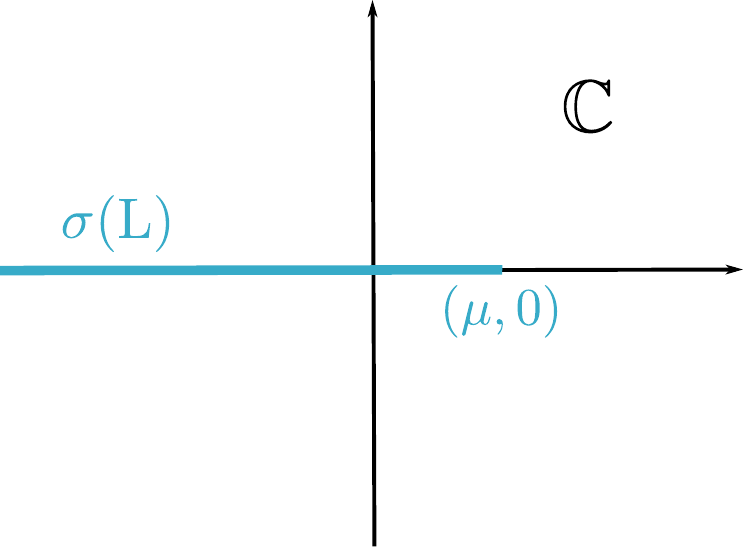}}
	\caption{The spectrum $\sigma(\Lop)$ associated with the linear operator in \eqref{eq:L} for (a) $\mu < 0$, (b) $\mu = 0$ and (c) $\mu > 0$. An instability arises when $\sigma(\Lop)$, which is continuous and constrained to the real axis, intersects the right half plane for $\mu > 0$.}
	\label{fig:spectrum_static}
\end{figure}

\section{Main results}
\label{sec:main_results}

In this section we state and describe our main results. In both transcritical and pitchfork cases, we are interested in the forward evolution of solutions to \eqref{eq:main_system} with initial conditions in
\begin{equation}
	\label{eq:IC_set}
	\Delta^{\textup{in}} := \left\{ (u,-\rho) : \| u(\cdot,0) \|_{\widetilde{\mathcal Z}} \leq \chi \right\}
\end{equation}
where $\rho > 0$ and $\chi > 0$ are small but fixed constants. The primary aim is to describe solutions $u(x,t)$ as they leave an $\eps$-independent neighbourhood of $(u,\mu) = (0,0)$. Specifically, we want a geometric description of the asymptotic behaviour of solutions at time $T > 0$, where $T = 2 \eps^{-1} \rho$ satisfies $\mu(T) = \rho$.

We start by identifying slow manifolds comprised of spatially homogeneous solutions $u(x,t) = \varphi(t)$, which will play an important role in our geometric description of solutions to the PDE systems \eqref{eq:main_transcritical} and \eqref{eq:main_pitchfork}. The existence of slow manifolds follows from the fact that spatially homogenous solutions $(u, \mu) = (\varphi, \mu)(t)$ to system \eqref{eq:main_system} solve the planar slow-fast ODE system
\begin{equation}
	\label{eq:ODEs}
	\begin{split}
		\dot \varphi &= \mu \varphi - \varphi^s + \eps \lambda + \mathcal R^{(s)}(\varphi, \mu, \eps) , \\
		\dot \mu &= \eps ,
	\end{split}
\end{equation}
where, as in system \eqref{eq:main_system}, $s = 2$ ($s = 3$) corresponds to the transcritical (pitchfork) case. The local dynamics of both systems in \eqref{eq:ODEs} have been described using geometric blow-up techniques in \cite{Krupa2001c}. The following result is a direct consequence of this analysis. We let $\mathcal I$ denote the interval of $\mu$-values for which the normal forms in Lemma \ref{lem:normal_forms} are valid (we choose the smallest of the two intervals in order to simplify notation), and let $\mathcal I_- \subset (-\mu_0, 0)$ and $\mathcal I_+ \subset (0,\mu_0)$ be compact subintervals of $\mathcal I$.

\begin{lemma}
	\label{lem:slow_manifolds}
	There exists an $\eps_0 > 0$ and such that the following assertions hold for all $\eps \in (0,\eps_0)$:
	\begin{enumerate}
		\item[(i)] In the transcritical case $s = 2$, system \eqref{eq:ODEs} has two attracting slow manifolds $\mathcal S_a^\pm$ and two repelling slow manifolds $\mathcal S_r^\pm$. In particular,
		\begin{equation}
			\label{eq:slow_manifolds_transcritical}
				\mathcal S_a^- = \left\{ (\mathcal O(\eps), \mu ) : \mu \in \mathcal I_- \right\} , \qquad
				\mathcal S_a^+ = \left\{ (\phi(\mu,\eps), \mu ) : \mu \in \mathcal I_+ \right\} ,
		\end{equation}
		where $\phi$ is a $\mathcal C^k$-smooth function satisfying $\phi(\mu,\eps) = \mu + \mathcal O(\mu^2, \eps)$. 
		\item[(ii)] In the pitchfork case $s = 3$, system \eqref{eq:ODEs} has three attracting slow manifolds $\mathcal S_a$ and $\mathcal S_a^\pm$, and one repelling slow manifold $\mathcal S_r$. In particular,
		\begin{equation}
			\label{eq:slow_manifolds_pitchfork}
				\mathcal S_a = \left\{ (\mathcal O(\eps), \mu ) : \mu \in \mathcal I_- \right\} , \qquad
				\mathcal S_a^\pm = \left\{ (\phi_\pm(\mu,\eps), \mu ) : \mu \in \mathcal I_+ \right\} ,
		\end{equation}
		where the functions $\phi_\pm$ are $\mathcal C^k$-smooth and satisfy $\phi_\pm(\mu,\eps) = \pm \sqrt \mu + \mathcal O(\mu, \eps)$. 
	\end{enumerate}
\end{lemma}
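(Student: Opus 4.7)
The plan is to recognise Lemma \ref{lem:slow_manifolds} as a direct assertion about the planar slow-fast ODE system \eqref{eq:ODEs}, restricted to compact parameter intervals $\mathcal I_\pm$ which are bounded away from the singularity at $\mu = 0$. On such intervals the relevant branches of the critical manifold $\mathcal C_0 = \{(\varphi,\mu) : \mu \varphi - \varphi^s + \mathcal R^{(s)}(\varphi,\mu,0) = 0\}$ are normally hyperbolic, so the conclusions should follow from a standard application of Fenichel's geometric singular perturbation theorem, exactly as in the analogous ODE analysis in \cite{Krupa2001c}.

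First I would describe $\mathcal C_0$ explicitly in each case, exploiting the structure of the normal forms. In the transcritical case ($s = 2$), the near-identity transformation constructed in Lemma \ref{lem:normal_forms} rectifies one branch of steady states along $\varphi = 0$, so that every term in $\mathcal R^{(2)}(\varphi,\mu,0)$ is divisible by $\varphi$ and we may factor
\begin{equation*}
    \mu \varphi - \varphi^2 + \mathcal R^{(2)}(\varphi,\mu,0) = \varphi \bigl( \mu - \varphi + g(\varphi,\mu) \bigr),
\end{equation*}
with $g = \mathcal O(\varphi^2, \varphi \mu, \mu^2)$. The Implicit Function Theorem applied to the second factor yields a smooth branch $\varphi = \phi_0(\mu) = \mu + \mathcal O(\mu^2)$, in addition to the trivial branch $\varphi = 0$. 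In the pitchfork case ($s = 3$), an analogous factorisation together with the Implicit Function Theorem yields the branch $\varphi = 0$ and, for $\mu > 0$, the two branches $\varphi = \phi_{0,\pm}(\mu) = \pm \sqrt{\mu} + \mathcal O(\mu)$.

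Next I would verify normal hyperbolicity on $\mathcal I_\pm$. A direct linearisation of the reaction term in $\varphi$ gives a transverse eigenvalue equal to $\mu + \mathcal O(\mu^2)$ along $\varphi = 0$ (attracting on $\mathcal I_-$, repelling on $\mathcal I_+$), $-\mu + \mathcal O(\mu^2)$ along the nontrivial transcritical branch (of opposite sign), and $-2\mu + \mathcal O(\mu^{3/2})$ along the two nontrivial pitchfork branches (attracting on $\mathcal I_+$). Fenichel's theorem then provides locally invariant $\mathcal C^k$-smooth slow manifolds which are $\mathcal O(\eps)$-perturbations of the corresponding branches of $\mathcal C_0$ with identical stability types, and these produce the manifolds $\mathcal S_a^\pm$, $\mathcal S_r^\pm$ in the transcritical case, and $\mathcal S_a$, $\mathcal S_a^\pm$, $\mathcal S_r$ in the pitchfork case, after matching with the labelling in the statement. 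The asymptotic expressions in \eqref{eq:slow_manifolds_transcritical} and \eqref{eq:slow_manifolds_pitchfork} are then immediate: along the trivial branches the perturbation bound gives $\varphi = \mathcal O(\eps)$, while along the nontrivial branches the $\mathcal O(\eps)$ correction is added to the known $\mu$-expansions computed above.

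The main obstacle is really only the bookkeeping of assigning each perturbed Fenichel manifold to the correctly labelled slow manifold, along with verifying that no normal-hyperbolicity bound degenerates on the compact intervals $\mathcal I_\pm$ (which it does not, since each transverse eigenvalue is bounded away from $0$ on these sets). Because the problem is two-dimensional, all branches of $\mathcal C_0$ are graphs over $\mu$ on $\mathcal I_\pm$, and the normal-hyperbolicity estimates are explicit, standard Fenichel theory in the form used in \cite{Krupa2001c} provides all of the required conclusions with no further technical input.
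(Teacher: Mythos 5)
Your proposal is correct and follows essentially the same route as the paper: the paper likewise obtains existence directly from Fenichel theory (citing \cite{Fenichel1979,Jones1995,Wiggins2013} and \cite{Krupa2001c}), and your explicit factorisation of the critical manifold, the IFT branches, and the transverse-eigenvalue checks are just the standard details behind that citation. The only minor difference is in the asymptotics: the paper matches like powers in an expansion of the invariance equation about $\eps = 0$, whereas you combine the critical-manifold expansions with the $\mathcal O(\eps)$ closeness from Fenichel's theorem, which is equally sufficient for the stated orders $\phi(\mu,\eps) = \mu + \mathcal O(\mu^2,\eps)$ and $\phi_\pm(\mu,\eps) = \pm\sqrt{\mu} + \mathcal O(\mu,\eps)$.
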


\begin{proof}
	The existence of the slow manifolds in \eqref{eq:slow_manifolds_transcritical} and \eqref{eq:slow_manifolds_pitchfork} follows from Fenichel theory \cite{Fenichel1979,Jones1995,Wiggins2013}; we also refer again to \cite{Krupa2001c}, where both systems have been studied in detail. The asymptotic expressions for $\phi$ ($\phi_\pm$) can be obtained by matching like powers in the power expansions of $\dot \phi(\mu, \eps)$ ($\dot \phi_\pm(\mu, \eps)$) and the right-hand side of the equation for $\dot \varphi$ in \eqref{eq:ODEs} with $\varphi = \phi(\mu, \eps)$ ($\varphi_\pm = \phi_\pm(\mu, \eps)$) about $\eps = 0$. 
\end{proof}


The slow manifolds described in Lemma \ref{lem:slow_manifolds}, which arise in the analysis of the ODE system \eqref{eq:ODEs}, can be identified within the solution space of the PDE system \eqref{eq:main_system}. We have chosen to present it here, because the attracting slow manifolds in particular appear in the formulation of our main results for the PDE systems \eqref{eq:main_transcritical} and \eqref{eq:main_pitchfork}.

\

Our main results describe solutions of system \eqref{eq:main_general} with initial conditions in \eqref{eq:IC_set}. We begin with the transcritical case $s = 2$.

\begin{thm}
	\label{thm:transcritical}
	\textup{(Transcritical exchange of stability)}
	Consider system \eqref{eq:main_transcritical} with $\lambda > 0$. There exists an $\eps_0 > 0$, $\chi > 0$, $\rho > 0$ and $\gamma > 0$ such that for all $\eps \in (0,\eps_0)$, solutions $u(x,t)$ with initial conditions in $\Delta^{\textup{in}}$ satisfy
	\begin{equation}
		\label{eq:asymptotics_transcritical}
		u(x,T) = \phi(\rho,\eps) + \mathcal O(\e^{- \gamma \rho^2 / 2 \eps})
	\end{equation}
	as $\eps \to 0$, where $\phi$ is the function which defines the (spatially homogeneous) slow manifold $\mathcal S_a^+$ in \eqref{eq:slow_manifolds_transcritical}.
\end{thm}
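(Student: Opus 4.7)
The plan mirrors both the PDE blow-up methodology of \cite{Jelbart2022,Jelbart2023} and, on the ODE side, the local analysis of \cite{Krupa2001c}. The transcritical singularity enforces the scaling $u\sim\mu\sim\sqrt{\eps}$, so together with the parabolic balance $\partial_t\sim\partial_x^2$ I would perform a state-dependent cylindrical blow-up of $\mathcal Q=(0,0,0)$ of the form
\[
u=\bar r\,\bar u,\qquad \mu=\bar r\,\bar\mu,\qquad \eps=\bar r^{2}\,\bar\eps,\qquad x=\bar r^{-1/2}\,\bar x,
\]
with $(\bar u,\bar\mu,\bar\eps)$ on (a half of) the unit sphere and $\bar r\geq 0$, followed by a standard desingularisation of the resulting vector field. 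The analysis is then organised into three directional charts: an \emph{entry chart} $K_1$ with $\bar\mu=-1$, covering $\Delta^{\textup{in}}$; a \emph{rescaling chart} $K_2$ with $\bar\eps=1$, covering an $\eps$-dependent neighbourhood of $\mathcal Q$; and an \emph{exit chart} $K_3$ with $\bar\mu=+1$, reaching the exit face $\mu=\rho$.

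In $K_1$ the blown-up PDE has a steady state at $(\bar u,\bar r)=(0,0)$ whose linearisation is the operator $\Lop_1$ of Lemma \ref{lem:spectrum_K1}. The decisive input is the spectral-gap property: the essential spectrum is pushed strictly into the left half plane, and only two simple point eigenvalues at $0$ (associated with the blow-up directions $\bar r$ and $\bar\eps$) remain on the imaginary axis. This places us squarely in the hypotheses of the Banach-space center manifold theorem of Vanderbauwhede-Iooss in the formulation of \cite{Haragus2010}, applied on $\widetilde{\mathcal Z}$, and yields an attracting two-dimensional local center manifold $\mathcal M_1^c$ consisting entirely of spatially homogeneous solutions which coincides with the extension of $\mathcal S_a^-$ in the $K_1$-coordinates. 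An analogous construction in the exit chart $K_3$ produces an attracting center manifold $\mathcal M_3^c$ that agrees with the extension of $\mathcal S_a^+$. In the rescaling chart $K_2$, the blown-up system restricted to the spatially homogeneous subspace is precisely the desingularisation of the ODE system \eqref{eq:ODEs}, so the analysis of \cite{Krupa2001c} supplies the heteroclinic-type connection between the extensions of $\mathcal M_1^c$ and $\mathcal M_3^c$ through $K_2$, terminating at $\phi(\rho,\eps)$.

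The main technical obstacle, and the point at which the argument departs most clearly from its ODE analogue, is to show that a genuinely spatially inhomogeneous PDE solution with initial data in $\Delta^{\textup{in}}$ actually tracks this connecting chain of manifolds, and to quantify the deviation in $\widetilde{\mathcal Z}$. The plan is to write $u(x,t)=\varphi(t)+w(x,t)$, where $\varphi$ is the spatially homogeneous solution living on the concatenated center manifold, derive the resulting linear-plus-quadratic equation for the remainder $w$ in each chart, and run a Gronwall-type estimate using the chart-dependent spectral gap together with standard sectorial resolvent bounds. The dominant contribution to the exponent comes from $K_1$: the effective linear rate is $\sim\mu(t)$ and
\[
\int_{0}^{\rho/\eps}\lvert\mu(s)\rvert\,\dd s=\frac{\rho^{2}}{2\eps},
\]
yielding the factor $\e^{-\gamma\rho^{2}/2\eps}$ in \eqref{eq:asymptotics_transcritical}; the $K_2$-contribution is $\mathcal O(1)$ in $\eps$ since the blown-up time spent in $K_2$ is bounded, and $K_3$ only improves the estimate. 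Conceptually this last step is in the spirit of finite-time modulation-equation estimates as in \cite{Schneider2017,Kuehn2019}, but implemented inside the blow-up so that the nonlinearity is tame on each chart and the contraction provided by the spectral gap can be exploited directly.
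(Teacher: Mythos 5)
Your proposal follows essentially the same route as the paper: the same weighted blow-up with the spatial scale built in (the paper actually keeps $\bar u$ unconstrained, i.e.\ a cylinder rather than a sphere over $(\bar\mu,\bar\eps)\in\mathbb S^1$, but since you only work in the charts $\bar\mu=\pm1$, $\bar\eps=1$ this changes nothing), center manifold theory in the weighted space $\widetilde{\mathcal Z}$ via the spectral gap in the entry and exit charts, the \cite{Krupa2001c} ODE connection through $K_2$, a finite-time Gr\"onwall/variation-of-constants tracking estimate, and the same accounting of the exponent $\gamma\rho^2/2\eps$ through the desingularised $K_1$-time. The only minor difference is that the paper gets the exponential closeness in $K_1$ and $K_3$ directly from the attractivity statement of the center manifold theorem in \cite{Haragus2010} and reserves the hands-on remainder estimate for the rescaling chart $K_2$, whereas you propose to run the $u=\varphi+w$ Gr\"onwall argument in every chart.
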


Before discussing this result, we present our main result in the pitchfork case $s = 3$.

\begin{thm}
	\label{thm:pitchfork}
	\textup{(Pitchfork exchange of stability)}
	Consider system \eqref{eq:main_pitchfork} with $\lambda \neq 0$. There exists an $\eps_0 > 0$, $\chi > 0$, $\rho > 0$ and $\gamma > 0$ such that for all $\eps \in (0,\eps_0)$, solutions $u(x,t)$ with initial conditions in $\Delta^{\textup{in}}$ satisfy one of the following assertions:
	\begin{enumerate}
		\item[(i)] If $\lambda > 0$, then
		\begin{equation}
			\label{eq:asymptotics_pitchfork_up}
			u(x,T) = \phi_+(\rho,\eps) + \mathcal O(\e^{- \gamma \rho^2 / 2 \eps})
		\end{equation}
		as $\eps \to 0$, where $\phi_+$ is the function which defines the (spatially homogeneous) slow manifold $\mathcal S_a^+$ in \eqref{eq:slow_manifolds_pitchfork}.
		\item[(ii)] If $\lambda < 0$, then
		\begin{equation}
			\label{eq:asymptotics_pitchfork_down}
			u(x,T) = \phi_-(\rho,\eps) + \mathcal O(\e^{- \gamma \rho^2 / 2 \eps})
		\end{equation}
		as $\eps \to 0$, where $\phi_-$ is the function which defines the (spatially homogeneous) slow manifold $\mathcal S^-_a$ in \eqref{eq:slow_manifolds_pitchfork}.
	\end{enumerate}
\end{thm}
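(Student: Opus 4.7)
The strategy mirrors the one used for Theorem \ref{thm:transcritical}, adapted to the pitchfork normal form \eqref{eq:main_pitchfork}. The cubic nature of the reaction term together with $\dot\mu = \eps$ dictates the homogeneity weights $(u, \mu, \eps) \sim (r, r^2, r^3)$, so the first step is to apply a cylindrical geometric blow-up of the form
\[
u = r \bar u, \qquad \mu = r^2 \bar\mu, \qquad \eps = r^3 \bar\eps,
\]
combined with a state-dependent spatial rescaling of the kind introduced in \cite{Jelbart2022,Jelbart2023}, which is needed so that $\partial_x^2 u$ balances the leading reaction terms after blow-up. The analysis then splits into the three standard charts: an entry chart $K_1$ ($\bar\mu = -1$), a rescaling chart $K_2$ ($\bar\eps = 1$), and an exit chart $K_3$ ($\bar\mu = +1$), and I would desingularise the blown-up PDE in each chart.

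In $K_1$, the desingularised system should possess a family of spatially homogeneous steady states whose blow-down recovers the slow manifold $\mathcal S_a$ of Lemma \ref{lem:slow_manifolds}(ii). Exactly as in the transcritical case, the blow-up is expected to resolve the spectral degeneracy of $\Lop$: the essential spectrum of the linearisation on $\widetilde{\mathcal X}$ gets pushed into the open left half plane, and two point eigenvalues appear at the origin (one from the slow direction, one from the direction along the blown-up sphere). The centre manifold theorem in Banach spaces of \cite{Vanderbauwhede1992,Haragus2010}, with hypotheses verified as in the Appendix, then produces a two-dimensional, exponentially attracting invariant manifold consisting of spatially homogeneous solutions which extends up to the blown-up sphere. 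The same procedure carried out in $K_3$ yields analogous attracting centre manifolds which identify, after blow-down, with $\mathcal S_a^\pm$ in \eqref{eq:slow_manifolds_pitchfork}.

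In the rescaling chart $K_2$ the spatially homogeneous slice decouples to precisely the desingularised pitchfork ODE analysed in \cite{Krupa2001c}. From that analysis the distinguished orbit emerging from the saddle-type equilibrium at the intersection with $K_1$ traverses $K_2$ and connects to the attracting branch selected by the sign of $\lambda$: for $\lambda > 0$ it reaches $\mathcal S_a^+$, and for $\lambda < 0$ it reaches $\mathcal S_a^-$. Because the centre manifolds from $K_1$ and $K_3$ sit in the PDE phase space as spatially homogeneous orbits, this ODE selection principle lifts verbatim to the connection of the extended centre manifolds in the full PDE problem.

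The main obstacle, as in the transcritical case, is to show that a genuine PDE solution with initial datum in $\Delta^{\textup{in}}$ actually tracks the connecting extended centre manifold across $K_1 \cup K_2 \cup K_3$ in the $\widetilde{\mathcal Z}$ norm. In $K_1$ the uniform spectral gap yields exponential contraction onto the centre manifold at a rate independent of $\eps$. In $K_2$ only algebraic chart-time is available, so the deviation $w(x,\tau) := u - \varphi$ from the spatially homogeneous reference $\varphi$ on the centre manifold must be controlled by deriving its linearised evolution equation (with time-dependent linear part encoding the slow passage through zero), treating the nonlinear and $\eps$-dependent remainders perturbatively via Duhamel's formula and weighted-norm estimates, and closing the estimate with a Gronwall-type argument in the spirit of \cite{Collet1990,Kirrmann1992,Kuehn2019,Schneider2017}. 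Tracking through $K_2$ with a polynomially small error is then upgraded, in the exit chart $K_3$, to the claimed $\mathcal O(\e^{-\gamma \rho^2 / 2 \eps})$ exponentially small error via the strong contractivity toward $\mathcal S_a^\pm$ on the $\mathcal O(1)$ $\mu$-interval there. Blowing down and distinguishing the two subcases according to the sign of $\lambda$, which breaks the $u \mapsto -u$ symmetry of the principal part $\mu u - u^3$ and hence determines which branch is selected, produces \eqref{eq:asymptotics_pitchfork_up} and \eqref{eq:asymptotics_pitchfork_down} respectively.
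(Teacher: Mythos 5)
Your plan follows the paper's strategy almost exactly: cylindrical blow-up with a state-dependent spatial rescaling, centre manifold theory in the Banach space setting of \cite{Haragus2010} in the entry and exit charts after verifying the spectral-gap and resolvent hypotheses, the ODE connection result of \cite{Krupa2001c} in the rescaling chart to select the branch according to $\sgn \lambda$, a Duhamel--Gr\"onwall tracking estimate in $K_2$, and blow-down at the end. There is, however, one concrete error at the very first step: the blow-up weights. You propose $(u,\mu,\eps) = (r\bar u, r^2\bar\mu, r^3\bar\eps)$, which are the weights of a \emph{fold} point; for the dynamic pitchfork the correct weights are $(r\bar u, r^2\bar\mu, r^4\bar\eps)$ (the paper's $\eps = r^{2(s-1)}\bar\eps$ with $s=3$). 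The weight of $\eps$ is forced by the slow equation: with $u\sim r$, $\mu\sim r^2$ the reaction terms $\mu u$ and $u^3$ have weight $3$, so time must be desingularised by $r^2$, and then $\dot\mu = \eps$ is only homogeneous if $\eps\sim r^{2+2}=r^4$. Equivalently, in the rescaling chart one needs $r_2 = \eps^{1/4}$ so that $\mu_2' = 1$; with your choice $r_2 = \eps^{1/3}$ the desingularised slow drift is $\mu_2' = r_2^{-1} = \eps^{-1/3}$, which blows up as $\eps\to 0$, so the chart equations are not a regular perturbation of the limiting pitchfork system and the $K_2$ analysis (in particular the connection lemma imported from \cite{Krupa2001c}, whose pitchfork blow-up also uses $\eps = r^4\bar\eps$) no longer applies. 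Note also that with the correct weights the symmetry-breaking term $\lambda\eps$ appears in $K_2$ at order $r_2 = \eps^{1/4}$, not at leading order, and it is precisely this perturbative term that selects $\mathcal S_a^+$ or $\mathcal S_a^-$; your heuristic that the weights should make $\partial_x^2 u$, $\mu u$, $u^3$ \emph{and} $\lambda\eps$ all balance simultaneously cannot be realised together with $\dot\mu = \eps$, and the slow equation must win.

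Apart from this, your accounting of where the exponential smallness arises differs slightly from the paper's but is not a gap: the paper obtains an $\mathcal O(\e^{-\gamma/2\eps_1})$ error already on exiting $K_1$ (the desingularised transition time there is $\mathcal O(1/\eps)$), propagates it through $K_2$ with only an $\eps$-independent constant factor from the Gr\"onwall estimate (the chart-$K_2$ time $T_2 = 2\Omega$ is fixed), and keeps it exponentially small in $K_3$; your version, in which the $K_2$ error is merely small and the exponential factor is generated by the $\mathcal O(1/\eps)$-long contraction in $K_3$, would also deliver the stated estimate for some $\gamma > 0$, provided the $K_2$ tracking is sharp enough to land the solution inside the basin of the exit-chart centre manifold.
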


\begin{remark}
	The $\mathcal O$-notation in equations \eqref{eq:asymptotics_transcritical}, \eqref{eq:asymptotics_pitchfork_up} and \eqref{eq:asymptotics_pitchfork_down} is understood in terms of convergence in the $\widetilde{\mathcal Z}$ norm.
\end{remark}

Theorems \ref{thm:transcritical} and \ref{thm:pitchfork} can be viewed as (geometrically formulated) counterparts to the exchange of stability results which apply on bounded spatial domains, as presented in \cite{Butuzov2000}. However, both theorems are proven using a novel adaption of the geometric blow-up method in Section \ref{sec:proofs} below and, from a conceptual and methodological point of view, it is more natural to view them as extensions of the corresponding results on fast-slow ODEs in \cite{Krupa2001c} to the case of scalar reaction-diffusion equations. We reiterate that we consider the method of proof -- as opposed to the particular results in Theorems \ref{thm:transcritical} and \ref{thm:pitchfork} -- to be our main contribution in this article. Nevertheless, Theorems \ref{thm:transcritical} and \ref{thm:pitchfork} are of independent interest, and in the following we collect a number of observations which relate to their interpretation and implications:
\begin{enumerate}[a)]
	\item Equations \eqref{eq:asymptotics_transcritical}, \eqref{eq:asymptotics_pitchfork_up} and \eqref{eq:asymptotics_pitchfork_down} show that solutions are exponentially close to one of the slow manifolds of spatially independent solutions from Lemma \ref{lem:slow_manifolds} in the $\widetilde{\mathcal Z}$ (and therefore also $\mathcal C_{\textup{b,unif}}^2(\R)$) norm when they leave a neighbourhood of the origin. Indeed, the proof in Section \ref{sec:proofs} shows that after an initial transient which occurs for negative $\mu$-values bounded away from zero, the PDE solutions track the extension of $\mathcal S_a^-$ in the case of Theorem \ref{thm:transcritical}, and the extension of $\mathcal S_a$ in the case of Theorem \ref{thm:pitchfork}, through an entire neighbourhood of the origin.
	\item In general, $u(x,T)$ still depends on $x$ via the higher order terms in \eqref{eq:asymptotics_transcritical}, \eqref{eq:asymptotics_pitchfork_up} or \eqref{eq:asymptotics_pitchfork_down}, but this dependence is exponentially small in the $\widetilde{\mathcal Z}$ norm as $\eps \to 0$.
	\item The constant $\gamma > 0$ measures the size of a spectral gap associated with certain linearised problems that arise within the ``exit chart'' of the blow-up. This means that a quantitative estimates for $\gamma$ can be obtained. In the case of Theorem \ref{thm:transcritical}, for any fixed $\gamma \in (0,1)$ there exists an $\eps_0 > 0$ such that the theorem holds. In the case of Theorem \ref{thm:pitchfork}, for any fixed $\gamma \in (0,2)$ there exists an $\eps_0 > 0$ such that the theorem holds. These estimates follow from the proofs of Lemmas \ref{lem:K3_center_manifold_transcritical} and \ref{lem:K3_center_manifold} respectively.
	\item Corresponding statements for the original system \eqref{eq:main_general} can be derived by inverting the linear and near-identity transformations used in the proof of Lemma \ref{lem:normal_forms}. In particular, the `threshold values' of the parameter $\lambda$ are given by \eqref{eq:lambda_transcritical} and \eqref{eq:lambda_pitchfork}.
	\item Theorem \ref{thm:transcritical} only covers the case $\lambda > 0$. The proof in Section \ref{sub:proof_transcritical} shows that solutions are also strongly attracted to a spatially homogeneous slow manifold in the ``entry chart'' when $\mu$ is negative and bounded away from zero when $\lambda < 0$. However, the ODE analysis in \cite{Krupa2001c} shows that the extension of this slow manifold diverges and leads to a `fast escape' scenario; see in particular \cite[Thm.~2.1 Assertion (a)]{Krupa2001c}. This leads to technical issues in the blow-up, which we have not yet been able to resolve (we refer Remark \ref{rem:spherical_blow-up} below for more on this point).
	\item We expect to find spatio-temporal canard phenomena in both systems \eqref{eq:main_transcritical} and \eqref{eq:main_pitchfork}, for $\lambda$-values in an $\eps$-dependent interval about $\lambda = 0$ which shrinks to zero as $\eps \to 0$. This is based on the existence of canard solutions in the ODE system which determines the behaviour of the spatially homogeneous manifolds which `direct' the evolution of the PDE solutions (similarly to the manner in which the slow manifolds identified in Lemma \ref{lem:slow_manifolds} direct PDE solutions with $\lambda > 0$ in Theorem \ref{thm:transcritical} and $\lambda \neq 0$ in Theorem \ref{thm:pitchfork}). We expect an associated delay effect, which could be analysed using an adaptation of arguments developed for problems on bounded spatial domains in \cite{Butuzov2002c,Maesschalck2009}. The question of whether spatio-temporal canard phenomena can be understood via geometric blow-up in this context is open, and left for future work.
\end{enumerate}

\section{Proof via geometric blow-up}
\label{sec:proofs}

In order to resolve the spectral degeneracy described in Section \ref{sec:Classical_bifurcation_in_the_singular_limit}, we consider the extended system obtained by appending the trivial equation $\eps' = 0$ to system \eqref{eq:main_system}:
\begin{equation}
	\label{eq:main_system_extended}
	\begin{split}
		\partial_t u &= \partial_x^2 u + u \mu - u^s + \eps \lambda + \mathcal R^{(s)}(u, \mu, \eps) , \\
		\dot \mu &= \eps , \\
		\dot \eps &= 0, 
	\end{split}
\end{equation}
which we shall hereafter view as an evolution equation on a Banach space. More precisely, we shall work with
\begin{equation}
	\label{eq:spaces}
		\mathcal X := \widetilde{\mathcal X} \times \R^2, \qquad
		\mathcal Y := \widetilde{\mathcal Y} \times \R^2, \qquad
		\mathcal Z := \widetilde{\mathcal Z} \times \R^2,
\end{equation}
where $\widetilde{\mathcal Z}$ is the polynomially weighted space defined in \eqref{eq:tilde_Z}, $\widetilde{\mathcal X}$ is the space defined in \eqref{eq:tilde_X} and $\widetilde{\mathcal Y}$ is a polynomially weighted space defined in a similar fashion to $\widetilde{\mathcal Z}$ although only up to the first derivative, i.e.~
\begin{equation}
	\label{eq:tilde_Y}
	\widetilde{\mathcal Y} := \left\{ v \in \mathcal C^1_{\textup{b,unif}}(\R) : \| v \|_{\widetilde{\mathcal Y}} < \infty \right\} ,
\end{equation}
where
\[
\| v(\cdot) \|_{\widetilde{\mathcal Y}} = \| v(\cdot) \|_\infty + \| (1 + | \cdot |) v^{(1)} (\cdot) \|_\infty .
\]
The spaces $\mathcal X$, $\mathcal Y$ and $\mathcal Z$ are Banach spaces when endowed with the norms
\[
	\| \cdot \|_{\mathcal X} = \| \cdot \|_{\widetilde{\mathcal X}} + | (\cdot, \cdot) | , \qquad
	\| \cdot \|_{\mathcal Y} = \| \cdot \|_{\widetilde{\mathcal Y}} + | (\cdot, \cdot) | , \qquad
	\| \cdot \|_{\mathcal Z} = \| \cdot \|_{\widetilde{\mathcal Z}} + | (\cdot, \cdot) | ,
\]
respectively, and the inclusion map induces a natural continuous embedding $\mathcal Z \hookrightarrow \mathcal Y \hookrightarrow \mathcal X$. We now rewrite system \eqref{eq:main_system_extended} as
\[
\frac{\dd \bm{u}}{\dd t} = \Lop \bm{u} + \Nop (\bm{u}) , 
\]
where $\bm{u} = (u, \mu, \eps)^\transpose \in \mathcal Z$, and $\Lop : \mathcal Z \to \mathcal X$, $\Nop : \mathcal Z \to \mathcal Y$ are linear respectively nonlinear operators defined by
\[
\Lop \bm u = 
\begin{pmatrix}
	\partial_x^2 u + \lambda \eps \\
	\eps \\
	0
\end{pmatrix},
\qquad 
N(\bm u) =
\begin{pmatrix}
	\mu u - u^s + \mathcal R^{(s)}(u,\mu,\eps) \\
	0 \\
	0
\end{pmatrix}.
\]

\begin{remark}
	We choose $\mathcal Y \subset \mathcal C^1_{\textup{b,unif}}(\R)$ instead of $\mathcal Y = \mathcal X$ 
	in \eqref{eq:spaces} because the range of the nonlinearity which appears in the the phase-directional (entry/exit) charts of the blow-up analysis to come includes first derivatives in space. Choosing $\mathcal Y \subset \mathcal C^1_{\textup{b,unif}}(\R)$ allows us to work with the same choice of spaces throughout.
\end{remark}

We now define a blow-up transformation
\[
\Phi : \widetilde{\mathcal Z} \times \mathbb S^1 \times \R_+ \to \mathcal Z 
\]
via
\begin{equation}
	\label{eq:blow-up_map}
	\Phi : \left(\bar u, (\bar \mu, \bar \eps), r \right) \ \mapsto \ 
	\begin{cases}
		u = r \bar u , \\
		\mu = r^{s-1} \bar \mu , \\
		\eps = r^{2 (s-1)} \bar \eps ,
	\end{cases}
\end{equation}
where $\mathbb S^1$ denotes the unit circle; in particular, $\bar \mu^2 + \bar \eps^2 = 1$. We also emphasise that the radial variable $r \geq 0$ depends on time only, i.e.~it is spatially independent, due to the relation $\mu = r^{s-1} \bar \mu$.

\begin{remark}
	The fact that $r$ only depends on time is a consequence of the fact that $\mu = \mu(0) + \eps t$ only depends on time. This is typical in dynamic bifurcation problems, i.e.~in PDEs with `parameters' which slowly vary in time (but not space). In more general PDE settings the radial variable $r$, which measures the `distance' between a solution and the blow-up manifold, will depend on both time and space. This poses a significant analytical obstacle when it comes to the `spatial desingularisation' of the blown-up problem; we refer to Remark \ref{rem:desingularisation} below.
\end{remark}

\begin{remark}
	\label{rem:spherical_blow-up}
	In the ODE counterpart to the problem considered in \cite{Krupa2001c}, a non-normally hyperbolic point at the origin in $\R^3$ (the extended phase space in which $\eps$ is considered as a variable) is blown-up to a $2$-sphere. In contrast, the blow-up transformation defined in \eqref{eq:blow-up_map} blows the set $\{ (u, 0, 0) : u \in \widetilde{\mathcal Z} \}$ up to a `cylinder' 
	which is naturally identified with $\widetilde{\mathcal Z} \times \mathbb S^1$, as shown in Figure \ref{fig:blow-up}. The fact that the blown-up object is an infinite-dimensional object is a direct consequence of the fact that we are working in infinite-dimensional Banach spaces. However, the fact that the blown-up object is `cylindrical' as opposed to `spherical' is a consequence of the fact that we keep $\bar u$ independent from $\bar \mu$ and $\bar \eps$, as opposed to requiring, for example, that
	\begin{equation}
		\label{eq:sphere}
		\bar \mu^2 + \bar \eps^2 + \| \bar u \|^2_{\widetilde{\mathcal Z}} = 1 .
	\end{equation}
	This simplifies the geometry and suffices for the problems considered herein, however it is worthy to note that the incorporation of $\bar u$ into the blow-up via constraints such as \eqref{eq:sphere} is expected to be necessary in the study of other singularities. The question of how to define coordinate charts which in some sense correspond to ``$\bar u = \pm 1$'' in this setting is an open problem.
\end{remark}

\begin{figure*}[t!]
	\centering
	\includegraphics[scale=0.3]{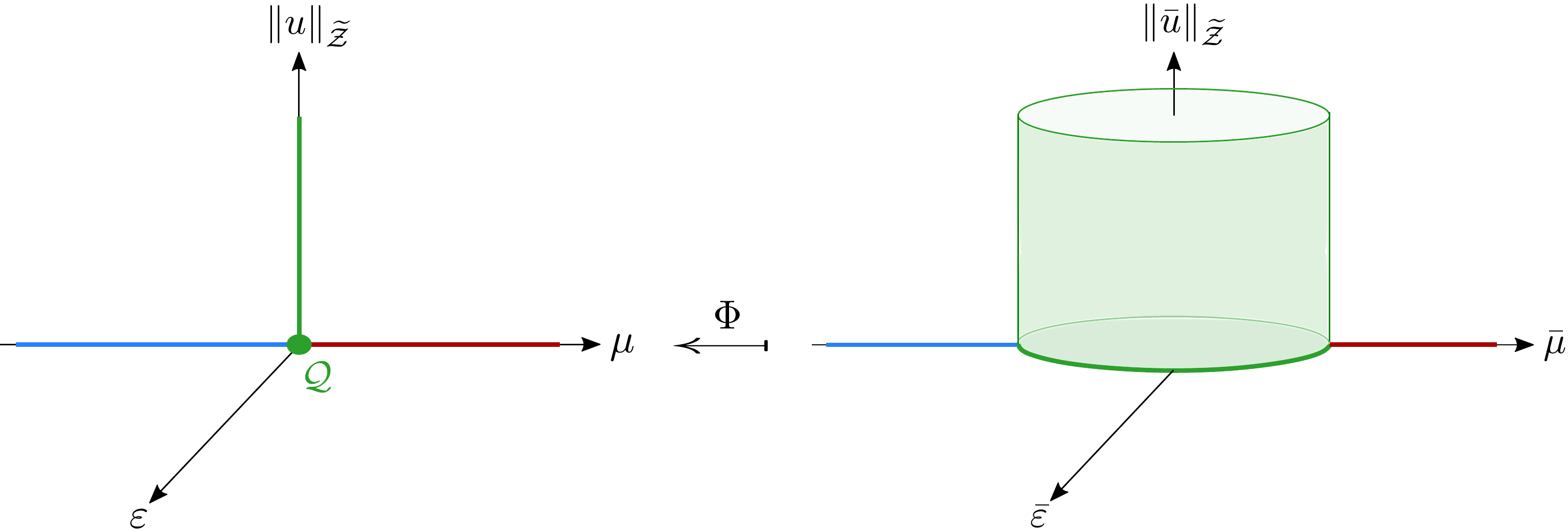}
	\caption{Geometry of the original (blown-down) space (left), and the blown-up space (right) which is obtained via the map $\Phi$ defined in \eqref{eq:blow-up_map}. In order to obtain a geometric representation in $3$ dimensions we show the norms $\| u \|_{\widetilde{\mathcal Z}}$ and $\| \bar u \|_{\widetilde{\mathcal Z}}$ on the vertical axes, as opposed to the variables $u$ and $\bar u$ themselves. The vertical axis in $(\| u \|_{\widetilde{\mathcal Z}}, \mu, \eps)$-space (shown in green) is blown-up to a cylinder in the blown-up $(\| \bar u \|_{\widetilde{\mathcal Z}}, \bar \mu, \bar \eps)$-space (also in green). We also indicate the pre-images of the left and right branches of the $\mu$-axis under $\Phi$ in blue and red respectively (cf.~Figure \ref{fig:critical_manifolds}), as well as the circular pre-image $\Phi^{-1}(\mathcal Q)$ of the singular point $\mathcal Q$ (both are shown in bold green), which corresponds to either $T$ or $P$ in Figure \ref{fig:critical_manifolds}, depending on whether $s = 2$ or $s = 3$ respectively.}
	\label{fig:blow-up}
\end{figure*}

As is usual in blow-up analyses, we shall work in projective coordinates. Three coordinate charts are needed for the analysis, namely,
\[
K_1 : \bar \mu = - 1, \qquad 
K_2 : \bar \eps = 1, \qquad 
K_3 : \bar \mu = 1 ,
\]
which (in accordance with conventions) we will often be referred to as the entry, rescaling and exit charts respectively. We denote local coordinates in each chart by
\[
\begin{split}
	K_1 &: (u, \mu, \eps) = \left( r_1 u_1, -r_1^{s-1}, r_1^{2(s-1)} \eps_1 \right) , \\
	K_2 &: (u, \mu, \eps) = \left( r_2 u_2, r_2^{s-1} \mu_2, r_2^{2(s-1)} \right) , \\
	K_3 &: (u, \mu, \eps) = \left( r_3 u_3, r_3^{s-1}, r_3^{2(s-1)} \eps_3 \right) ,
\end{split}
\]
and note that the change of coordinate formulae are given by
\begin{equation}
	\label{eq:kappa_ij}
	\begin{split}
		\kappa_{12}^{(s)} &: (u_1, r_1, \eps_1) = 
		\left( (-\mu_2)^{- 1/(s-1)} u_2, (-\mu_2)^{1/(s-1)} r_2, \mu_2^{-2} \right) , \\
		\kappa_{23}^{(s)} &: (u_2, r_2, \mu_2) = \left( \eps_3^{- 1 / 2(s-1)} u_3, \eps_3^{1 / 2(s-1)} r_3, \eps_3^{-1/2} \right) ,
	\end{split}
\end{equation}
where the first (second) coordinate change is defined for $\mu_2 < 0$ ($\eps_3 > 0$). 

\subsection{The pitchfork case ($s = 3$)}
\label{sub:proof_pitchfork}

We consider the pitchfork case with $s=3$ first. We proceed as usual by considering the dynamics in each coordinate chart $K_i$, before combining everything in order to prove Theorem \ref{thm:pitchfork}.

\subsubsection{Chart $K_1$}

After rewriting system \eqref{eq:main_system_extended} in chart $K_1$ coordinates and applying a state-dependent transformation of time and space which is defined via the relations
\begin{equation}
	\label{eq:desingularisation_K1}
	\frac{\dd t_1}{\dd t} = r_1^2 , \qquad 
	\frac{x_1}{x} = r_1 ,
\end{equation}
we obtain a set of equations in chart $K_1$ which can be written as
\begin{equation}
	\label{eq:K1_eqns}
	\frac{\dd \bm{u}_1}{\dd t_1} = \Lop_1 \bm{u}_1 + \Nop_1(\bm{u}_1),
\end{equation}
where $\bm{u}_1 = (u_1, r_1, \eps_1)$, $\Lop_1$ is a linear operator defined via 
\begin{equation}
	\label{eq:L1}
	\Lop_1 \bm{u}_1 = 
	\begin{pmatrix}
		\partial_{x_1}^2 u_1 - u_1 \\
		0 \\
		0
	\end{pmatrix} ,
\end{equation}
and $\Nop_1$ is a nonlinear operator defined via
\[
	\Nop_1(\bm{u}_1) = 
	\begin{pmatrix}
		\frac{1}{2} \eps_1 u_1 - u_1^3 + \lambda r_1 \eps_1 - \frac{1}{2} \eps_1 x_1 \partial_{x_1} u_1 + r_1 \mathcal R_1 (u_1, r_1, \eps_1) \\
		- \frac{1}{2} r_1 \eps_1 \\
		2 \eps_1^2
	\end{pmatrix},
\]
where $\mathcal R_1(u_1, r_1, \eps_1) := r_1^{-4} \mathcal R(r_1 u_1, -r_1^2, r_1^4 \eps_1)$, which is well-defined and $\mathcal O(u_1^2, r_1 u_1, r_1^2 \eps_1)$ due to the asymptotics in Lemma \ref{lem:normal_forms} Assertion (ii).

\begin{remark}
	\label{rem:desingularisation}
	The additional transport term $- \tfrac{1}{2} \eps_1 x_1 \partial_{x_1} u_1$ appearing in the right-hand side of system \eqref{eq:K1_eqns} is a consequence of the change of independent variables from $(x,t)$ in system \eqref{eq:main_system_extended}, to $(x_1, t_1)$ in system \eqref{eq:K1_eqns}. Explicitly, the transformation $u(x,t) = r_1(t_1) u_1(x_1, t_1)$, in combination with the relations in \eqref{eq:desingularisation_K1}, implies that
	\begin{equation}
		\label{eq:u1_eqn}
			\partial_t u = \frac{\dd t_1}{\dd t} \partial_{t_1} (r_1 u_1) + \frac{\dd x_1}{\dd t} \partial_{x_1} (r_1 u_1) 
			= r_1^2 \left[ r_1 \partial_{t_1} u_1 + r_1' u_1 + x_1 r_1' \partial_{x_1} u_1 \right] ,
	\end{equation}
	where we use the fact that
	\[
	\frac{\dd x_1}{\dd t} = \frac{\dd x_1}{\dd t_1} \frac{\dd t_1}{\dd t} = r_1 x_1 r_1' .
	\]
	Rearranging and evaluating \eqref{eq:u1_eqn} leads to the equation for $\partial_{t_1} u_1$ in \eqref{eq:K1_eqns}.
	
	\SJ{Similar observations and approaches can be found elsewhere in the literature. In their blow-up analysis of a PDE system posed on a bounded interval $[-a,a]$, \cite{Engel2024,Engel2020} apply a state-dependent rescaling of the form $a = a_1 / r_1$ (or a suitable scaling thereof) instead of a spatial rescaling $x = x_1 / r_1$. This leads to a free boundary problem (instead of introducing an additional transport term).} There is also a formal similarity between the transformation $x = x_1 / r_1$ and time-dependent transformations of space that are commonly applied in order to transform problems with moving boundaries to a coordinate system in which the boundary is fixed; see e.g.~\cite{Tsubota2024} for a recent application of this approach in setting which is closely related to the study of slow passage problems in PDEs.
\end{remark}

\begin{remark}
	\label{rem:desingularisation_2}
	Transformations that are similar to those in \eqref{eq:desingularisation_K1} are common in the study of finite-time blow-up effects by means of \textit{dynamic renormalization}; see e.g.~\cite{Bricmont1995,Chapman2021,Kevrekidis2003,Siettos2003}. In this setting, the left-most transformation in \eqref{eq:desingularisation_K1} would typically be formulated as an integral transformation
	\begin{equation}
		\label{eq:t_transformation}
		t = \int_0^{t_1} \frac{1}{r_1(s)} \dd s .
	\end{equation}
	In our case it suffices to define $t_1$ via in terms of $\dd t / \dd t_1$, which only determines $t_1$ up to an additive constant, since the right-hand side in \eqref{eq:desingularisation_K1} is autonomous and invariant under time translations $t_1 \mapsto t_1 + \alpha$ for any real constant $\alpha$.
	%
\end{remark}

System \eqref{eq:K1_eqns} has a branch of spatially homogeneous steady states
\[
\mathcal P_1 = \left\{ (0, r_1, 0) : r_1 \in [0,\nu] \right\} 
\]
where $\nu > 0$ is a small constant. For $r_1 > 0$, $\mathcal P_1$ corresponds to the branch of steady states along $\{ (0,\mu,0) : \mu < 0 \}$ in the extended system \eqref{eq:main_system_extended} under the blow-up map $\Phi$, viewed in chart $K_1$. Our aim is to apply center manifold theory in a neighbourhood of the point \SJ{$\mathcal Q_1 := (0,0,0) \in \mathcal P_1$}, which lies at the intersection of $\mathcal P_1$ with the blow-up manifold (where $r_1 = 0$).

In order to apply center manifold theory, we need to check three hypotheses, namely, Hypotheses \ref{hyp:1}, \ref{hyp:2} and \ref{hyp:3} in Appendix \ref{app:center_manifold_theory}, which correspond to Hypotheses 2.1, 2.4 and 2.15 in \cite{Haragus2010} respectively. In the following we aim to present a relatively complete treatment of each hypothesis in the particular context of our problem, however we shall refer occasionally to Appendix \ref{app:center_manifold_theory} for details. 
%
%
The first of these -- Hypothesis \ref{hyp:1} -- is easy to verify. Here we simply state the relevant properties without proof:
\begin{enumerate}
	\item[(i)] $\Lop_1 \in \mathcal L(\mathcal Z, \mathcal X)$;
	\item[(ii)] There exists a $k \geq 1$ and a neighbourhood $\mathcal V_1$ of $\mathcal Q_1$ in $\mathcal Z$ such that $\Nop_1 \in \mathcal C^k(\mathcal V_1, \mathcal Y)$, $\Nop_1(\mathcal Q_1) = \bm{0}$ and $\D \Nop_1(\mathcal Q_1) = \bm{0}$.
\end{enumerate}

\begin{remark}
	In order to apply center manifold theory, we also need that $\Lop_1$ is a 
	closed linear operator. 
	This follows from the fact that the resolvant set satisfies $\textup{Res}(\Lop_1) \neq \emptyset$ (due to Lemma \ref{lem:spectrum_K1} below) and the closed graph theorem; see e.g.~\cite[Remark 2.5]{Haragus2010}. 
\end{remark}

\begin{remark}
	The choice to restrict to polynomially weighted spaces $\mathcal Y$ and $\mathcal Z$ for which the first (and in the latter case also the second) derivative decays as $|x_1| \to \infty$ is made so that the requirement $\Nop_1 \in \mathcal C^k(\mathcal V_1, \mathcal Y)$ in property (ii) above is satisfied. In particular, the decay ensures that the transport term $\frac{1}{2} \eps_1 x_1 \partial_{x_1} u_1$ which appears in $\Nop_1(\bm{u}_1)$ is `higher order' over the entire domain.
\end{remark}


%

In order to verify the second hypothesis -- Hypothesis \ref{hyp:2} -- we need to show that the spectrum associated with the linearised problem possesses a spectral gap. This is assured by the following result.

\begin{lemma}
	\label{lem:spectrum_K1}
	The spectrum associated with the linear operator $\Lop_1$ 
	is given by $\sigma(\Lop_1) = \sigma_-(\Lop_1) \cup \sigma_0(\Lop_1)$, where
	\[
	\sigma_-(\Lop_1) = (-\infty, -1], \qquad
	\sigma_0(\Lop_1) = \{ 0 \}.
	\]
	The $0$-eigenvalue has algebraic multiplicity $2$, and the corresponding center subspace $\mathcal E_0$ is $2$-dimensional and spanned by the constant eigenfunctions $(0,1,0)^\transpose$ and $(0,0,1)^\transpose$.
\end{lemma}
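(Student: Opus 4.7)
The plan is to exploit the block structure of $\Lop_1$. The operator acts as the zero operator on the finite-dimensional $(r_1,\eps_1)$-component $\R^2$, and as the scalar differential operator $\mathrm A := \partial_{x_1}^2 - 1$, viewed as a closed densely defined operator with domain $\widetilde{\mathcal Z}$ and range in $\widetilde{\mathcal X}$, on the $u_1$-component. The resolvent equation $(\Lop_1 - \lambda)(v,a,b)^\transpose = (g,c,d)^\transpose$ therefore decouples into $(\mathrm A - \lambda) v = g$ together with $-\lambda a = c$ and $-\lambda b = d$, so that $\sigma(\Lop_1) = \sigma(\mathrm A) \cup \{0\}$. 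The two-dimensional kernel of the $\R^2$-block at $\lambda = 0$ is spanned by $(0,1,0)^\transpose$ and $(0,0,1)^\transpose$; because this block is the zero operator its algebraic and geometric multiplicities coincide, so once $0 \in \rho(\mathrm A)$ is established, the full statement about $\mathcal E_0$ and the algebraic multiplicity will follow.

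Next I would prove that $\sigma(\mathrm A) = (-\infty, -1]$ directly. For $\lambda \in \mathbb{C} \setminus (-\infty, -1]$, set $\omega = \sqrt{1+\lambda}$ with $\real \omega > 0$ and verify that the Green's function representation
\begin{equation*}
(\mathrm A - \lambda)^{-1} g (x) = -\frac{1}{2\omega} \int_\R \e^{-\omega |x - y|} g(y) \, \dd y
\end{equation*}
defines a bounded operator $\widetilde{\mathcal X} \to \widetilde{\mathcal Z}$. The constant limits $g_{\pm\infty}$ of $g$ as $x \to \pm \infty$ are transported to limits $-g_{\pm\infty} / (1+\lambda)$ of $v$, while the exponential decay of the kernel dominates the polynomial weights in $\widetilde{\mathcal Z}$ via standard integral estimates obtained by splitting the domain into $\{|x - y| \leq |x|/2\}$ and its complement, applied to $v$, $v'$ and $v''$ in turn. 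Setting $\lambda = 0$ yields in particular that $0 \in \rho(\mathrm A)$, which completes the structural reduction from the first paragraph.

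For the converse inclusion $(-\infty, -1] \subset \sigma(\mathrm A)$, the case $\lambda = -1$ is immediate since $v \equiv 1 \in \widetilde{\mathcal Z}$ satisfies $(\mathrm A + 1) v = 0$. For $\lambda = -1 - k^2$ with $k > 0$, the bounded homogeneous solutions $\cos(k x), \sin(k x)$ of $v'' + k^2 v = 0$ have non-decaying derivatives and therefore lie outside $\widetilde{\mathcal Z}$, so I would instead exhibit a range obstruction: variation of parameters applied to $v'' + k^2 v = g$ with a compactly supported smooth $g$ for which $\int_\R \e^{\i k y} g(y) \, \dd y \neq 0$ produces formal solutions satisfying $v'(x) = \alpha \cos(k x) + \beta \sin(k x) + o(1)$ as $|x| \to \infty$ for some nonzero $(\alpha,\beta)$, which is incompatible with $(1+|x|) v' \in L^\infty$; hence $\lambda \in \sigma(\mathrm A)$. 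The main technical subtlety is managing the interplay between the polynomial weights imposed by $\widetilde{\mathcal Z}$ and the asymptotic behaviour of solutions at infinity --- the weights are strong enough to exclude the oscillatory homogeneous solutions for $\lambda < -1$, yet weak enough to admit the Green's function convolution whenever $\real \sqrt{1+\lambda} > 0$ --- but both families of estimates are routine consequences of the exponential decay of the Green's kernel.
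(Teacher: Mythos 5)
Your overall architecture is sound and, in places, cleaner than the paper's: the block reduction $\Lop_1 = \mathrm A \oplus 0_{\R^2}$, the identification of $\mathcal E_0$ and of the multiplicity from the zero block, the eigenvalue at $-1$, and the range obstruction for $\lambda = -1-k^2$ are all fine. In fact your obstruction (compactly supported $g$ with $\int \e^{\i k y}g(y)\,\dd y \neq 0$, or equivalently a Weyl sequence $\e^{\i k x}\phi(x/n)$) is the right one; the paper's corresponding step claims that nonzero constants are not in the range, which is not an obstruction at all, since $v''+k^2v=c$ has the constant solution $c/k^2 \in \widetilde{\mathcal Z}$. The paper's route for the resolvent-set inclusion is also different in texture: it first records $\sigma(\partial_{x_1}^2-1)=(-\infty,-1]$ on $\mathcal C^0_{\textup{b,unif}}(\R)$ via the dispersion relation and then transfers to the weighted setting, rather than writing the Green's function explicitly.

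The genuine gap is in your key step: the claim that, for $\lambda \notin (-\infty,-1]$, convolution with $-\tfrac{1}{2\omega}\e^{-\omega|x-y|}$ is bounded from $\widetilde{\mathcal X}$ to $\widetilde{\mathcal Z}$ by ``routine'' splitting estimates. This is false. Splitting at $|x-y|=|x|/2$ only makes the far-field contribution exponentially small; the near-field part contributes $O(\|g\|_\infty)$ with no gain in $|x|$, and the decay of $v'$ and $v''$ at infinity is governed by how fast $g$ approaches its limits (equivalently by the decay of $g'$), not by the exponential kernel. Concretely, at $\lambda=0$ take $g(x)=\cos(x)(1+x^2)^{-1/4}\in\widetilde{\mathcal X}$; the unique bounded solution of $v''-v=g$ satisfies $v(x)=-\tfrac12\cos(x)(1+x^2)^{-1/4}+O((1+|x|)^{-3/2})$, hence $v''=v+g\approx\tfrac12\cos(x)(1+x^2)^{-1/4}$ and $(1+x^2)v''$ is unbounded, so no solution lies in $\widetilde{\mathcal Z}$. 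Thus $(\mathrm A-\lambda)^{-1}$ does not map $\widetilde{\mathcal X}$ into $\widetilde{\mathcal Z}$, and the inclusion $\mathbb C\setminus(-\infty,-1]\subseteq\textup{Res}(\mathrm A)$ cannot be obtained the way you propose; some decay of the data's derivative is needed, which is precisely why the paper's quantitative resolvent bounds (Lemma \ref{lem:resolvent_K1}, via Lemma \ref{lem:resolvant_bound}) are formulated from $\mathcal Y$ into $\mathcal Z$ rather than from $\mathcal X$ into $\mathcal Z$. (The paper's own proof is thin at the same point --- it asserts exponential decay of resolvent images of $\widetilde{\mathcal X}$-functions, which holds only for localized data --- but your proposal makes this problematic mapping property the explicit backbone of the argument, so it must be repaired, e.g.\ by computing the spectrum through the unweighted space first or by restricting the $\widetilde{\mathcal Z}$-mapping claim to data with decaying derivative.) Note also that your multiplicity claim need not route through $0\in\textup{Res}(\mathrm A)$: since $\Lop_1\bm w$ always has vanishing second and third components and $w_1''-w_1=0$ has no nonzero bounded solutions, one gets $\ker\Lop_1^2=\ker\Lop_1$ directly.
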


\begin{proof}
	We calculate the eigenvalues first. Let $\zeta \in \mathbb C$, $\bm{v_1} = (v_1, v_2, v_3) \in \mathcal Z$ and consider the eigenvalue problem $\Lop_1 \bm{v}_1 = \zeta \bm{v}_1$ or, equivalently,
	\begin{equation}
		\label{eq:eigenvalue_problem_K1}
		\begin{pmatrix}
			v_1''(x_1) - (1 + \zeta) v_1(x_1) \\
			- \zeta v_2 \\
			- \zeta v_3
		\end{pmatrix} 
		= \bm{0} ,
	\end{equation}
	where $v_1''$ denotes the second derivative of $v_1$ with respect to $x_1$. We consider the cases $\zeta = 0$ and $\zeta \neq 0$ separately.
	
	If $\zeta = 0$, then a direct calculation together with the requirement that $v_1 \in \tilde{\mathcal Z}$ is uniformly bounded shows that solutions of \eqref{eq:eigenvalue_problem_K1} must be of the form $\bm{v}_1 = (0, v_2, v_3)$. Since $\bm{v}_1 = (0, v_2, v_3)$ is a solution for any choice of $(v_2, v_3) \in \R^2$, it follows that $0 \in \sigma_0(\Lop_1)$ and that the eigenspace corresponding the $\zeta = 0$ is spanned by $(0,1,0)^\transpose$ and $(0,0,1)^\transpose$.
	
	If $\zeta \neq 0$, then equation \eqref{eq:eigenvalue_problem_K1} immediately yields the requirement $v_2 = v_3 = 0$, implying that eigenfuctions (if they exist) must be of the form $\bm{v}_1 = (v_1, 0, 0)$. Directly solving the second order ODE for $v_1$, we obtain
	\[
	v_1(x_1) =
	\begin{cases}
		c_0, & \zeta = -1 , \\
		c_1 \e^{\sqrt{1 + \zeta} x_1} + c_2 \e^{- \sqrt{1 + \zeta} x_1} , & \zeta \neq -1 ,
	\end{cases}
	\]
	for any constants $c_0, c_1, c_2 \in \R$. It follows that $-1 \in \sigma_-(\Lop_1)$, with constant eigenfunctions of the form $\bm{v}_1 = (c_0, 0, 0)$ for any $c_0 \in \R \setminus \{0\}$. Other values with $\zeta \neq -1$ are ruled out by the additional requirement that $v_1 \in \widetilde{\mathcal Z}$, since the derivatives of $v_1(x_1)$ do not decay as $|x_1| \to \infty$. Thus, the eigenvalues are $\{-1,0\}$.
	
	In order to determine the essential spectrum associated with $\Lop_1$, we need to determine the essential spectrum associated with the linear operator $\widetilde \Lop_1 := \partial_{x_1}^2 - 1 \in \mathcal L(\widetilde{\mathcal Z} , \widetilde{\mathcal X})$. First, we note that the spectrum of the corresponding operator in $\mathcal L(\mathcal C_{\textup{b,unif}}^2(\R), \mathcal C_{\textup{b,unif}}^0(\R))$, which we denote by $\widehat \Lop_1$, is given by $(-\infty, -1]$. This follows from standard results; see e.g.~\cite[Sec.~3]{Sandstede2002} or \cite{Henry1981,Pazy1983}, which guarantee that the point spectrum is empty and that the essential spectrum can be determined by substituting $\bm{u}_1 = \e^{\zeta t_1 + i k x_1} \bm{v_1}$ into $\partial_{t_1} \bm{u}_1 = \widehat \Lop_1 \bm{u}_1$, leading to the dispersion relation
	\[
	\zeta = - 1 - k^2, \qquad k \in \R .
	\]
	Returning to the case at hand, i.e.~to the operator $\widetilde \Lop_1 \in \mathcal L(\widetilde{\mathcal Z} , \widetilde{\mathcal X})$, one can verify directly when $\zeta = 1 - k^2$ for any $k \in \R \setminus \{0\}$, the range $(\Lop_1 - \zeta)(\widetilde{\mathcal Z})$ does not contain the non-zero constant functions. Thus, $(-\infty, -1) \subseteq \sigma(\widetilde L_1)$. For all $\zeta \in \mathbb C \setminus (-\infty, 1]$, i.e.~for all $\zeta$ in the resolvent set for $\widehat \Lop_1$, elements in the range of $(\widehat \Lop_1 - \zeta)^{-1}(\widetilde X)$ are contained in $\mathcal C_{\textup{b,unif}}^2(\R)$ and decay exponentially as $|x_1| \to \infty$; we refer again to \cite[Sec.~3]{Sandstede2002}. Since exponentially decaying functions in $\mathcal C_{\textup{b,unif}}^2(\R)$ are also in $\mathcal{\widetilde Z}$, it follows that $\mathbb C \setminus (-\infty, 1]$ is also contained in the resolvent set of $\widetilde \Lop_1$. Combining this with the calculations for the spectrum of $\Lop_1$ above yields the desired result.
\end{proof}

The spectrum is sketched in Figure \ref{fig:spectral_gap}, and should be compared with Figure \ref{fig:spectrum_static}. Importantly, Lemma \ref{lem:spectrum_K1} shows that the spectral degeneracy identified in the original PDE system \eqref{eq:main_system} -- we refer back to Figure \ref{fig:spectrum_static} -- has been `resolved' in the blown-up space, at least in the entry chart $K_1$, insofar as the blown-up problem features a spectral gap property which is not present in the original blown-down problem. This somewhat remarkable feature of the blow-up transformation is what makes it such a powerful tool.

\begin{remark}
	\label{rem:resolution}
	In the blow-up analysis for the closely related ODE problem in \cite{Krupa2001c}, the loss of (normal) hyperbolicity at the transcritical or pitchfork point is `resolved' in the entry chart $K_1$ in the sense that the vector field is partially hyperbolic at the equilibria. One can also view the `resolution' in this setting as following from the presence of a spectral gap property in blown-up problem.
\end{remark}

\begin{figure}[t!]
	\centering
	\includegraphics[scale=0.5]{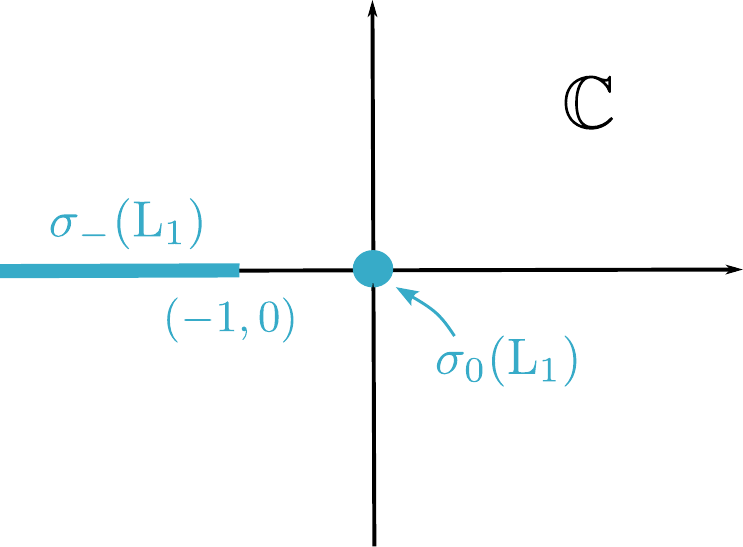}
	\caption{The spectrum $\sigma(\Lop_1)$ associated with the linear operator $\Lop_1$ defined in \eqref{eq:L1}, evaluated at \SJ{$\mathcal Q_1 := (0,0,0)$}. The stable spectrum $\sigma_-(\Lop_1)$ is continuous and bounded below $-1$, and the center spectrum $\sigma_0(\Lop_1) = \{0\}$ has an associated $2$-dimensional center subspace $\mathcal E_0$ which is spanned by constant eigenfunctions. In contrast to the corresponding spectrum for the blown-down problem shown in Figure \ref{fig:spectrum_static}(b), there is a spectral gap.}
	\label{fig:spectral_gap}
\end{figure}

We turn now to the third and final hypothesis -- Hypothesis \ref{hyp:3}. In our case, this is equivalent to satisfying the pair of resolvent bounds described in the following result.

\begin{lemma}
	\label{lem:resolvent_K1}
	For any $\omega \in \R$ such that $| \omega | \geq 1$, $i \omega \in \textup{Res} (\Lop_1)$ and there exists a constant $c > 0$ such that the following resolvent bounds are satisfied:
	\begin{equation}
		\label{eq:resolvent_bounds_K1}
			\| (i \omega \textup{Id} - \Lop_1)^{-1} \|_{\mathcal L(\mathcal X)} \leq \frac{c}{|\omega|} , \qquad
			\| (i \omega \textup{Id} - \Lop_1)^{-1} \|_{\mathcal L(\mathcal Y, \mathcal Z)} \leq \frac{c}{|\omega|^{1/2}} .
	\end{equation}
\end{lemma}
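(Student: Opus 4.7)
I begin by exploiting the block-diagonal structure of $\Lop_1$ in the splitting $\mathcal Z = \widetilde{\mathcal Z}\times\R^2$: $\Lop_1$ acts as $\widetilde\Lop_1 := \partial_{x_1}^2 - 1$ on the PDE component and as $0$ on the finite-dimensional $(r_1,\eps_1)$-block. This reduces the resolvent estimate on the $\R^2$-block to the trivial bound $|(i\omega)^{-1}| = |\omega|^{-1}$ and leaves me with analogous bounds for $(i\omega - \widetilde\Lop_1)^{-1}$ viewed as an operator $\widetilde{\mathcal X}\to\widetilde{\mathcal X}$ (for the first bound in \eqref{eq:resolvent_bounds_K1}) and $\widetilde{\mathcal Y}\to\widetilde{\mathcal Z}$ (for the second).

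Setting $\zeta := 1 + i\omega$, the equation $(i\omega - \widetilde\Lop_1)u = g$ becomes $-u'' + \zeta u = g$ and is solved by convolution $u = G_\zeta \ast g$ with Green's function $G_\zeta(x) = (2\sqrt{\zeta})^{-1}\e^{-\sqrt{\zeta}\,|x|}$, where the principal branch gives $\real\sqrt{\zeta} > 0$. For $|\omega|\geq 1$ one has $\real\sqrt{\zeta} \geq c|\omega|^{1/2}$ and $|\sqrt{\zeta}|\leq c'|\omega|^{1/2}$, so direct computation yields the elementary $L^1$-bounds $\|G_\zeta\|_{L^1}\leq c/|\omega|$, $\|G_\zeta'\|_{L^1}\leq c/|\omega|^{1/2}$ and weighted moment estimates $\||\cdot|^n G_\zeta\|_{L^1}\leq c_n/|\omega|^{1+n/2}$. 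The first bound in \eqref{eq:resolvent_bounds_K1} then follows from Young's convolution inequality, $\|u\|_\infty \leq \|G_\zeta\|_{L^1}\|g\|_\infty$, together with the observation that $u$ inherits the limits $g(\pm\infty)/\zeta$ and hence lies in $\widetilde{\mathcal X}$.

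For the second bound I would control each of the three terms of $\|u\|_{\widetilde{\mathcal Z}}$ in turn. The sup-norm is handled as above. For the first-derivative term, integration by parts gives $u'(x) = \int G_\zeta(x-y)\,g'(y)\,dy$ (boundary terms vanish by the exponential decay of $G_\zeta$), and combining the weight-splitting $1+|x|\leq (1+|x-y|)(1+|y|)$ with $|g'(y)|\leq\|g\|_{\widetilde{\mathcal Y}}/(1+|y|)$ yields $(1+|x|)|u'(x)|\leq\|g\|_{\widetilde{\mathcal Y}}\int(1+|z|)|G_\zeta(z)|\,dz \leq c|\omega|^{-1/2}\|g\|_{\widetilde{\mathcal Y}}$. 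For the weighted second-derivative term I would use the identity $u'' = \zeta u - g$ in the differenced form
\[
\zeta u(x) - g(x) = \zeta\int G_\zeta(x-y)\bigl(g(y) - g(x)\bigr)\,dy,
\]
which uses $\int\zeta G_\zeta = 1$, and bound $|g(y)-g(x)|$ via the weighted sup-norm of $g'$, combined with the weight-splitting $(1+x^2)\leq 2\bigl(1+(x-y)^2\bigr)\bigl(1+y^2\bigr)$ and the moment bounds above.

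The main obstacle is precisely this last estimate. Functions in $\widetilde{\mathcal Y}$ need not decay at infinity (only their derivatives do, and only at the slow rate $1/(1+|x|)$), so neither $(1+x^2)u$ nor $(1+x^2)g$ is controlled and the identity $u'' = \zeta u - g$ cannot be used pointwise in the naive way. The delicate step is to make simultaneous use of the cancellation in the differenced representation, the strong exponential localisation of $G_\zeta$ on the scale $|\omega|^{-1/2}$, and the slow regularity $g' = O(1/(1+|y|))$, in order to absorb the polynomial weight into moments of $G_\zeta$ of the type above, whose $|\omega|$-asymptotics then produce the stated exponent $|\omega|^{-1/2}$.
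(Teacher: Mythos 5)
Your reduction to the scalar problem, the Green's-function representation, and the estimates for $\|a_1\|_\infty$ and $\|(1+|\cdot|)a_1'\|_\infty$ are in substance identical to the paper's argument (the paper writes the convolution as the two one-sided integrals in \eqref{eq:a1} and bounds them with triangle inequalities), and these parts are fine. The genuine gap is the third term of the $\widetilde{\mathcal Z}$-norm, $\|(1+(\cdot)^2)a_1''\|_\infty$, which you explicitly leave open: this is precisely the estimate the paper does not do inline but delegates to a dedicated lemma (Lemma \ref{lem:resolvant_bound} in Appendix \ref{app:bounds}), where $a_1''$ is first rewritten purely in terms of $b_1'$ (your differenced identity is equivalent to that representation after one integration by parts) and then treated by a near-field/far-field splitting with an asymptotic evaluation of the far-field integrals. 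Moreover, the route you sketch cannot close the gap as stated: the difference bound $|g(y)-g(x)|\lesssim \|(1+|\cdot|)g'\|_\infty\,|y-x|(1+|x|)^{-1}$ (valid for $|y-x|\le |x|/2$, which is the dominant region by the localisation of $G_\zeta$) combined with $\|\,|\cdot|\,G_\zeta\|_{L^1}\lesssim|\omega|^{-3/2}$ gains exactly one factor of spatial decay, giving $|u''(x)|\lesssim |\omega|^{-1/2}(1+|x|)^{-1}\|g\|_{\widetilde{\mathcal Y}}$, whereas the weight $(1+x^2)$ requires two; the splitting $(1+x^2)\le 2\bigl(1+(x-y)^2\bigr)\bigl(1+y^2\bigr)$ does not help, because no quantity carrying the weight $(1+y^2)$ is controlled by $\|g\|_{\widetilde{\mathcal Y}}$.

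Your instinct that this last step is the crux is correct, and you should treat it as more than a technicality: convolution with the translation-invariant kernel $G_\zeta$ cannot upgrade spatial decay rates, and the hypotheses only provide $g'=\mathcal O\bigl((1+|x|)^{-1}\bigr)$. Indeed, take $g$ with $g'(\xi)=\sin(\omega_0\xi)/\sqrt{1+\xi^2}$, so that $g$ is bounded and $(1+|\xi|)|g'(\xi)|\le\sqrt2$, hence $g\in\widetilde{\mathcal Y}$. From $u''(x)=\tfrac12\int_0^\infty \e^{-\eta v}\bigl[g'(x+v)-g'(x-v)\bigr]\,\dd v$ with $\eta=\sqrt{\zeta}$ one finds $u''(x)=\cos(\omega_0 x)\,\omega_0\,(\zeta+\omega_0^2)^{-1}(1+x^2)^{-1/2}+\mathcal O\bigl((1+x^2)^{-1}\bigr)$ uniformly in $|\omega|\ge1$, so for $\omega_0\sim|\omega|^{1/2}$ the quantity $(1+x^2)|u''(x)|$ grows like $(1+x^2)^{1/2}|\omega|^{-1/2}$ along $x=2\pi n/\omega_0$ and is not bounded at all. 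Thus the second bound in \eqref{eq:resolvent_bounds_K1} cannot be obtained from the localisation of $G_\zeta$ and the $\widetilde{\mathcal Y}$-norm alone, and if you pursue a complete proof you should also scrutinise the far-field step of the paper's Lemma \ref{lem:resolvant_bound} (in particular the Jacobian in the substitution $s=\xi/x_1$, which is where the additional factor $x_1^{-1}$ is claimed), since that is exactly the point at which your sketch, and seemingly any purely kernel-based argument, stalls.
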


\begin{proof}
	The fact that $i \omega \in \textup{Res} (\Lop_1)$ for all $\omega \in \R$ such that $| \omega | \geq 1$ follows immediately from the fact that $\sigma_0(\Lop_1) = \{0\}$, as shown in Lemma \ref{lem:spectrum_K1} (in fact $i \omega \in \textup{Res} (\Lop_1)$ for all $\omega \in \R \setminus \{ 0 \}$). It therefore remains to verify the bounds in \eqref{eq:resolvent_bounds_K1}.
	
	Since $i \omega \in \textup{Res}(\Lop_1)$, the operator $i \omega \textup{Id} - \Lop_1$ is invertible. Thus for any $\bm{b} = (b_1, b_2, b_3) \in \mathcal X$ (known) and $\bm a = (a_1, a_2, a_3) \in \mathcal Z$ (unknown), the equation $(i \omega \textup{Id} - \Lop_1) \bm{a} = \bm{b}$ has a unique solution. A direct computation using the definition of $\Lop_1$ in \eqref{eq:L1} shows that $a_2 = - i b_2 / \omega$ and $a_3 = - i b_3 / \omega$, whereas $a_1$ is (uniquely) determined by the equation
	\begin{equation}
		\label{eq:a1_ode}
		a_1''(x_1) - (1 + i \omega) a_1(x_1) = - b_1(x_1) ,
	\end{equation}
	together with the requirement that $a_1 \in \widetilde{\mathcal Z}$. A somewhat lengthy but nevertheless direct computation shows that the unique bounded solution is given by
	\begin{equation}
		\label{eq:a1}
		a_1(x_1) = \frac{\e^{\eta x_1}}{2 \eta} \int_{x_1}^\infty \e^{- \eta \xi} b_1(\xi) d \xi + \frac{\e^{- \eta x_1}}{2 \eta} \int_{-\infty}^{x_1} \e^{\eta \xi} b_1(\xi) d \xi ,
	\end{equation}
	where $\eta := \sqrt{1 + i \omega}$; we refer to the Masters thesis \cite{Martinez-Sanchez2024} for details. In order to prove the first inequality in \eqref{eq:resolvent_bounds_K1}, we show that
	\begin{equation}
		\label{eq:ab_bound}
		\frac{\| \bm{a} \|_{\mathcal X}}{\| \bm{b} \|_{\mathcal X}} 
		\leq \frac{\| a_1 \|_\infty}{\| b_1 \|_\infty} + \frac{|(a_2,a_3)|}{|(b_2, b_3)|} 
		\leq \frac{c}{| \omega |}
	\end{equation}
	for some constant $c > 0$ (the left-most inequality follows directly from the definition of $\| \cdot \|_{\mathcal X}$, whereas the right-most needs to be shown). Using the expressions for $a_2$ and $a_3$ obtained above, we have that
	\begin{equation}
		\label{eq:ab_bound_Euclidean_part}
		\frac{|(a_2,a_3)|}{|(b_2, b_3)|} = 
		\frac{|(- i b_2 / \omega, - i b_3 / \omega)|}{|(b_2, b_3)|} =
		\frac{1}{| \omega |} ,
	\end{equation}
	so it remains to bound the left-most term of the middle expression in \eqref{eq:ab_bound}. One can show directly with a triangle inequality that the integrals appearing in the formula for $a_1(x_1)$ in \eqref{eq:a1} can be both be bounded by $\| b_1 \|_\infty / (2 | \eta |^2)$, so that
	\[
	\frac{\| a_1 \|_\infty}{\| b_1 \|_\infty} + \frac{|(a_2,a_3)|}{|(b_2, b_3)|} \leq \frac{1}{2 |\eta|^2} + \frac{1}{2 |\eta|^2} \leq \frac{1}{|\omega|} .
	\]
	Combining this with the bound in \eqref{eq:ab_bound_Euclidean_part} shows that \eqref{eq:ab_bound} is satisfied with $c = 2$.
	
	In order to prove the second inequality in \eqref{eq:resolvent_bounds_K1}, we need to show that
	\begin{equation}
		\label{eq:ab_bound_2}
		\frac{\| \bm{a} \|_{\mathcal Z}}{\| \bm{b} \|_{\mathcal Y}} 
		\leq \frac{\| a_1 \|_{\widetilde{\mathcal Z}}}{\| b_1 \|_{\widetilde{\mathcal Y}}} + \frac{|(a_2,a_3)|}{|(b_2, b_3)|} 
		\leq \frac{c}{| \omega |^{1/2}}
	\end{equation}
	for a constant $c > 0$ (which may of course be different from the constant $c$ appearing in the first bound in \eqref{eq:resolvent_bounds_K1}). Since the second term in the middle expression can again be bounded by \eqref{eq:ab_bound_Euclidean_part}, and $1 / | \omega | \leq 1 / | \omega |^{1/2}$ since $| \omega | \geq 1$, it remains to show that
	\[
		\frac{\| a_1 \|_{ \widetilde{\mathcal Z}}}{\| b_1 \|_{\widetilde{\mathcal Y}}} =
		\frac{\| a_1 \|_\infty + \| (1 + |\cdot|) a_1' \|_\infty + \| (1 + (\cdot)^2) a_1'' \|_\infty}{\| b_1 \|_\infty + \| (1 + |\cdot|) b_1' \|_\infty} 
		\leq \frac{\tilde c}{|\omega|^{1/2}} 
	\]
	for some constant $\tilde c > 0$. We shall actually show that there exists a $\tilde c > 0$ such that the second inequality in
	\begin{equation}
		\label{eq:ab_bound_3}
			\frac{\| a_1 \|_{ \widetilde{\mathcal Z}} }{\| b_1 \|_{ \widetilde{\mathcal Y}} } 
			\leq 
			\frac{\| a_1 \|_\infty}{\| b_1 \|_\infty} + 
			\frac{\| (1 + | \cdot |) a_1' \|_\infty}{\| (1 + |\cdot|) b_1' \|_\infty} + \frac{\| (1 + (\cdot)^2 ) a_1'' \|_\infty}{\| (1 + | \cdot |) b_1' \|_\infty} \\
			\leq \frac{\tilde c}{|\omega|^{1/2}} 
	\end{equation}
	is satisfied (the first follows directly from the preceding expression). We have already shown that $\| a_1 \|_\infty / \| b_1 \|_\infty \leq 1 / | \omega | \leq 1 / | \omega|^{1/2}$, 
	so it remains to bound the latter two terms in the middle expression.
	
	The derivatives $a_1'$ and $a_1''$ are given by
	\[
	\begin{split}
		a_1'(x_1) &= \frac{\e^{\eta x_1}}{2 \eta} \int_{x_1}^\infty \e^{-\eta \xi} b_1'(\xi) d \xi + \frac{\e^{- \eta x_1}}{2 \eta} \int_{-\infty}^{x_1} \e^{\eta \xi} b_1'(\xi) d \xi , \\
		a_1''(x_1) &= \frac{\e^{\eta x_1}}{2} \int_{x_1}^\infty \e^{-\eta \xi} b_1'(\xi) d \xi - \frac{\e^{- \eta x_1}}{2} \int_{-\infty}^{x_1} \e^{\eta \xi} b_1'(\xi) d \xi , 
	\end{split}
	\]
	where the former is obtained by differentiating \eqref{eq:a1} and integrating by parts, and the latter can be determined by rearranging \eqref{eq:a1_ode} for $a_1''(x_1)$, inserting \eqref{eq:a1} and integrating by parts; we refer to \cite[pp.43-45]{Haragus2010} for similar calculations in the context of a parabolic reaction-diffusion equation on a bounded domain. 
	Using the expression for $a_1'(x_1)$, we obtain the bound
	\[
	\begin{split}
		\| (1 + |\cdot| ) a_1' \|_\infty &= \sup_{x_1 \in \R} \| (1 + |x_1|) a_1'(x_1) \| \\
		&\leq \frac{1}{2 |\eta|} \sup_{x_1 \in \R} \left| (1 + |x_1|) \e^{\eta x_1} \int_{x_1}^\infty \e^{- \eta \xi} b_1'(\xi) d \xi \right| + \frac{1}{2 |\eta|} \sup_{x_1 \in \R} \left| (1 + |x_1|) \e^{- \eta x_1} \int_{-\infty}^{x_1} \e^{\eta \xi} b_1'(\xi) d \xi \right| \\
		&\leq \frac{\| (1 + | \cdot |) b_1' \|_\infty}{2 |\eta|} 
		\left( \e^{\eta_r x_1} \int_{x_1}^\infty \e^{- \eta_r \xi} d \xi + \e^{- \eta_r x_1} \int_{- \infty}^{x_1} \e^{ \eta_r \xi} d \xi \right) \\
		&= \frac{\| (1 + | \cdot |) b_1' \|_\infty}{| \eta | \eta_r }, 
	\end{split}
	\]
	where $\eta_r = \text{Re}(\eta)$. Using $\eta = \sqrt{1 + i \omega}$ one can show that $\eta_r = |\omega|^{1/2} / \sqrt{2}$, which yields
	\[
	\frac{\| (1 + | \cdot |) a_1' \|_\infty}{\| (1 + |\cdot|) b_1' \|_\infty} \leq \frac{\sqrt{2}}{| \omega |^{1/2}} .
	\]
	
	It therefore remains to bound the right-most term in the first line of \eqref{eq:ab_bound_3}. This can be done using the expression for $a''(x_1)$ derived above, however, we have opted to defer the details to Lemma \ref{lem:resolvant_bound}, Appendix \ref{app:bounds}, in order to preserve clarity in the overall argument within the main text. Combining the bound in Lemma \ref{lem:resolvant_bound} with the bounds obtained above proves \eqref{eq:ab_bound_3}, thereby completing the proof.
\end{proof}

Having verified Hypotheses \ref{hyp:1}-\ref{hyp:3}, we are now in a position to apply the center manifold theory developed in \cite{Haragus2010}. This leads to the following result.

\begin{lemma}
	\label{lem:K1_center_manifold}
	There exists a neighbourhood $\mathcal U_1$ of $\mathcal Q_1$ in $\mathcal Z$ such that system \eqref{eq:K1_eqns} has a 2-dimensional local center manifold
	\begin{equation}
		\label{eq:center_manifold_K1}
		\mathcal M_1^a = \left\{ (u_1, r_1, \eps_1) \in \mathcal U_1 : u_1 = \Psi_1(r_1, \eps_1) \right\} ,
	\end{equation}
	which is locally invariant and tangent to $\mathcal E_0$ at $\mathcal Q_1$, for which the reduction function $\Psi_1 \in \mathcal C^k(\mathcal E_0, \widetilde{\mathcal Z})$ has asymptotics
	\begin{equation}
		\label{eq:Psi_1}
		\Psi_1(r_1, \eps_1) = \lambda r_1 \eps_1 + \mathcal O( |(r_1, \eps_1) |^3 ) .
	\end{equation}
	Moreover, $\mathcal M_1^a$ is locally attracting: if $\bm{u}_1(0) \in \mathcal U_1$ and $\bm u_1(t_1; \bm{u}_1(0)) \in \mathcal U_1$ for all $t_1 > 0$, then there exists a `base function' ${\bm{\tilde u}_1} \in \mathcal M_1^a \cap \mathcal U_1$ and a $\gamma > 0$ such that
	\[
	\bm{u}_1(t_1; \bm{u}_1(0)) = \bm{u}_1 (t_1; \tilde{\bm{u}}_1) + \mathcal O(\e^{- \gamma t_1}) 
	\]
	as $t_1 \to \infty$.
\end{lemma}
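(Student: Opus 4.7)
The strategy is to apply the Banach space center manifold theorem from \cite{Haragus2010}, whose prerequisites have just been verified. Hypothesis \ref{hyp:1} follows from properties (i)--(ii) stated immediately before Lemma \ref{lem:spectrum_K1}, Hypothesis \ref{hyp:2} from the spectral decomposition in Lemma \ref{lem:spectrum_K1}, and Hypothesis \ref{hyp:3} from the resolvent bounds in Lemma \ref{lem:resolvent_K1}. Invoking the theorem directly produces a local center manifold of dimension $\dim \mathcal E_0 = 2$, tangent to $\mathcal E_0$ at $\mathcal Q_1$. Since $\mathcal E_0$ is spanned by the constant eigenfunctions $(0,1,0)^\transpose$ and $(0,0,1)^\transpose$, the spectral projection onto $\mathcal E_0$ selects the $(r_1,\eps_1)$-components, so the manifold can be written as a graph $u_1 = \Psi_1(r_1,\eps_1)$ with $\Psi_1 \in \mathcal C^k(\mathcal E_0, \widetilde{\mathcal Z})$, as in \eqref{eq:center_manifold_K1}.

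To obtain the expansion \eqref{eq:Psi_1}, I would substitute the ansatz $u_1 = \Psi_1(r_1,\eps_1)$ into the invariance equation derived from \eqref{eq:K1_eqns}. Using the $r_1$ and $\eps_1$ equations, the chain rule gives
\[
\partial_{t_1}\Psi_1 = -\tfrac{1}{2}r_1\eps_1\,\partial_{r_1}\Psi_1 + 2\eps_1^2\,\partial_{\eps_1}\Psi_1 = \mathcal O(|(r_1,\eps_1)|^3),
\]
since tangency to $\mathcal E_0$ forces $\Psi_1 = \mathcal O(|(r_1,\eps_1)|^2)$. Inserting the ansatz into the first component of \eqref{eq:K1_eqns} and using the fact that the leading-order candidate $\lambda r_1 \eps_1$ is spatially constant (so that $\partial_{x_1}\Psi_1$ and $\partial_{x_1}^2\Psi_1$ contribute only at higher order), one obtains the algebraic balance
\[
0 = -\Psi_1 + \lambda r_1\eps_1 + \mathcal O(|(r_1,\eps_1)|^3),
\]
where the cubic remainder collects $\tfrac{1}{2}\eps_1\Psi_1$, $\Psi_1^3$ and $r_1\mathcal R_1(\Psi_1,r_1,\eps_1)$. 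Matching quadratic terms gives \eqref{eq:Psi_1}.

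Local attractivity then follows from the attractivity statement accompanying the center manifold theorem (cf.\ the corresponding formulation in \cite{Haragus2010}), which applies because $\sigma_-(\Lop_1) \subseteq (-\infty,-1]$ is separated from the imaginary axis. This yields the existence of a base function $\tilde{\bm u}_1 \in \mathcal M_1^a \cap \mathcal U_1$ and an exponential rate $\gamma > 0$, which can be chosen as any value strictly less than the spectral gap $1$.

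The main obstacle I foresee is ensuring that the spatially inhomogeneous terms in $\Nop_1$, in particular the transport term $-\tfrac{1}{2}\eps_1 x_1 \partial_{x_1} u_1$, are compatible with the polynomially weighted spaces $\widetilde{\mathcal Y}$ and $\widetilde{\mathcal Z}$ well enough for the center manifold reduction to produce a $\Psi_1$ valued in $\widetilde{\mathcal Z}$. This compatibility is precisely what dictated the choice of weights in \eqref{eq:tilde_Z} and \eqref{eq:tilde_Y}; since the leading-order reduction is spatially constant and higher-order corrections only feel the transport term through factors of $\eps_1$ multiplied by decaying derivatives, the reduction closes in $\widetilde{\mathcal Z}$ consistently with the $\mathcal C^k$ regularity provided by the theorem.
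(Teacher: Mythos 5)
Your proposal is correct and follows essentially the same route as the paper: existence, tangency and attractivity are obtained by invoking the center manifold theorems of \cite{Haragus2010} after the three hypotheses verified beforehand, and the expansion \eqref{eq:Psi_1} is extracted from the invariance equation, where the paper makes the (equivalent) observation that $\Psi_1$ is spatially independent because $r_1,\eps_1$ are, so that the reduced dynamics are governed by the ODE system and a power-series ansatz yields $\Psi_1 = \lambda r_1 \eps_1 + \mathcal O(|(r_1,\eps_1)|^3)$.
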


\begin{proof}
	The existence of a 2-dimensional local center manifold $\mathcal M_1^a$ of the form \eqref{eq:center_manifold_K1} which is tangent to $\mathcal E_0$ at $\mathcal Q_1$ follows from \cite[Thm.~2.9]{Haragus2010}, and the attractivity property follows from \cite[Thm.~3.22]{Haragus2010}.
	
	It remains to show that the reduction function $\Psi_1$ has the asypmtotics in \eqref{eq:Psi_1}. First, we note that $\Psi_1(r_1,\eps_1)$ is spatially independent; this is a direct consequence of the fact that the variables $r_1$ and $\eps_1$ are spatially independent. It follows that $\Psi_1$ satisfies the ODE system
	\begin{equation}
		\label{eq:K1_ODEs}
		\begin{split}
			\Psi_1' &= - \left( 1 -\frac{\eps_1}{2} \right) \Psi_1 - \Psi_1^3 + r_1 \left( \lambda \eps_1 + \mathcal R_1 (\Psi_1, r_1, \eps_1) \right) , \\
			r_1' &= - \frac{1}{2} r_1 \eps_1 , \\
			\eps_1' &= 2 \eps_1^2 ,
		\end{split}
	\end{equation}
	where the prime denotes differentiation with respect to $t_1$. Substituting a power series ansatz for $\Psi_1(r_1,\eps_1)$ and equating like powers in the corresponding invariance equation leads to the asymptotics in \eqref{eq:Psi_1}.
\end{proof}

The $2$-dimensional local center manifold $\mathcal M_1^a$ is sketched in the global geometry of the blown-up space in Figure \ref{fig:center_manifolds}. By Lemma \ref{lem:K1_center_manifold}, solutions to the PDE system \eqref{eq:K1_eqns} which have initial conditions in $\mathcal V_1$ are strongly attracted to the local center manifold $\mathcal M_1^a$, after which they track solutions which are determined by the $2$-dimensional ODE system
\[
\begin{split}
	r_1' &= - \frac{1}{2} r_1 \eps_1 , \\
	\eps_1' &= 2 \eps_1^2 .
\end{split}
\]
This is a significant simplification, especially because this system can be solved explicitly by direct integration. Here it is worthy to note that the ODE system \eqref{eq:K1_ODEs} is precisely the ODE system which appears in within the entry chart analysis of the corresponding ODE problem in \cite{Krupa2001c} (up to higher order corrections which follow from the presence of additional terms in the slow equation in \cite{Krupa2001c}). The primary conceptual distinction between the center manifold appearing in the analysis in \cite{Krupa2001c} and the center manifold $\mathcal M_1^a$, is that $\mathcal M_1^a$ is embedded and attracting in the (infinite-dimensional) Banach space $\widetilde{\mathcal Z}$.

\begin{figure}[t!]
	\centering
	\includegraphics[scale=0.35]{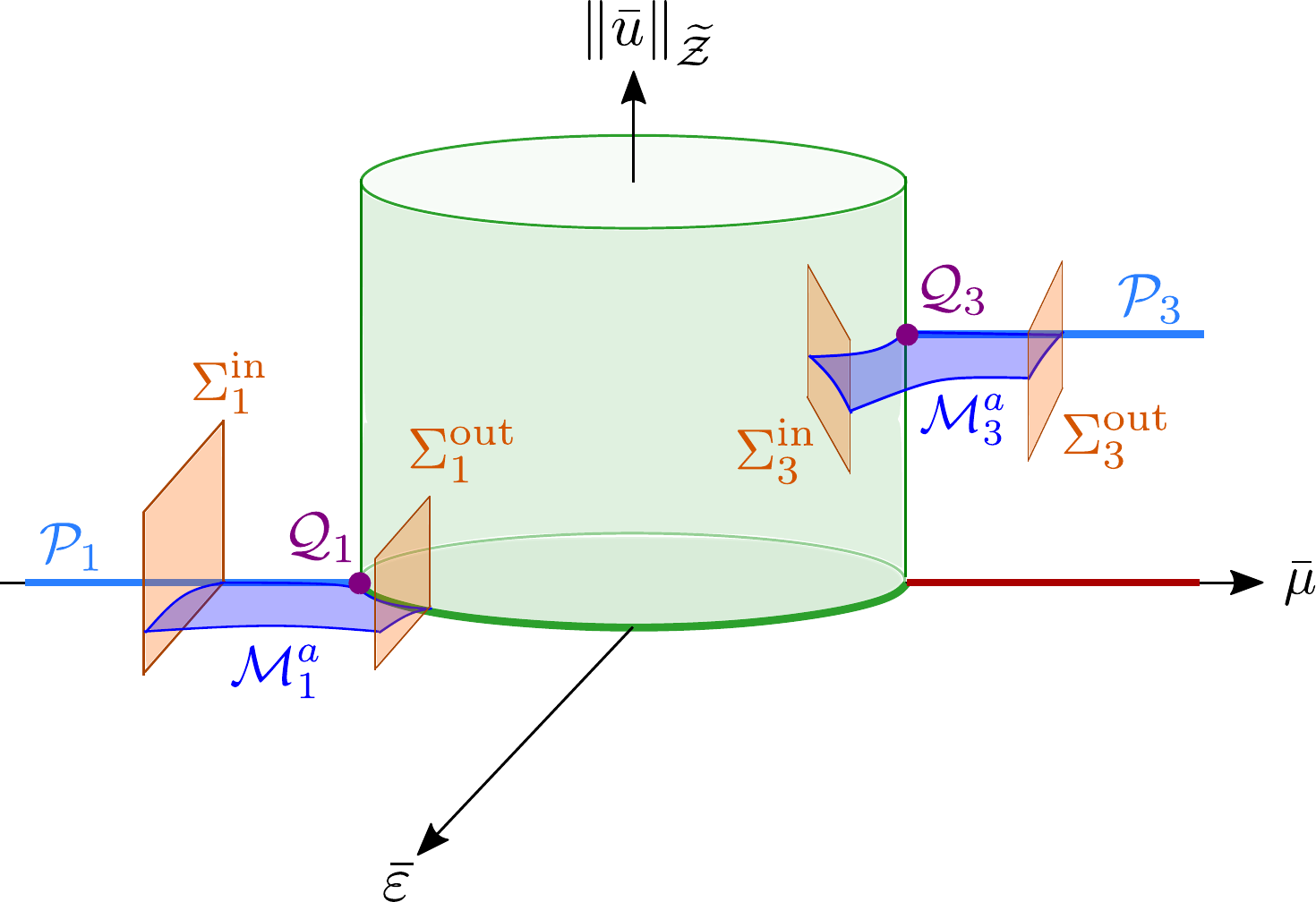}
	\caption{Sketch of the important objects appearing in the local analysis in charts $K_1$ and $K_3$. In $K_1$ we identify the equilibrium $\mathcal Q_1$ (shown in purple), which lies at the intersection of the blow-up cylinder with the branch of constant steady states $\mathcal P_1$ (shown in light blue). Lemma \ref{lem:K1_center_manifold} asserts the existence of a $2$-dimensional local center manifold $\mathcal M_1^a$ near $\mathcal Q_1$, shown here in blue, which coincides with $\mathcal P_1$ when restricted to $\eps_1 = 0$. The entry and exit sections $\Sigma_1^{\textup{in}}$ and $\Sigma_3^{\textup{out}}$ which are used in order to define the local transition map $\pi_1$ are shown in orange. We also sketch the corresponding objects in chart $K_3$. Note that we simply write $\mathcal Q_3$, $\mathcal P_3$, $\mathcal M_3^a$, $\Sigma_3^{\textup{in}}$ and $\Sigma_3^{\textup{out}}$ to indicate either $\mathcal Q_3^+$, $\mathcal P_3^+$, $\mathcal M_3^{a,+}$, $\Sigma_3^{\textup{in},+}$ and $\Sigma_3^{\textup{out},+}$ or $\mathcal Q_3^-$, $\mathcal P_3^-$, $\mathcal M_3^{a,-}$, $\Sigma_3^{\textup{in},-}$ and $\Sigma_3^{\textup{out},-}$, i.e.~we do not distinguish between `$\pm$ cases'. This is because we plot the norm on the vertical axis (as opposed to the variable $\bar u$ itself), and therefore expect a similar picture in each case.}
	\label{fig:center_manifolds}
\end{figure}

It remains to use Lemma \ref{lem:K1_center_manifold} in order to describe the key dynamics in chart $K_1$. Our aim is to describe the transition map $\pi_1 : \Sigma_1^{\textup{in}} \to \Sigma_1^{\textup{out}}$ induced by the forward evolution of initial conditions in
\[
\Sigma_1^{\textup{in}} := \left\{(u_1, \nu, \eps_1) : \| u_1 \|_{\widetilde{\mathcal Z}} \leq \beta, \eps_1 \in [0, \delta] \right\}
\]
up to
\[
\Sigma_1^{\textup{out}} := \left\{(u_1, r_1, \delta) : \| u_1 \|_{\widetilde{\mathcal Z}} \leq \beta, r_1 \in [0, \nu] \right\} ,
\]
see Figure \ref{fig:center_manifolds}. Note that $\Sigma_1^{\textup{in}}$ can be chosen to contain the set $\Delta^{\textup{in}} \times [0,\eps_0]$ after `blowing down' if we set $\nu = \rho^{1/2}$, $\delta = \eps_0 / \rho^2$ and decrease the size of $\chi$ if necessary ($\| u \|_{\widetilde{\mathcal Z}}$ and $\| u_1 \|_{\widetilde{\mathcal Z}}$ can be compared using $x_1 = \nu x$ and the scaling properties of the norm described in Lemma \ref{lem:norm_equivalence}, Appendix \ref{app:wieghted_spaces}). 

\begin{proposition}
	\label{prop:K1_dynamics}
	For fixed but sufficiently small $\beta, \delta, \rho > 0$, $\pi_1$ is well-defined and given by
	\[
	\pi_1 : 
	\begin{pmatrix}
		u_1 \\
		\nu \\
		\eps_1
	\end{pmatrix} \mapsto 
	\begin{pmatrix}
		\Psi_1 \left(\nu (\eps_1 / \delta)^{1/4}, \delta \right) + \mathcal O(\e^{-\gamma / 2 \eps_1}) \\
		\nu (\eps_1 / \delta)^{1/4} \\
		\delta
	\end{pmatrix} ,
	\]
	where $\Psi_1$ is the function which defines the center manifold $\mathcal M_1^a$, and $\gamma > 0$ is the constant which appears in Lemma \ref{lem:K1_center_manifold}.
\end{proposition}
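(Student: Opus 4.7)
The plan is to exploit the fact that the $(r_1,\eps_1)$-dynamics in \eqref{eq:K1_ODEs} decouple from $u_1$ and can be integrated explicitly, while the $u_1$-component is controlled via the attractivity statement in Lemma \ref{lem:K1_center_manifold}. First I would solve $\eps_1' = 2\eps_1^2$ to obtain $\eps_1(t_1) = \eps_1(0)/(1-2\eps_1(0)t_1)$, and then observe that dividing $r_1' = -r_1 \eps_1/2$ by this equation yields $\dd(\ln r_1)/\dd(\ln \eps_1) = -1/4$, so $r_1 \eps_1^{1/4}$ is a conserved quantity. A trajectory starting at $(u_1,\nu,\eps_1) \in \Sigma_1^{\textup{in}}$ therefore reaches $\Sigma_1^{\textup{out}}$ at time
\[
T_1 = \frac{1}{2\eps_1} - \frac{1}{2\delta}
\]
with terminal coordinate $r_1(T_1) = \nu(\eps_1/\delta)^{1/4}$, which already accounts for the $r_1$- and $\eps_1$-components of $\pi_1$.

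Second, I would verify that the full PDE trajectory remains in the center-manifold neighbourhood $\mathcal U_1$ throughout the transit interval $[0,T_1]$. The $(r_1,\eps_1)$-component automatically stays in the box $\{r_1 \in [0,\nu], \eps_1 \in [0,\delta]\}$, which is contained in the projection of $\mathcal U_1$ provided $\nu,\delta$ are chosen small enough. To control $\|u_1(t_1)\|_{\widetilde{\mathcal Z}}$, I would combine the spectral gap from Lemma \ref{lem:spectrum_K1} (which yields exponential decay of the semigroup generated by $\Lop_1$ on the stable subspace) with the fact that the nonlinear terms in $\Nop_1(\bm u_1)$ are of order $\eps_1$ or $r_1$ on $\mathcal U_1$, allowing a Gronwall-type argument to bound $\|u_1(t_1)\|_{\widetilde{\mathcal Z}}$ uniformly by a constant multiple of the initial datum $\chi$ (after shrinking $\beta,\delta,\nu$ if necessary).

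Third, with the trajectory confined to $\mathcal U_1$, I would invoke the attractivity assertion in Lemma \ref{lem:K1_center_manifold} to obtain a base point $\tilde{\bm u}_1 \in \mathcal M_1^a \cap \mathcal U_1$ and a rate $\gamma > 0$ such that
\[
\bm u_1(t_1;\bm u_1(0)) = \bm u_1(t_1;\tilde{\bm u}_1) + \mathcal O(\e^{-\gamma t_1}).
\]
Because the center manifold is graph-like over $(r_1,\eps_1)$ and the reduced flow is governed by the same explicit ODE solved above, the base solution reaches $\Sigma_1^{\textup{out}}$ at the same time $T_1$ with $u_1$-component equal to $\Psi_1(\nu(\eps_1/\delta)^{1/4},\delta)$. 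Evaluating the attractivity estimate at $t_1 = T_1$ and absorbing the factor $\e^{\gamma/(2\delta)}$ into the implicit constant (or, equivalently, slightly decreasing $\gamma$) yields the claimed error $\mathcal O(\e^{-\gamma/2\eps_1})$ in the $u_1$-component.

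The main obstacle is the second step: establishing that the PDE solution indeed remains in $\mathcal U_1$ over the long transit time $T_1 = \mathcal O(1/\eps_1)$. The subtlety is that $\Nop_1$ contains the transport-like term $\tfrac{1}{2}\eps_1 x_1 \partial_{x_1} u_1$, and the $\widetilde{\mathcal Z}$-norm involves polynomial weights; one must verify that the smallness of $\eps_1$ beats the growth contribution of this term across the entire interval $[0,T_1]$. Once this a priori bound is in place, the remaining assertions reduce to the explicit integration of the $(r_1,\eps_1)$-system and a direct application of the center-manifold theorem.
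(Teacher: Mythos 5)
Your proposal is correct and follows essentially the same route as the paper's proof: integrate the decoupled $(r_1,\eps_1)$-equations to obtain $T_1 = \tfrac{1}{2\eps_1(0)} - \tfrac{1}{2\delta}$ and $r_1(T_1) = \nu(\eps_1/\delta)^{1/4}$, then apply the attractivity estimate of Lemma \ref{lem:K1_center_manifold} at $t_1 = T_1$ and absorb the fixed factor $\e^{\gamma/2\delta}$ into the $\mathcal O$-constant. The additional care you devote to confining the trajectory to $\mathcal U_1$ over the $\mathcal O(1/\eps_1)$ transit is more detail than the paper gives, which subsumes that point in the well-definedness claim for sufficiently small $\beta,\delta,\nu$.
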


\begin{proof}
	The ODEs for $(r_1, \eps_1)$ decouple, and can be solved by direct integration. We also need an estimate for the transition time $T_1$, which can be determined using the constraint $\eps_1(t_1 = T_1) = \delta$. We obtain
	\[
	T_1 = \frac{1}{2 \eps_1(0)} - \frac{1}{2 \delta} .
	\]
	Using the attractivity of the center manifold $\mathcal M_1^a$, we obtain
	\[
	u_1(T_1) = \Psi_1 \left(r_1(T_1), \eps_1(T_1) \right) + \mathcal O \left( \e^{- \gamma T_1} \right) ,
	\]
	which yields the desired result after substituting the values for $T_1$, $r_1(T_1)$ and $\eps_1(T_1)$.
\end{proof}

We note that the ``$\mathcal O$'' notation in Proposition \ref{prop:K1_dynamics} and its proof should be understood with respect to the $\widetilde{\mathcal Z}$ norm. In particular, Proposition \ref{prop:K1_dynamics} shows that solutions of system \eqref{eq:K1_eqns} are exponentially close to the spatially independent solutions in $\mathcal M_1^a \cap \Sigma_1^{\textup{out}}$ when they leave a neighbourhood of $\mathcal Q_1$ via $\Sigma_1^{\textup{out}}$.

\subsubsection{Dynamics in the exit chart $K_3$}

We now consider the dynamics in the exit chart $K_3$. Calculations which are similar to those which led to the system \eqref{eq:K1_eqns} in chart $K_1$, involving a state-dependent transformation of time and space defined via the relations
\[
\frac{\dd t_3}{\dd t} = r_3^2, \qquad \frac{x_3}{x} = r_3 ,
\]
lead to the following system in chart $K_3$:
\begin{equation}
	\label{eq:K3_eqns}
	\frac{\dd \bm{u}_3}{\dd t_3} = \Lop_3 \bm{u}_3 + \Nop_3(\bm{u}_3),
\end{equation}
where $\bm{u}_3 = (u_3, r_3, \eps_3)^\transpose \in \mathcal Z$, and $\Lop_3$, 
$\Nop_3$ 
are linear respectively nonlinear operators defined by
\[
\Lop_3 \bm{u}_3 = 
\begin{pmatrix}
	\partial_{x_3}^2 u_3 + u_3 \\
	0 \\
	0
\end{pmatrix}
\]
and
\[
	N_3(\bm{u}_3) = 
	\begin{pmatrix}
		- \frac{1}{2} \eps_3 u_3 - u_3^3 + \lambda r_3 \eps_3 - \frac{1}{2} \eps_3 x_3 \partial_{x_3} u_3 + r_3 \mathcal R_3(u_3, r_3, \eps_3) \\
		\frac{1}{2} r_3 \eps_3 \\
		- 2 \eps_3^2
	\end{pmatrix},
\]
where $\mathcal R_3 (u_3, r_3, \eps_3) = r_3^{-4} \mathcal R(r_3 u_3, r_3^2, r_3^4 \eps_3)$, which is well-defined and $\mathcal O(u_3^2, r_3 u_3, r_3^2 \eps_3)$ due to the asymptotics in Lemma \ref{lem:normal_forms}, Assertion (ii).

Here we identify two distinct branches of spatially homogeneous steady states in system \eqref{eq:K3_eqns}, namely
\begin{equation}
	\label{eq:P3}
	\mathcal P_3^\pm = \left\{ (\pm 1 + \mathcal O(r_3), r_3, 0) : r_3 \in [0, \nu] \right\} 
\end{equation}
which, for $r_3 > 0$, correspond to the branches of the critical manifolds along $\{ (\phi_\pm(\mu,0),\mu,0) : \mu > 0 \}$ (see \eqref{eq:slow_manifolds_pitchfork}) in the extended system \eqref{eq:main_system_extended} under the blow-up map $\Phi$, viewed in chart $K_3$.

\begin{remark}
	\label{rem:K3_steady_states}
	System \eqref{eq:K3_eqns} also has additional steady states, including a family of (unstable) spatially homogenous steady states for $u_3 = 0$, $\eps_3 = 0$ and $r_3 \geq 0$, a family of spatially oscillatory steady states, and two distinct standing waves (all within $r_1 = \eps_1 = 0$). We do not consider these further, however, since they will not play a role in the analysis.
\end{remark}

Similarly to the analysis in chart $K_1$, we can apply center manifold theory in a neighbourhood of the points $\mathcal Q_3^\pm := (\pm 1,0,0) \in \mathcal P_3^\pm$, which lie within the intersection of $\mathcal P_3^\pm$ with the blow-up manifold (where $r_3 = 0$).

\begin{lemma}
	\label{lem:K3_center_manifold}
	There are neighbourhoods $\mathcal U_3^\pm$ of $\mathcal Q_3^\pm$ in $\mathcal Z$ such that system \eqref{eq:K3_eqns} has 2-dimensional local center manifolds
	\begin{equation}
		\label{eq:center_manifold_K3}
		\mathcal M_3^{a,\pm} = \left\{ (u_3, r_3, \eps_3) \in \mathcal U_3 : u_3 = \Psi_3^\pm(r_3, \eps_3) \right\} ,
	\end{equation}
	which are locally invariant and tangent to $\mathcal E_0(\mathcal Q_3^\pm)$ at $\mathcal Q_3^\pm$, for which the reduction functions $\Psi_3^\pm \in \mathcal C^k(\mathcal E_0(\mathcal Q_3^\pm), \widetilde{\mathcal Z})$ have the asymptotics
	\begin{equation}
		\label{eq:Psi_3}
		\Psi_3^\pm(r_3, \eps_3) = \pm 1 \mp \frac{1}{4} \eps_3 + \frac{1}{2} \mathcal R_3 (\pm 1,0,0) r_3 
		+ \mathcal O( \|(r_3, \eps_3) \|^2 ) .
	\end{equation}
	Moreover, both $\mathcal M_3^{a,\pm}$ are locally attracting: if $\bm{u}_3(0) \in \mathcal U_3^\pm$ and $\bm u_3(t_3; \bm{u}_3(0)) \in \mathcal U_3^\pm$ for all $t_3 > 0$, then there exists a `base function' ${\bm{\tilde u_3}} \in \mathcal M_3^{a,\pm} \cap \mathcal U_1^\pm$ and a $\gamma > 0$ such that
	\[
	\bm{u}_3(t_3; \bm{u}_3(0)) = \bm{u}_3 (t_3; \tilde{\bm{u}}_3) + \mathcal O(\e^{- \gamma t_3}) 
	\]
	as $t_3 \to \infty$.
\end{lemma}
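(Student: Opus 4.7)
The plan is to follow the same scheme as in the proof of Lemma \ref{lem:K1_center_manifold}, with the main adjustment being that the two equilibria $\mathcal Q_3^\pm$ sit away from the origin of $\mathcal Z$. I would first translate by $\tilde u_3 := u_3 \mp 1$, mapping $\mathcal Q_3^\pm$ to the origin of a Banach space of the same type. A direct expansion of the nonlinear term gives
\[
u_3 - u_3^3 = -2 \tilde u_3 \mp 3 \tilde u_3^2 - \tilde u_3^3,
\]
in both $\pm$ cases, so the linearisation at $\mathcal Q_3^\pm$ is governed by
\[
\widetilde \Lop_3^\pm \bm{\tilde u}_3 =
\begin{pmatrix} \partial_{x_3}^2 \tilde u_3 - 2 \tilde u_3 \\ 0 \\ 0 \end{pmatrix},
\]
which differs from $\Lop_1$ only by the replacement of the coefficient $-1$ with $-2$ in the first component. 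The nonlinear remainder, after translation, is then of the same structural type as the one treated in chart $K_1$, and in particular satisfies $\Nop_3(\bm 0) = \bm 0$ and $\D \Nop_3(\bm 0) = \bm 0$ in the translated coordinates.

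Next I would verify Hypotheses \ref{hyp:1}--\ref{hyp:3} for $\widetilde \Lop_3^\pm$ and the translated nonlinearity. Hypothesis \ref{hyp:1} is immediate by the same arguments as in $K_1$. For Hypothesis \ref{hyp:2}, I would repeat the spectral computation of Lemma \ref{lem:spectrum_K1} verbatim with the constant $1$ replaced by $2$, obtaining
\[
\sigma(\widetilde \Lop_3^\pm) = (-\infty, -2] \cup \{0\},
\]
with the $0$-eigenvalue corresponding to the $(r_3, \eps_3)$ directions and spanning a $2$-dimensional center subspace $\mathcal E_0(\mathcal Q_3^\pm)$; the spectral gap is now of size $2$ rather than $1$, which accounts for the improved exponent $\gamma \in (0,2)$ claimed in observation (c) of Section \ref{sec:main_results}. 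For Hypothesis \ref{hyp:3}, the resolvent problem reduces to solving $a_1'' - (2 + i\omega) a_1 = -b_1$, whose bounded solution has the same integral representation as \eqref{eq:a1} with $\eta := \sqrt{2 + i \omega}$; the bounds derived in Lemma \ref{lem:resolvent_K1} and Lemma \ref{lem:resolvant_bound} transfer without substantive change, since $\mathrm{Re}(\sqrt{2 + i \omega}) \sim |\omega|^{1/2}/\sqrt 2$ for large $|\omega|$ just as in the $K_1$ case.

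Having verified the hypotheses, Theorem 2.9 of \cite{Haragus2010} produces $2$-dimensional local center manifolds $\mathcal M_3^{a,\pm}$ of the form \eqref{eq:center_manifold_K3}, tangent to $\mathcal E_0(\mathcal Q_3^\pm)$ at $\mathcal Q_3^\pm$, and Theorem 3.22 of \cite{Haragus2010} yields the attractivity statement. To extract the expansion \eqref{eq:Psi_3}, I would note that $\Psi_3^\pm$ is spatially independent (because $r_3$ and $\eps_3$ are), substitute the ansatz $\Psi_3^\pm(r_3,\eps_3) = \pm 1 + a^\pm r_3 + b^\pm \eps_3 + \mathcal O(|(r_3,\eps_3)|^2)$ into the reduced ODE system on $\mathcal M_3^{a,\pm}$ (the $K_3$-analogue of \eqref{eq:K1_ODEs}), and match powers. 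Using
\[
\Psi_3^\pm - (\Psi_3^\pm)^3 = -2(a^\pm r_3 + b^\pm \eps_3) + \mathcal O(|(r_3,\eps_3)|^2),
\]
together with $-\tfrac12 \eps_3 \Psi_3^\pm = \mp \tfrac12 \eps_3 + \mathcal O(|(r_3,\eps_3)|^2)$ and $r_3 \mathcal R_3(\Psi_3^\pm, r_3, \eps_3) = r_3 \mathcal R_3(\pm 1,0,0) + \mathcal O(|(r_3,\eps_3)|^2)$, the coefficients of $r_3$ and $\eps_3$ on the two sides of the invariance equation yield $a^\pm = \tfrac12 \mathcal R_3(\pm 1,0,0)$ and $b^\pm = \mp \tfrac14$, reproducing \eqref{eq:Psi_3}.

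I do not anticipate any genuinely new conceptual obstacle, since the translation reduces the problem to a near-replica of chart $K_1$. The one point requiring care is the symmetric treatment of $\mathcal Q_3^+$ and $\mathcal Q_3^-$ and the correct tracking of signs when cubing $\pm 1 + \tilde u_3$, particularly in the derivation of the $\mp \tfrac14 \eps_3$ term in \eqref{eq:Psi_3}. Verifying the second resolvent bound in detail (the analogue of Lemma \ref{lem:resolvant_bound} with $\eta = \sqrt{2 + i \omega}$) is the most technically involved step, but amounts to a direct transcription of the existing estimate.
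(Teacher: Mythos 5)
Your overall route is the paper's: translate $\mathcal Q_3^\pm$ to the origin, verify Hypotheses \ref{hyp:1}--\ref{hyp:3} for the shifted operator $\partial_{x_3}^2 - 2$, invoke Theorems 2.9 and 3.22 of \cite{Haragus2010}, and extract \eqref{eq:Psi_3} from the spatially independent reduced ODE; the spectral picture $(-\infty,-2]\cup\{0\}$, the resolvent bound with $\eta=\sqrt{2+i\omega}$, and the coefficient matching $a^\pm=\tfrac12\mathcal R_3(\pm1,0,0)$, $b^\pm=\mp\tfrac14$ all agree with the paper. However, there is a genuine gap in the middle step: after the bare translation $\tilde u_3 = u_3 \mp 1$ it is \emph{not} true that $\D\Nop_3(\bm 0)=\bm 0$. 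The term $-\tfrac12\eps_3 u_3$ contributes $\mp\tfrac12\eps_3$ and the term $r_3\mathcal R_3(u_3,r_3,\eps_3)$ contributes $\mathcal R_3(\pm1,0,0)\,r_3$ to the first component; both are linear in $(r_3,\eps_3)$ and generically nonzero. This is exactly where chart $K_3$ differs structurally from chart $K_1$: there the base state $u_1=0$ renders the analogous terms quadratic, here the base state $u_3=\pm1$ does not. Consequently Hypothesis \ref{hyp:1}(ii) fails for your splitting (equivalently, if you instead absorb these terms into the linear operator, it is no longer the diagonal operator whose spectrum and resolvent you estimate).

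The paper removes this obstruction with the affine change of variables \eqref{eq:K3_coord_change}, $\tilde u_3 = u_3 - 1 + \tfrac14\eps_3 - \tfrac12\mathcal R_3(1,0,0) r_3$ (and its analogue at $\mathcal Q_3^-$), which simultaneously translates the equilibrium and diagonalises the linear part, the offending cross terms being cancelled against $-2\tilde u_3$. Note that the correction coefficients are precisely the linear coefficients you compute at the end of your argument, so the repair is implicitly contained in your own calculation; compare also the transcritical entry-chart proof (Lemma \ref{lem:K1_center_manifold_transcritical}), where the same preliminary diagonalisation $\tilde u_1 = u_1 - \lambda\eps_1$ is carried out before the center manifold theorem is applied. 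With this amendment the rest of your proposal goes through essentially verbatim and coincides with the paper's proof.
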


\begin{proof}
	We prove the statement pertaining to the center manifold $\mathcal M_3^{a,+}$ in a neighbourhood of $\mathcal Q_3^+$; the proof of the statement pertaining to $\mathcal M_3^{a,-}$ is analogous.
	
	We start by applying the coordinate transformation
	\begin{equation}
		\label{eq:K3_coord_change}
		\tilde u_3 = u_3 - 1 + \frac{1}{4} \eps_3 - \frac{1}{2} \mathcal R_3 (1,0,0) r_3 ,
	\end{equation}
	which (i) translates the point $\mathcal Q_3^+$ to the origin, and (ii) diagonalises the linear part. Explicitly, this leads to
	\[
	\frac{\dd \tilde{\bm{u}}_3}{\dd t_3} = \widetilde{L}_3 \tilde{\bm{u}}_3 + \widetilde \Nop_3(\tilde{\bm{u}}_3)
	\]
	where $\tilde{\bm{u}}_3 = (\tilde u_3, r_3, \eps_3)$,
	\[
	\widetilde \Lop_3 \tilde{\bm{u}}_3 = 
	\begin{pmatrix}
		\partial_{x_3}^2 \tilde u_3 - 2 \tilde u_3 \\
		0 \\
		0
	\end{pmatrix} ,
	\]
	and
	\[
		N_3(\tilde{\bm{u}}_3) = 
		\begin{pmatrix}
			- 3 \tilde u_3^2 - \tilde u_3^3 - \frac{1}{2} \eps_3 x_3 \partial_{x_3} u_3 + \mathcal O(\tilde u_3 \eps_3, r_3 \tilde u_3, r_3 \eps_3^2, r_3 \eps_3, r_3^2 ) \\
			\frac{1}{2} r_3 \eps_3 \\
			- 2 \eps_3^2
		\end{pmatrix}.
	\]
	
	In order to apply center manifold theory, we proceed by verifying Hypotheses \ref{hyp:1}-\ref{hyp:3}, as in the $K_1$ analysis above. Straightforward calculations show that
	\begin{enumerate}
		\item[(i)] $\widetilde \Lop_3 \in \mathcal L(\mathcal Z, \mathcal X)$;
		\item[(ii)] There exists a $k \geq 1$ and a neighbourhood $\mathcal V_3^+$ of \SJ{$\widetilde{\mathcal Q}_3^+ := (0,0,0)$} in $\mathcal Z$ such that $\widetilde \Nop_3 \in \mathcal C^k(\mathcal V_3, \mathcal Y)$, $\widetilde \Nop_3(\widetilde{\mathcal Q}_3^+) = \bm{0}$ and $\D \widetilde \Nop_3(\widetilde{\mathcal Q}_3^+) = \bm{0}$,
	\end{enumerate}
	showing that Hypothesis \ref{hyp:1} is satisfied.
	
	In order to verify Hypothesis \ref{hyp:2}, we need to calculate the spectrum $\sigma(\widetilde L_3)$. Direct calculations that are analogous to those appearing in the proof of Lemma \ref{lem:spectrum_K1} show that $\sigma(\widetilde L_3) = \sigma_-(\widetilde L_3) \cup \sigma_0(\widetilde L_3)$, where
	\[
	\sigma_-(\widetilde L_3) = (-\infty, -2], \qquad 
	\sigma_0(\widetilde L_3) = \{0\} .
	\]
	As in Lemma \ref{lem:spectrum_K1}, the zero eigenvalue has algebraic multiplicity $2$ and the corresponding center subspace $\mathcal E_0$ is $2$-dimensional and spanned by the constant eigenfunctions $(0,1,0)^\transpose$ and $(0,0,1)^\transpose$. Thus, Hypothesis \ref{hyp:2} is satisfied.
	
	Hypothesis \ref{hyp:3} can be verified by proving a result analogous to Lemma \ref{lem:resolvent_K1} for the resolvent associated with $\widetilde L_3$. This too can be done via analogous arguments (due to the very similar structure of $\Lop_1$ and $\widetilde L_3$), so we omit the details, referring to the proof of Lemma \ref{lem:resolvent_K1} for details.
	
	Thus, all three hypotheses are satisfied, allowing us to infer the existence of a $2$-dimensional local center manifold which can be written as a graph of the form $\tilde u_3 = \widetilde \Psi_3^+(r_3,\eps_3)$ (existence and attractivity follow from \cite[Thm.~2.9]{Haragus2010} and \cite[Thm.~3.22]{Haragus2010} respectively). Undoing the coordinate transformation in \eqref{eq:K3_coord_change} yeilds the center manifold $\mathcal M_3^{a,+}$ described in the statement of the lemma. 
	
	It remains to show that the reduction function $\Psi_3^+$ has the asypmtotics in \eqref{eq:Psi_3}. First, we note that $\Psi_3^+(r_3,\eps_3)$ is spatially independent; this is a direct consequence of the fact that the variables $r_3$ and $\eps_3$ are spatially independent. It follows that $\Psi_3^+$ satisfies the ODE system
	\begin{equation}
		\label{eq:K3_ODEs}
		\begin{split}
			{\Psi^+_3}' &= \left( 1 -\frac{\eps_3}{2} \right) \Psi^+_3 - {\Psi^+_3}^3 + r_3 \left( \lambda \eps_3 + \mathcal R_3 (\Psi^+_3, r_3, \eps_3) \right) , \\
			r_3' &= \frac{1}{2} r_3 \eps_3 , \\
			\eps_3' &= - 2 \eps_3^2 .
		\end{split}
	\end{equation}
	Substituting a power series ansatz for $\Psi_3(r_3,\eps_3)$ and equating like powers in the corresponding invariance equation leads to the asymptotics in \eqref{eq:Psi_3}.
\end{proof}

For a graphical representation we refer again to Figure \ref{fig:center_manifolds}. Note that only one of the two center manifolds is visible in Figure \ref{fig:center_manifolds}, due to the fact that we plot the norm of $u_3$ on the vertical axis (which is necessarily positive). Lemma \ref{lem:K3_center_manifold} implies that solutions of system \eqref{eq:K3_eqns} with initial conditions in $\mathcal V_3^\pm$ are strongly attracted to the local center manifold $\mathcal M_3^{a,\pm}$, after which they track solutions of a $2$-dimensional system which in each case is governed by the planar ODE system
\[
\begin{split}
	r_3' &= \frac{1}{2} r_3 \eps_3 , \\
	\eps_3' &= - 2 \eps_3^2 .
\end{split}
\]
As in the $K_1$ analysis, this system can be solved directly, and the center manifolds $\mathcal M_3^{a,\pm}$ can be identified with the center manifolds which appear in \cite{Krupa2001c} (this time in the chart $K_3$ analysis), except that in our case, they are embedded and attracting within the phase space of the PDE system \eqref{eq:K3_eqns}.

\begin{remark}
	\label{rem:CM_restriction_pitchfork}
	One can show that $\mathcal M_3^{a,\pm}|_{\eps_3 = 0} = \mathcal P_3^\pm$, where $\mathcal P_3^{a,\pm}$ are the critical manifolds identified in \eqref{eq:P3} which coincide with the critical manifolds $\mathcal S_a^\pm \times \{0\}$ after blow-down on $\{r_3 > 0\}$ (see Figure \ref{fig:center_manifolds}). Using this fact together with the fact that (i) the blow-up transformation is a diffeomorphism on $\{r_3 > 0\}$, and (ii) the formal series expansions for the center manifold $\mathcal M_3^{a,\pm}$ and the slow manifold $\mathcal S_{a}^\pm$ are unique, it follows that $\mathcal M_3^{a,\pm}|_{r_3 > 0}$ coincides with the slow manifold $\mathcal S_{a}^\pm \times \{0\}$ after blowing down. A similar property is true in chart $K_1$, however, we choose to emphasise the point here instead because it will play a more important role in the proof.
\end{remark}

We can use Lemma \ref{lem:K3_center_manifold} to describe the local transition maps $\pi_3^\pm : \Sigma_3^{\textup{in},\pm} \to \Sigma_3^{\textup{out},\pm}$ induced by the forward evolution of initial conditions in
\[
\Sigma_3^{\textup{in},\pm} := \left\{(u_3, r_3, \delta) : \| u_3 \mp 1 \|_{\widetilde{\mathcal Z}} \leq \beta, r_3 \in [0, \nu] \right\}
\]
up to
\[
\Sigma_3^{\textup{out},\pm} := \left\{(u_3, \nu, \eps_3) : \| u_3 \mp 1 \|_{\widetilde{\mathcal Z}} \leq \beta, \eps_3 \in [0, \delta] \right\} ,
\]
see Figure \ref{fig:center_manifolds}. 

\begin{proposition}
	\label{prop:K3_dynamics}
	Fix $\lambda \neq 0$. For fixed but sufficiently small $\beta, \delta, \nu > 0$, one of the following assertions are true:
	\begin{enumerate}
		\item[(i)] $\lambda > 0$. In this case, $\pi_3^+$ is well-defined and given by
		\[
		\pi_3 : 
		\begin{pmatrix}
			u_3 \\
			r_3 \\
			\delta
		\end{pmatrix} \mapsto 
		\begin{pmatrix}
			\Psi_3^+ \left(\nu, \delta (r_3 / \nu)^4 \right) + \mathcal O \left( \e^{-\gamma \nu^2 / 2 \delta r_3^4} \right) \\
			\nu \\
			\delta (r_3 / \nu)^4
		\end{pmatrix} ,
		\]
		where $\Psi_3^+$ is the function which defines the center manifold $\mathcal M_3^{a,+}$, and $\gamma > 0$ is the constant which appears in Lemma \ref{lem:K3_center_manifold}.
		\item[(ii)] $\lambda < 0$. In this case, $\pi_3^-$ is well-defined and given by
		\[
		\begin{pmatrix}
			u_3 \\
			r_3 \\
			\delta
		\end{pmatrix} \mapsto 
		\begin{pmatrix}
			\Psi_3^- \left(\nu, \delta (r_3 / \nu)^4 \right) + \mathcal O \left( \e^{-\gamma \nu^4 / 2 \delta r_3^4} \right) \\
			\nu \\
			\delta (r_3 / \nu)^4
		\end{pmatrix} ,
		\]
		where $\Psi_3^-$ is the function which defines the center manifold $\mathcal M_3^{a,-}$, and $\gamma > 0$ is the constant which appears in Lemma \ref{lem:K3_center_manifold}.
	\end{enumerate}
	
\end{proposition}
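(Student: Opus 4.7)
The proof strategy closely mirrors that of Proposition \ref{prop:K1_dynamics}: the $(r_3, \eps_3)$-subsystem decouples from the $u_3$-equation, so we solve it explicitly to obtain the exit values of $(r_3, \eps_3)$ and the transition time $T_3$, and then control the $u_3$-component via the attractivity of the center manifold $\mathcal M_3^{a,\pm}$ from Lemma \ref{lem:K3_center_manifold}. Since the reduced dynamics and the attractivity mechanism are identical up to the sign of $u_3$ near $\mathcal Q_3^\pm$, it suffices to treat case (i); case (ii) follows by replacing $\mathcal M_3^{a,+}$, $\mathcal Q_3^+$ and $\Sigma_3^{\textup{in/out},+}$ with their negative counterparts throughout.

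\textbf{Step 1 (reduced dynamics).} I would first observe that $\eps = r_3^4 \eps_3$ is a conserved quantity of the $(r_3,\eps_3)$-subsystem in \eqref{eq:K3_ODEs} (this is just the blown-down $\eps$ viewed in chart $K_3$ with $s=3$). Combined with the entry constraint $\eps_3(0) = \delta$ and the exit constraint $r_3(T_3) = \nu$, this immediately yields the exit value
\[
\eps_3(T_3) = \delta (r_3/\nu)^4 .
\]
Integrating $\eps_3' = -2 \eps_3^2$ then gives $\eps_3(t_3) = \delta/(1 + 2\delta t_3)$, and imposing $\eps_3(T_3) = \delta(r_3/\nu)^4$ yields the transition time
\[
T_3 = \frac{(\nu/r_3)^4 - 1}{2 \delta} ,
\]
which is asymptotically of order $\nu^4/(2 \delta r_3^4)$ as $r_3 \to 0^+$.

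\textbf{Step 2 (staying in $\mathcal U_3^\pm$).} Before applying Lemma \ref{lem:K3_center_manifold}, I need to verify that solutions with initial data in $\Sigma_3^{\textup{in},+}$ remain in the neighbourhood $\mathcal U_3^+$ throughout $[0,T_3]$. For this I would shrink $\beta$, $\delta$, $\nu$ until $\mathcal M_3^{a,+}$ restricted to the relevant range of $(r_3,\eps_3)$ sits well inside $\mathcal U_3^+$ and is encircled by a trapping neighbourhood. Since the initial separation $\|u_3 - \Psi_3^+(r_3(0),\delta)\|_{\widetilde{\mathcal Z}} \leq \beta + \mathcal O(|(\nu,\delta)|)$ is small, the strong attractivity afforded by the spectral gap rapidly contracts trajectories towards $\mathcal M_3^{a,+}$ on a timescale of order $\gamma^{-1}$, which is much shorter than $T_3$. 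This forward invariance argument is the only non-routine ingredient.

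\textbf{Step 3 (attractivity and conclusion).} Once confinement is secured, Lemma \ref{lem:K3_center_manifold} provides a base point $\tilde{\bm u}_3 \in \mathcal M_3^{a,+} \cap \mathcal U_3^+$ such that $\bm u_3(T_3;\bm u_3(0)) = \bm u_3(T_3;\tilde{\bm u}_3) + \mathcal O(\e^{-\gamma T_3})$ in $\widetilde{\mathcal Z}$. The $(r_3,\eps_3)$-components of $\bm u_3(T_3;\tilde{\bm u}_3)$ agree with those of $\bm u_3(T_3;\bm u_3(0))$ (since $r_3$ and $\eps_3$ are uniquely determined by Step 1), and the $u_3$-component equals $\Psi_3^+(\nu, \delta(r_3/\nu)^4)$ by invariance of the graph. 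Substituting the expression for $T_3$ from Step 1 converts $\e^{-\gamma T_3}$ into the exponentially small remainder stated in the proposition.

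\textbf{Main obstacle.} The technically delicate step is Step 2, because $T_3 \to \infty$ as $r_3 \to 0$, so one must control the PDE flow over an unbounded time interval using only a local attractivity statement. The key point is that the contraction rate $\gamma$ associated with the center manifold is $\eps$-independent, so once a solution has entered a fibre of the strongly stable foliation it is trapped in a small tube around $\mathcal M_3^{a,+}$ for all later $t_3$, uniformly in the (arbitrarily small) values of $r_3(0)$ and $\eps_3(0) = \delta$. Once this uniform confinement is in place, the remainder of the proof is a direct computation.
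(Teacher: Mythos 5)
Your proposal follows essentially the same route as the paper's proof: solve the decoupled $(r_3,\eps_3)$ subsystem (the paper integrates directly rather than invoking the conserved quantity $r_3^4\eps_3$, but the exit values and the transition time $T_3$ agree), then apply the attractivity statement of Lemma \ref{lem:K3_center_manifold} and substitute $T_3$, $r_3(T_3)$, $\eps_3(T_3)$ to obtain the exponentially small remainder. Your Step 2 merely makes explicit the confinement requirement that the paper absorbs into the hypothesis of ``fixed but sufficiently small $\beta,\delta,\nu$'', so there is no substantive difference in approach.
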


\begin{proof}
	We consider the case with $\lambda > 0$. The case $\lambda < 0$ is analogous. The ODEs for $(r_3, \eps_3)$ decouple, and can be solved by direct integration. We also need an estimate for the transition time $T_3$, which can be determined using the constraint $r_3(t_3 = T_3) = \nu$. We obtain
	\[
	T_3 = \frac{1}{2 \delta} \left( \left( \frac{\nu}{r_3(0)} \right)^4 - 1 \right) .
	\]
	Using the attractivity of the center manifold $\mathcal M_3^{a,+}$, we obtain
	\[
	u_3(T_3) = \Psi_3 \left(r_3(T_3), \eps_3(T_3) \right) + \mathcal O \left( \e^{- \gamma T_3} \right) ,
	\]
	which yields the desired result after substituting the values for $T_3$, $r_3(T_3)$ and $\eps_3(T_3)$.
\end{proof}

\begin{remark}
	Based on comparisons with the ODE analysis in \cite{Krupa2001c}, one might expect the existence of a repelling local center manifold in a neighbourhood of the point $u_3 = r_3 = \eps_3 = 0$. However, direct calculation shows that $\sigma(\Lop_3) = (-\infty, 1]$, i.e.~that there is no spectral gap in this case due to the essential spectrum. This may lead to difficulties for approaches to the study of canard phenomena based on geometric blow-up and center manifold theory, in which a detailed understanding of the behaviour close to $u_3 = r_3 = \eps_3 = 0$ is necessary in order to describe the dynamics. We leave the challenging question of whether spatio-temporal canard phenomena can be described using geometric blow-up techniques of the kind developed herein to future work.
\end{remark}

\subsubsection{Dynamics in the rescaling chart $K_2$}

The equations in chart $K_2$ 
are obtained via simple rescalings, and given by
\begin{equation}
	\label{eq:K2_eqns}
	\begin{split}
		\partial_{t_2} u_2 &= \partial_{x_2}^2 u_2 + \mu_2 u_2 - u_2^3 + r_2 \lambda + r_2 \mathcal R_2(u_2, \mu_2, r_2) , \\
		\mu_2' &= 1 ,
	\end{split}
\end{equation}
where, as usual, we omit the $r_2' = 0$ equation and view $r_2 = \eps^{1/4} \ll 1$ as a perturbation parameter, $\mathcal R_2(u_2, \mu_2, r_2) := r_2^{-4} \mathcal R (r_2 u_2, r_2^2 \mu_2, r_2^4) = \mathcal O(u_2^4, r_2 u_2, r_2^2 \mu_2, r_2^4)$, we have rescaled space and time according to
\[
\frac{t_2}{2} = r_2^2 , \qquad \frac{x_2}{x} = r_2 ,
\]
and the prime now denotes differentiation with respect to $t_2$. We are interested in the evolution of solutions with initial conditions in
\[
\Sigma_2^{\textup{in}} := \left\{ (u_2, -\Omega, r_2) : \| u_2 \|_{\widetilde{\mathcal Z}} \leq \beta_-, r_2 \in [0,\varrho] \right\} ,
\]
where $\Omega > 0$ and $\beta_-, \varrho > 0$ are large respectively small positive constants, up to an exit section
\[
\Sigma_2^{\textup{out}, \pm} := \left\{ (u_2, \Omega, r_2) : \| u_2 \mp \sqrt{\Omega} \|_{\widetilde{\mathcal Z}} \leq \beta_+, r_2 \in [0,\varrho] \right\} ,
\]
where $\beta_+ > 0$ is another small but fixed positive constant. Notice that $\kappa_{12}(\Sigma_1^{\textup{out}}) \subseteq \Sigma_2^{\textup{in}}$ if we choose $\Omega = 1 / \sqrt \delta$, $\varrho = \delta^{1/4} \nu$, and $\beta > 0$ sufficiently small; this follows from direct calculations and an application of Lemma \ref{lem:norm_equivalence} in Appendix \ref{app:wieghted_spaces}. Similarly, $\Sigma_2^{\textup{out},\pm} \subseteq \kappa_{23}^{-1}(\Sigma_3^{\textup{in},\pm})$ if $\Omega = 1 / \sqrt \delta$, $\varrho = \delta^{1/4} \nu$, and $\beta_+ > 0$ is chosen sufficiently small. Finally, note that the time taken for solutions to move from $\Sigma_2^{\textup{in}}$ to the value $\mu_2(t_2) = \Omega$ corresponding to the sections $\Sigma_2^{\textup{out},\pm}$, assuming that they exist on this interval, is finite and given by $T_2 = 2 \Omega$ (this follows from $\dot \mu_2 = 1$). In the following result, we denote the action of the change of coordinate maps $\kappa_{ij}$ defined in \eqref{eq:kappa_ij} on the $u_i$-component by $\tilde \kappa_{ij}$.

\begin{lemma}
	\label{lem:K2_connection_pitchfork}
	For $\varrho > 0$ fixed but sufficiently small, there exists a $\mathcal C^k$-function $\Psi_2 : \R \times [0,\varrho] \to \R$ such that
	\begin{enumerate}
		\item[(i)] $\Psi_2(\mu_2, r_2) = \tilde \kappa_{12} \left( \Psi_1(r_1, \eps_1) \right)$ for all $\mu_2 < 0$;
		\item[(ii)] If $\lambda > 0$, then $\Psi_2(\mu_2, r_2) = \tilde \kappa_{23}^{-1} \left( \Psi_3^+(r_3, \eps_3) \right)$ for all $\mu_2 > 0$;
		\item[(iii)] If $\lambda < 0$, then $\Psi_2(\mu_2, r_2) = \tilde \kappa_{23}^{-1} \left( \Psi_3^-(r_3, \eps_3) \right)$ for all $\mu_2 > 0$;
		\item[(iv)] The two-dimensional manifold
		\[
		\mathcal M_2^a := \left\{ \left(\Psi_2(\mu_2,r_2) , \mu_2, r_2 \right) : \mu_2 \in [-\Omega, \Omega], r_2 \in [0,\varrho] \right\}
		\]
		is locally invariant for system \eqref{eq:K2_eqns}.
	\end{enumerate}
\end{lemma}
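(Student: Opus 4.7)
The plan is to reduce the lemma to the ODE blow-up analysis of \cite{Krupa2001c}, exploiting the fact that spatially homogeneous functions form a flow-invariant subspace of \eqref{eq:K2_eqns}. First, I would observe that restricting system \eqref{eq:K2_eqns} to solutions which are constant in $x_2$ yields the planar fast-slow ODE
\begin{equation*}
    u_2' = \mu_2 u_2 - u_2^3 + r_2 \lambda + r_2 \mathcal R_2(u_2, \mu_2, r_2), \qquad \mu_2' = 1,
\end{equation*}
which coincides with the rescaling-chart equations for the dynamic pitchfork in \cite{Krupa2001c}. Their analysis (a shooting/attracting invariant manifold argument in the plane) produces, for each $\lambda \neq 0$, a distinguished smooth solution $u_2 = \Psi_2(\mu_2, r_2)$ defined for $\mu_2 \in [-\Omega, \Omega]$ and $r_2 \in [0, \varrho]$ with $\varrho$ sufficiently small, which asymptotes as $\mu_2 \to -\Omega$ to the extension of $\mathcal S_a$ and as $\mu_2 \to +\Omega$ to the extension of $\mathcal S_a^+$ if $\lambda > 0$ or of $\mathcal S_a^-$ if $\lambda < 0$. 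Smoothness in $(\mu_2, r_2)$ follows from smooth dependence of ODE solutions on initial conditions and parameters.

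The candidate $\Psi_2$ then defines the 2-dimensional manifold $\mathcal M_2^a$, which is locally invariant for the PDE \eqref{eq:K2_eqns}: spatially homogeneous initial data remain spatially homogeneous under its flow, and each $r_2$-slice of $\mathcal M_2^a$ is by construction a trajectory of the homogeneous ODE above. This proves assertion (iv) directly.

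To obtain the matching identities (i)--(iii), I would use the fact that the center manifolds $\mathcal M_1^a$ and $\mathcal M_3^{a,\pm}$ from Lemmas \ref{lem:K1_center_manifold} and \ref{lem:K3_center_manifold} consist entirely of spatially homogeneous functions, since the reduction functions $\Psi_1$ and $\Psi_3^\pm$ depend only on the spatially constant variables $(r_i, \eps_i)$. Hence, in the overlap $K_1 \cap K_2 = \{\mu_2 < 0\}$, both $\tilde\kappa_{12}(\Psi_1)$ and $\Psi_2$ are solutions of the same planar ODE which stay close to the extension of $\mathcal S_a$. By the $K_1$-analogue of Remark \ref{rem:CM_restriction_pitchfork}, each is identified on $\{r_2 > 0\}$ with the unique blow-up of the slow manifold $\mathcal S_a$, which forces equality. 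The corresponding argument in $K_2 \cap K_3 = \{\mu_2 > 0\}$, combined with Remark \ref{rem:CM_restriction_pitchfork}, yields (ii) and (iii) according to the sign of $\lambda$.

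The main obstacle is a uniqueness issue: attracting slow manifolds are defined only up to exponentially small corrections, so in principle the three candidate extensions $\tilde\kappa_{12}(\Psi_1)$, $\Psi_2$, and $\tilde\kappa_{23}^{-1}(\Psi_3^\pm)$ need not be literally equal. I would handle this by fixing the representative $\Psi_2$ via its formal Taylor expansion at $r_2 = 0$, which is uniquely determined by substituting a power series ansatz into the homogeneous ODE and matching like powers. Since the reduction functions $\Psi_1$ and $\Psi_3^\pm$ are constructed by the same procedure in their respective charts, and since the blow-up transformation is a diffeomorphism on $\{r_2 > 0\}$, the formal expansions agree after applying $\tilde\kappa_{12}$ and $\tilde\kappa_{23}^{-1}$, and the analytic continuation of the shooting trajectory in $K_2$ is uniquely determined once its asymptotics at one end are fixed. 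This ensures that all three objects coincide on their common domains of definition, as required.
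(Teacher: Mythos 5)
Your overall strategy -- restrict \eqref{eq:K2_eqns} to the flow-invariant subspace of spatially homogeneous functions, obtain the planar fast--slow ODE, and invoke the rescaling-chart analysis of \cite{Krupa2001c} -- is the same as the paper's, and your argument for assertion (iv) is fine. The genuine gap is in how you establish the exact identities (i)--(iii). You correctly identify the obstruction (attracting slow/center manifolds are unique only up to exponentially small corrections), but your proposed fix does not overcome it: fixing ``the representative $\Psi_2$ via its formal Taylor expansion at $r_2=0$'' does not fix a function, because in the $\mathcal C^k$ (or even $\mathcal C^\infty$) setting two distinct invariant graphs can share the same formal expansion and differ by flat, exponentially small terms; and there is no ``analytic continuation'' available here, since nothing in the construction is analytic, so ``uniquely determined once its asymptotics at one end are fixed'' is not justified. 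Matching formal series after applying $\tilde\kappa_{12}$ and $\tilde\kappa_{23}^{-1}$ therefore cannot force equality of $\tilde\kappa_{12}(\Psi_1)$, your shooting solution $\Psi_2$, and $\tilde\kappa_{23}^{-1}(\Psi_3^\pm)$.

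The paper avoids this issue for (i) by construction rather than by a uniqueness argument: $\Psi_2$ is \emph{defined} on $\{\mu_2<0\}$ as $\tilde\kappa_{12}(\Psi_1(r_1,\eps_1))$, i.e.\ as the transplant of the chart-$K_1$ reduction function, and is then extended under the flow of the planar ODE $\Psi_2' = \mu_2\Psi_2 - \Psi_2^3 + r_2\lambda + r_2\mathcal R_2(\Psi_2,\mu_2,r_2)$, $\mu_2'=1$, which governs the spatially independent solutions of \eqref{eq:K2_eqns}. With this definition, (i) and (iv) are immediate, and (ii)--(iii) are exactly the connection statement of the ODE blow-up analysis in \cite{Krupa2001c} for the sign of $\lambda$ in question (where the identification with the exit-chart center manifold uses that analysis, not uniqueness of formal expansions). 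If you want to retain your structure, you should likewise take (i) as the definition of $\Psi_2$ on $\{\mu_2<0\}$ and extend by the flow, rather than constructing $\Psi_2$ independently and then trying to prove three coincidences of exponentially close but a priori distinct manifolds.
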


\begin{proof}
	This follows directly from the analysis in \cite{Krupa2001c} if we define $\Psi_2$ using the expression in Assertion (i) and then extend it under the flow of the planar ODE system
	\[
	\begin{split}
		\Psi_2' &= \mu_2 \Psi_2 - \Psi_2^3 + r_2 \lambda + r_2 \mathcal R_2(\Psi_2, \mu_2, r_2) , \\
		\mu_2' &= 1 ,
	\end{split}
	\]
	which governs the spatially independent solutions of system \eqref{eq:K2_eqns}.
\end{proof}

\begin{figure}[t!]
	\centering
	\includegraphics[scale=0.35]{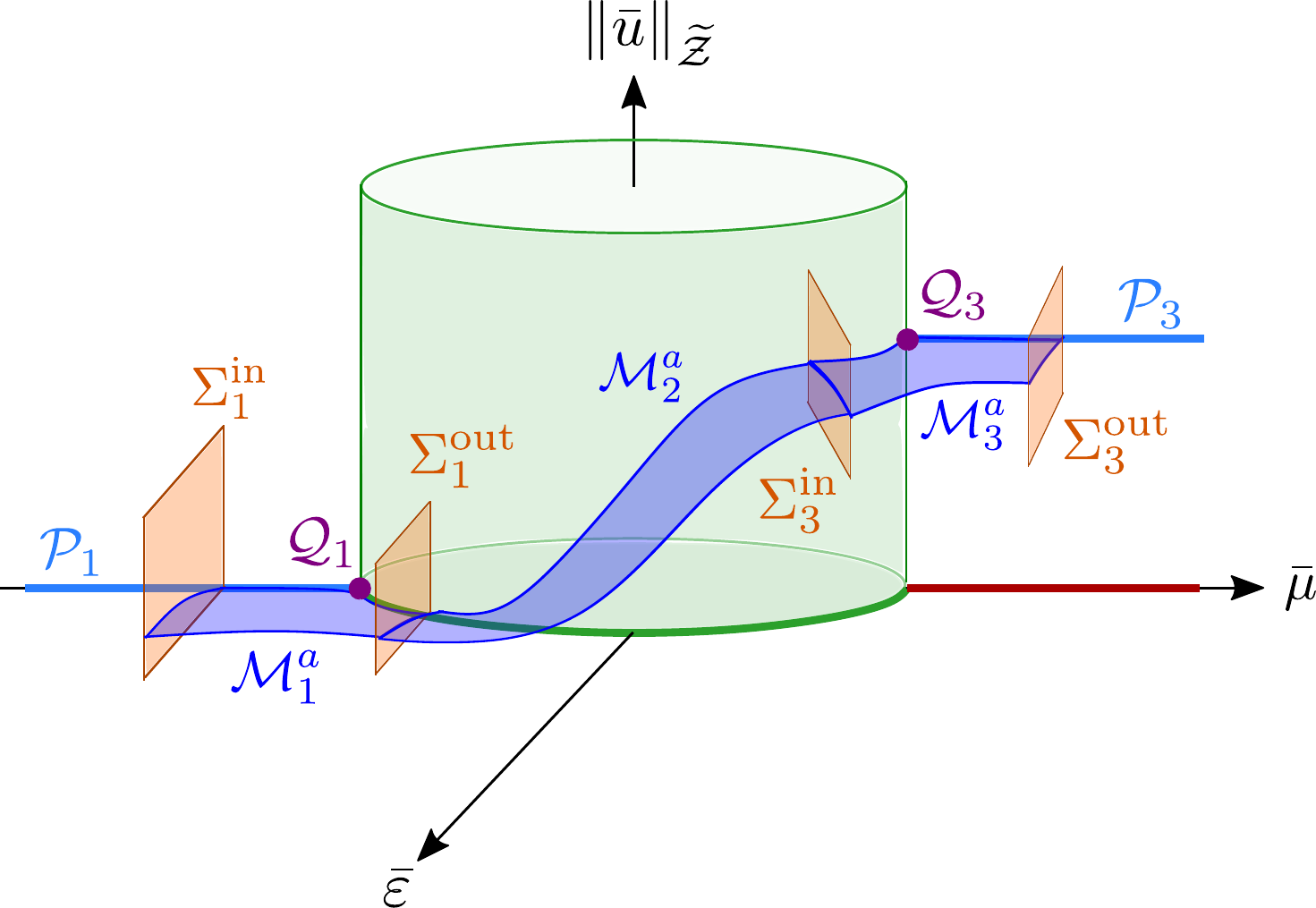}
	\caption{The $2$-dimensional manifold $\mathcal M_2^a$ described in Lemma \ref{lem:K2_connection_pitchfork} provides a connection between the local center manifolds $\mathcal M_1^a$ and $\mathcal M_3^a$ (which as in Figure \ref{fig:center_manifolds} denotes either $\mathcal M_3^{a,+}$ or $\mathcal M_3^{a,-}$, depending on the sign of $\lambda$). The entry and exit sections $\Sigma_2^{\textup{in}}$ and $\Sigma_2^{\textup{out},\pm}$ (not notated here), are super and subsets of $\Sigma_1^{\textup{out}}$ and $\Sigma_3^{\textup{in}}$ respectively.}
	\label{fig:connection}
\end{figure}

Lemma \ref{lem:K2_connection_pitchfork} establishes a connection between the center manifolds $\mathcal M_1^a$ and $\mathcal M_3^{a,\pm}$ identified in the entry and exit charts $K_1$ and $K_3$ respectively. In particular, $\mathcal M_1^a$ connects with $\mathcal M_3^{a,+}$ ($\mathcal M_3^{a,-}$) if $\lambda > 0$ ($\lambda < 0$). This is shown in Figure \ref{fig:connection}.

We now show that solutions of the PDE system \eqref{eq:K2_eqns} `track' solutions on $\mathcal M_2^a$. In particular, we describe the transition map $\pi_2^+ : \Sigma_2^{\textup{in}} \to \Sigma_2^{\textup{out},+}$ ($\pi_2^- : \Sigma_2^{\textup{in}} \to \Sigma_2^{\textup{out},-}$) induced by system \eqref{eq:K2_eqns} with $\lambda > 0$ ($\lambda < 0$).

\begin{proposition}
	\label{prop:K2_dynamics}
	The following assertions hold for sufficiently small but fixed $\beta_-, \varrho > 0$:
	\begin{enumerate}
		\item[(i)] If $\lambda > 0$, then the map $\pi_2^+$ is well-defined and given by
		\[
		\pi_2^+ : 
		\begin{pmatrix}
			u_2 \\
			-\Omega \\
			r_2
		\end{pmatrix}
		\mapsto 
		\begin{pmatrix}
			\Psi_2(\Omega, r_2) + \mathcal Q_2^+(u_2, r_2) \\
			\Omega \\
			r_2
		\end{pmatrix}
		\]
		where $\| \mathcal Q_2^+(u_2, r_2) \|_{\widetilde{\mathcal Z}} \leq C \| u_2 - \Psi_2(-\Omega, r_2) \|_{\widetilde{\mathcal Z}}$ for some constant $C > 0$.
		\item[(ii)] If $\lambda < 0$, then the map $\pi_2^-$ is well-defined and given by
		\[
		\pi_2^- : 
		\begin{pmatrix}
			u_2 \\
			-\Omega \\
			r_2
		\end{pmatrix}
		\mapsto 
		\begin{pmatrix}
			\Psi_2(\Omega, r_2) + \mathcal Q_2^-(u_2, r_2) \\
			\Omega \\
			r_2
		\end{pmatrix}
		\]
		where $\| \mathcal Q_2^-(u_2, r_2) \|_{\widetilde{\mathcal Z}} \leq C \| u_2 - \Psi_2(-\Omega, r_2) \|_{\widetilde{\mathcal Z}}$ for some constant $C > 0$.
	\end{enumerate}
\end{proposition}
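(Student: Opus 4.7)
The plan is to prove both assertions simultaneously by estimating the deviation of the PDE solution from the spatially homogeneous connection $\mathcal{M}_2^a$ over the \emph{finite, $\eps$-independent} time interval $[0,T_2]$ with $T_2 = 2\Omega$. Since there is no singular limit to resolve in $K_2$ (we already worked in rescaled time), the problem reduces to a standard Gronwall-type finite-time approximation estimate of the kind developed for modulation equations in, e.g., \cite{Collet1990,Kirrmann1992,Schneider2017}, carried out in the weighted space $\widetilde{\mathcal{Z}}$.

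First I would define the deviation $w(x_2,t_2) := u_2(x_2,t_2) - \Psi_2(\mu_2(t_2), r_2)$. By Lemma \ref{lem:K2_connection_pitchfork}(iv), $\Psi_2$ is spatially homogeneous and satisfies the planar ODE underlying \eqref{eq:K2_eqns}, so subtracting yields
\[
\partial_{t_2} w = \partial_{x_2}^2 w + a(t_2,r_2)\, w + \mathcal{N}(w,t_2,r_2),
\]
where $a(t_2,r_2) = \mu_2(t_2) - 3\Psi_2(\mu_2(t_2),r_2)^2 + r_2\,(\ldots)$ is a scalar multiplier that is uniformly bounded on $[0,T_2]\times[0,\varrho]$, and $\mathcal{N}(w,t_2,r_2) = -3\Psi_2\, w^2 - w^3 + r_2\bigl(\mathcal{R}_2(\Psi_2+w,\mu_2,r_2) - \mathcal{R}_2(\Psi_2,\mu_2,r_2)\bigr)$ collects purely nonlinear corrections. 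Note that $w$ satisfies a semilinear parabolic equation with bounded (time-dependent, $x_2$-independent) coefficients.

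Next I would apply the variation-of-constants formula driven by the heat semigroup $\{e^{s\partial_{x_2}^2}\}_{s\geq 0}$. The key technical observation is that this semigroup is uniformly bounded on $\widetilde{\mathcal{Z}}$ for $s \in [0,T_2]$: the polynomial weights are controlled via the pointwise inequality $1+|x_2| \leq (1+|x_2 - y|)(1+|y|)$ and an analogous one for $1+x_2^2$, combined with the Gaussian decay of the heat kernel, which makes the moment integrals $\int (1+|z|)^k G(z,s)\,dz$ finite on $[0,T_2]$. The scalar multiplier $a(t_2,r_2) w$ contributes a bounded perturbation of operator norm $\sup |a|$ on $\widetilde{\mathcal{Z}}$. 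For the nonlinearity, since $\widetilde{\mathcal{Z}}$ is continuously embedded in $\mathcal{C}^0_{\textup{b,unif}}$ and behaves as a Banach algebra with respect to the relevant powers (standard product estimates on weighted spaces), one obtains a local Lipschitz estimate $\|\mathcal{N}(w_1,\cdot) - \mathcal{N}(w_2,\cdot)\|_{\widetilde{\mathcal{Y}}} \leq C(\|w_1\|_{\widetilde{\mathcal{Z}}} + \|w_2\|_{\widetilde{\mathcal{Z}}} + r_2)\|w_1 - w_2\|_{\widetilde{\mathcal{Z}}}$.

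Combining these ingredients with Gronwall's inequality produces the linear bound $\|w(\cdot,T_2)\|_{\widetilde{\mathcal{Z}}} \leq C\|w(\cdot,0)\|_{\widetilde{\mathcal{Z}}}$ for some constant $C = C(\Omega,\beta_-,\varrho)$ independent of $\eps$, provided $\|w\|_{\widetilde{\mathcal{Z}}}$ remains small on $[0,T_2]$ so that the nonlinear Lipschitz contribution stays subordinate to the linear part. This smallness is enforced by a standard continuation/bootstrap argument: choose $\beta_-,\varrho$ small enough that the a priori Gronwall bound keeps the solution in a ball on which the Lipschitz constants used are valid, and such that the image lies in $\Sigma_2^{\textup{out},\pm}$ (which requires verifying $\|\Psi_2(\Omega,r_2) \mp \sqrt{\Omega}\|_{\widetilde{\mathcal{Z}}} < \beta_+$, a consequence of Lemma \ref{lem:K2_connection_pitchfork} and the asymptotics in Lemma \ref{lem:K3_center_manifold}). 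Setting $\mathcal{Q}_2^\pm(u_2,r_2) := w(\cdot,T_2)$ then yields both assertions. The main obstacle is the second-derivative weight: propagating $\|(1+(\cdot)^2) w''\|_\infty$ cleanly requires the kernel-weight estimates above rather than energy methods, and verifying that the nonlinear term $\mathcal{N}$ actually lies in $\widetilde{\mathcal{Y}}$ (rather than a larger space) is the place where the specific structure of $\widetilde{\mathcal{Z}}$ must be used carefully; the fact that $\Psi_2$ is spatially constant is what makes this ultimately go through.
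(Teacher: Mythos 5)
Your proposal is correct and follows essentially the same route as the paper: define the deviation from the spatially homogeneous connection $\Psi_2$, write a Duhamel formula driven by the heat semigroup on $\widetilde{\mathcal Z}$, use the algebra/Lipschitz structure of $\widetilde{\mathcal Z}$ to bound the multiplier, and close with a Gr\"onwall estimate plus a continuation argument over the finite time $T_2 = 2\Omega$, finally setting $\mathcal Q_2^\pm$ equal to the terminal error. The only cosmetic differences are that the paper lumps the linear and nonlinear multipliers into a single factor $\mathcal K(E,\Psi_2)E$ and quotes a semigroup bound on $\widetilde{\mathcal Z}$ proved in its appendix, whereas you separate the scalar linear part from the nonlinearity and sketch the weighted kernel estimates yourself.
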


\begin{proof}
	We fix $\lambda > 0$ and consider the map $\pi_2^+ : \Sigma_2^{\textup{in}} \to \Sigma_2^{\textup{out},+}$. The map $\pi_2^-$ with $\lambda < 0$ is analogous.
	
	The idea is to measure the distance between solutions of system \eqref{eq:K2_eqns} with initial conditions in $\Sigma_2^{\textup{in}}$ and solutions on $\mathcal M_2^a$ which, by Lemma \ref{lem:K2_connection_pitchfork}, intersects both $\Sigma_2^{\textup{in}}$ and $\Sigma_2^{\textup{out},+}$. In order to do so, we introduce an error function
	\[
	E(\cdot, t_2, r_2) := \tilde u_2(\cdot, t_2, r_2) - \Psi_2(\mu_2(t_2), r_2),
	\]
	where $\tilde u(\cdot, t_2, r_2)$ denotes a solution to system \eqref{eq:K2_eqns} on the time interval $[0,T_2]$, $\mu_2(t_2) = -\Omega + t_2$, and $\Psi_2$ is the function described in Lemma \ref{lem:K2_connection_pitchfork}. Differentiating with respect to $t_2$ leads to the following evolution equation for $E$:
	\[
	\partial_{t_2} E = \partial_{x_2}^2 E + \mathcal K(E, \Psi_2) E ,
	\]
	where $\mathcal K(E, \Psi_2) := \mu_2(t_2) - 3 \Psi_2^2 - 3 E \Psi_2 - E^2$ and we used the fact that $\Psi_2' = \mu_2(t_2) \Psi_2 - \Psi_2^3 + r_2 \lambda + r_2 \mathcal R_2(\Psi_2, \mu_2, r_2)$. 
	Applying the variation of constants formula yields
	\[
		E (\cdot, t_2) = \e^{t_2 \Lambda_2} E(\cdot, 0)
		+ \int_0^{t_2} \e^{(t_2 - s_2) \Lambda_2} \mathcal K \left( E(\cdot, s_2), \Psi_2(s_2) \right) E (\cdot, s_2) \dd s_2 ,
	\]
	where $(\e^{t_2 \Lambda_2})_{t_2 \geq 0}$ denotes the (heat) semigroup generated by $\Lambda_2 := \partial_{x_2}^2$. We note that $(\e^{t_2 \Lambda_2})_{t_2 \geq 0} \in \mathcal L(\widetilde{\mathcal Z})$ for each $t_2 > 0$; this is shown in Appendix \ref{app:wieghted_spaces}, Lemma \ref{lem:semigroup}.
	
	Let
	\[
	\tilde T_2 := \inf \left\{ T_2, \sup \left\{ t_2 > 0 : \| E(\cdot, t_2) \|_{\widetilde{\mathcal Z}} \leq C_0 \right\} \right\} ,
	\]
	where the size of the constant $C_0 > \| E(\cdot, 0) \|_{\widetilde{\mathcal Z}} \geq 0$ is determined by the size of $\beta_- > 0$ (since both $\tilde u_2(\cdot, 0, r_2)$ and $\Psi_2(-\Omega,r_2)$ should correspond to initial conditions in $\Sigma_2^{\textup{in}}$). The expression for $E(\cdot, t_2)$ given above is well-defined for all $t_2 \in [0, \tilde T_2]$. On this interval, $\Psi_2(t_2)$ is bounded (due to the analysis in \cite{Krupa2001c}), and $E(\cdot, t_2)$ is bounded (by the definition of $\tilde T_2$). 
	It follows from these observations and the fact that $\widetilde{\mathcal Z}$ is an algebra (see Lemma \ref{lem:algebra} in Appendix \ref{app:wieghted_spaces}) that
	\[
	\| \mathcal K \left( E(\cdot, t_2), \Psi_2(t_2) \right) \|_{\widetilde{\mathcal Z}} \leq C_1 ,
	\]
	where $C_1$ is a constant, for all $t_2 \in [0, \tilde T_2]$. Taking norms, applying a triangle inequality and appealing to the estimate above, we obtain
	\[
		\| E(\cdot, t_2) \|_{\widetilde{\mathcal Z}} \leq
		\| \e^{t_2 \Lambda_2} \|_{\widetilde{\mathcal Z} \to \widetilde{\mathcal Z}} \| E(\cdot, 0) \|_{\widetilde{\mathcal Z}} 
		+ C_1 \int_0^{t_2} \| \e^{(t_2 - s_2) \Lambda_2} \|_{\widetilde{\mathcal Z} \to \widetilde{\mathcal Z}} \| E(\cdot, s_2) \|_{\widetilde{\mathcal Z}} \dd s_2 ,
	\]
	which holds for all $t_2 \in [0, \tilde T_2]$. Applying the bound on operator norm $\| \e^{t_2 \Lambda_2} \|_{\widetilde{\mathcal Z} \to \widetilde{\mathcal Z}}$ provided in Appendix \ref{app:wieghted_spaces}, Lemma \ref{lem:semigroup}, we have that
	\begin{equation}
		\label{eq:operator_norm}
		\| \e^{t_2 \Lambda_2} \|_{\widetilde{\mathcal Z} \to \widetilde{\mathcal Z}} \leq 1
	\end{equation}
	for all $t_2 \in [0, \tilde T_2]$. Combining all of this with a Gr\"onwall inequality leads to
	\[
	\| E(\cdot, t_2) \|_{\widetilde{\mathcal Z}} \leq
	\e^{C_1 t_2} \| E(\cdot, 0) \|_{\widetilde{\mathcal Z}} 
	\]
	for all $t_2 \in [0, \tilde T_2]$. 
	Fixing $\beta_- > 0$ such that $\| E(\cdot, 0) \|_{\widetilde{\mathcal Z}} \leq C_0 \e^{- C_1 T_2}$ (which is possible since $T_2 > 0$ is finite) ensures that the corresponding bound for $E(\cdot, t_2)$ holds for all $t_2 \in [0,\tilde T_2]$ with $\tilde T_2 = T_2$.
	
	The preceding arguments show that
	\[
	u_2(\cdot, T_2) = \Psi_2(T_2) + E(\cdot, T_2) ,
	\]
	where $\| E(\cdot, T_2) \|_{\widetilde{\mathcal Z}} \leq C \| E(\cdot, 0) \|_{\widetilde{\mathcal Z}}$ for some constant $C$. From here, Assertion (i) 
	follows directly from Lemma \ref{lem:K2_connection_pitchfork} after equating $E(\cdot, T_2)$ with $\mathcal Q_2^+(u_2, r_2)$. 
	Assertion (ii) follows by analogous arguments.
\end{proof}

The key implication of Proposition \ref{prop:K2_dynamics} and its proof is that solutions to the PDE system \eqref{eq:K2_eqns} which have initial conditions that are close to the manifold $\mathcal M_2^a$, remain close to $\mathcal M_2^a$ up until their intersection with either $\Sigma_2^{\textup{out,+}}$ (if $\lambda > 0$) or $\Sigma_2^{\textup{out,-}}$ (if $\lambda < 0$). Note that ``closeness'' here is quantified in terms of the initial separation on $\Sigma_2^{\textup{in}}$.

\subsubsection{Proving Theorem \ref{thm:pitchfork}}
\label{subsec:pitchfork_proof}

We are now in a position to prove Theorem \ref{thm:pitchfork}. We prove Assertion (i) (case $\lambda > 0$); Assertion (ii) is similar. In order to do so, we show that the transition map \[
\pi^+ := \pi_3^+ \circ \kappa_{23} \circ \pi_2^+ \circ \kappa_{12} \circ \pi_1 : \Sigma_1^{\textup{in}} \to \Sigma_3^{\textup{out},+} ,
\]
is well-defined and given by
\begin{equation}
	\label{eq:pi_plus}
	\pi^+ : 
	\begin{pmatrix}
		u_1 \\
		\nu \\
		\eps_1
	\end{pmatrix}
	\mapsto
	\begin{pmatrix}
		\Psi_3^+(\nu, \eps_1) + \mathcal O(\e^{- \gamma / 2 \eps_1}) \\
		\nu \\
		\eps_1
	\end{pmatrix} .
\end{equation}
This will imply the desired result, since applying the blow-down transformation to the right-hand side -- specifically the relations $u = \nu u_3$ and $\eps_1 = \eps / \nu^4$ -- yields
\[
	u(x,T) = \nu \Psi_3(\nu, \eps / \nu^4) + \mathcal O( \e^{- \gamma \nu^4 / 2 \eps} ) 
	= \phi_+(\nu, \eps) + \mathcal O( \e^{- \gamma \rho^2 / 2 \eps} ) ,
\]
where the equality $\nu \Psi_3(\nu, \eps / \nu^4) = \phi_+(\nu, \eps)$ follows from the fact that $\mathcal M_3^{a,+}|_{\eps_3 = 0} = \mathcal P_3$, which coincides with $\mathcal S_a^+ \times \{0\}$ after blow-down on $\{r_3 > 0\}$, and the uniqueness of the formal series representation of the center and slow manifolds $\mathcal M_3^{a,+}$ and $\mathcal S_{a}^+$; see Remark \ref{rem:CM_restriction_pitchfork} for details.

Fix an initial condition $(u_1, \nu, \eps_1)^\transpose \in \Sigma_1^{\textup{in}}$. Applying Proposition \ref{prop:K1_dynamics}, the change of coordinates formula for $\kappa_{12}$ in \eqref{eq:kappa_ij}, Lemma \ref{lem:K2_connection_pitchfork} assertion (i) and the relation $\Omega = - \delta^{-1/2}$ yields
\[
	\kappa_{12} \circ \pi_1 : 
	\begin{pmatrix}
		u_1 \\
		\nu \\
		\eps_1
	\end{pmatrix}
	\mapsto
	\begin{pmatrix}
		\delta^{-1/4} \Psi_1(\nu (\eps_1 / \delta)^{1/4}, \delta) + \mathcal O(\e^{- \gamma / 2 \eps_1}) \\ - \delta^{-1/2} \\
		\nu \eps_1^{1/4}
	\end{pmatrix} 
	=
	\begin{pmatrix}
		\Psi_2(- \Omega, \nu \eps_1^{1/4}) + \mathcal O(\e^{- \gamma / 2 \eps_1}) \\ 
		- \Omega \\ 
		\nu \eps_1^{1/4}
	\end{pmatrix} \in \Sigma_2^{\textup{in}} .
\]
This shows that initial conditions in $\Sigma_1^{\textup{in}}$ are mapped to initial conditions in $\Sigma_2^{\textup{in}}$ that are $\mathcal O(\e^{- \gamma / 2 \eps_1})$-close to $\mathcal M_2^a$ in the $\widetilde{\mathcal Z}$ norm.

\begin{remark}
	Proposition \ref{prop:K1_dynamics} implies that initial conditions are $\mathcal O(\e^{- \gamma / 2 \eps_1})$-close to $\mathcal M_1^a \cap \Sigma_1^{\textup{out}}$ at the exit time $t_1 = T_1$. `Closeness' here is measured with respect to the $\widetilde{\mathcal Z}$ norm of functions which depend on the independent spatial variable $x_1$. This can be translated to $\mathcal O(\e^{- \gamma / 2 \eps_1})$-closeness in the $\widetilde{\mathcal Z}$ norm for functions of the independent spatial variable $x_2$ using Lemma \ref{lem:norm_equivalence} and the fact that
	\[
	x_1 = \delta^{-1/4} x_2
	\]
	for functions in $\Sigma_2^{\textup{in}} \subseteq \kappa_{12}(\Sigma_1^{\textup{out}})$. Similar observations allow us to make closeness statements of this kind when translating between charts $K_2$ and $K_3$; we refer again to Lemma \ref{lem:norm_equivalence} and the discussion which follows it in Appendix \ref{app:wieghted_spaces} for details.
\end{remark}

Combining the preceding arguments with Proposition \ref{prop:K2_dynamics}, it follows that
\[
\pi_2^+ \circ \kappa_{12} \circ \pi_1 : 
\begin{pmatrix}
	u_1 \\
	\nu \\
	\eps_1
\end{pmatrix}
\mapsto
\begin{pmatrix}
	\Psi_2(\Omega, \nu \eps_1^{1/4}) + \mathcal O(\e^{- \gamma / 2 \eps_1}) \\
	\Omega \\
	\nu \eps_1^{1/4} 
\end{pmatrix} \in \Sigma_2^{\textup{out},+} ,
\]
where we note that the error in the first component is exponentially small because
\[
\| \mathcal Q_2^+(\Psi_2(-\Omega, \nu \eps_1^{1/4}) + \mathcal O(\e^{- \gamma / 2 \eps_1}), \nu \eps_1) \|_{\widetilde{\mathcal Z}} =
\mathcal O(\e^{- \gamma / 2 \eps_1}) .
\]
Since we choose $\Sigma_2^{\textup{out},+}$ so that $\kappa_{23}(\Sigma_2^{\textup{out},+}) \subseteq \Sigma_3^{\textup{in},+}$, we can apply the change of coordinate formula for $\kappa_{23}$ in \eqref{eq:kappa_ij} and Proposition \ref{prop:K3_dynamics} directly to the preceding expression in order to show that the map $\pi^+$ is well-defined and given by \eqref{eq:pi_plus}, thereby proving the desired result. This concludes the proof.
%
%

\subsection{The transcritical case ($s = 2$)}
\label{sub:proof_transcritical}

We turn now to the proof of Theorem \ref{thm:transcritical}. Since the proof proceeds very similarly to the proof of Theorem \ref{thm:pitchfork} we present less details, focusing primarily on the aspects which differ. As before, we start with an independent analysis of the dynamics in charts $K_i$.

\begin{remark}
	In the following we shall reuse some of the notation that we used in our analysis of the pitchfork case in Section \ref{sub:proof_pitchfork} (for equilibria, steady state branches, center manifolds etc), in order to avoid excess sub and superscripts. The interpretation should be clear from the context.
\end{remark}

\subsubsection{Dynamics in the entry chart $K_1$}

After rewriting system \eqref{eq:main_system_extended} (now with $s = 2$) in chart $K_1$ coordinates and applying a state-dependent transformation of time and space which is defined via the relations
\[
\frac{\dd t_1}{\dd t} = r_1 , \qquad 
\frac{x_1}{x} = r_1^{1/2} ,
\]
we may write the equations in chart $K_1$ as
\begin{equation}
	\label{eq:K1_eqns_transcritical}
	\frac{\dd \bm{u}_1}{\dd t_1} = \Lop_1 \bm{u}_1 + \Nop_1(\bm{u}_1),
\end{equation}
where $\bm{u}_1 = (u_1, r_1, \eps_1)$, $\Lop_1$ is the linear operator defined via
\begin{equation}
	\label{eq:L1_transcritical}
	\Lop_1 \bm{u}_1 = 
	\begin{pmatrix}
		\partial_{x_1}^2 u_1 - u_1 + \lambda \eps_1 \\
		0 \\
		0
	\end{pmatrix}
\end{equation}
and $\Nop_1$ is a nonlinear operator defined via
\[
\Nop_1(\bm{u}_1) = 
\begin{pmatrix}
	\eps_1 u_1 - u_1^2 - \frac{1}{2} \eps_1 x_1 \partial_{x_1} u_1 + r_1 \mathcal R_1 (u_1, r_1, \eps_1) \\
	- r_1 \eps_1 \\
	2 \eps_1^2
\end{pmatrix},
\]
where $\mathcal R_1(u_1, r_1, \eps_1) := r_1^{-3} \mathcal R(r_1 u_1, -r_1, r_1^2 \eps_1)$, which is well-defined and $\mathcal O(u_1, \eps_1)$ due to the asymptotics in Lemma \ref{lem:normal_forms} Assertion (i).

System \eqref{eq:K1_eqns_transcritical} has a branch of spatially homogeneous steady states
\[
\mathcal P_1 = \left\{ (0, r_1, 0) : r_1 \in [0,\nu] \right\} 
\]
where $\nu > 0$ is a small constant, and we note that for $r_1 > 0$, $\mathcal P_1$ corresponds to the branch of steady states along $\{ (0,\mu, 0) : \mu < 0 \}$ in the extended system \eqref{eq:main_system_extended} under the blow-up map $\Phi$, viewed in chart $K_1$. Our aim is to apply center manifold theory in a neighbourhood of the point \SJ{$\mathcal Q_1 := (0,0,0) \in \mathcal P_1$}, which lies at the intersection of $\mathcal P_1$ with the blow-up manifold (where $r_1 = 0$).

\begin{lemma}
	\label{lem:K1_center_manifold_transcritical}
	There exists a neighbourhood $\mathcal U_1$ of $\mathcal Q_1$ in $\mathcal Z$ such that system \eqref{eq:K1_eqns_transcritical} has a $2$-dimensional local center manifold
	\[
	\mathcal M_1^a = \left\{ (u_1, r_1, \eps_1) \in \mathcal U_1 : u_1 = \Psi_1(r_1, \eps_1) \right\} ,
	\]
	which is locally invariant and tangent to $\mathcal E_0$ at $\mathcal Q_1$, for which the reduction function $\Psi_1 \in \mathcal C^k(\mathcal E_0, \widetilde{\mathcal Z})$ has asymptotics
	\begin{equation}
		\label{eq:Psi_1_transcritical}
		\Psi_1(r_1, \eps_1) = \lambda \eps_1 + \mathcal O( |(r_1, \eps_1) |^2 ) .
	\end{equation}
	Moreover, $\mathcal M_1^a$ is locally attracting: if $\bm{u}_1(0) \in \mathcal U_1$ and $\bm u_1(t_1; \bm{u}_1(0)) \in \mathcal U_1$ for all $t_1 > 0$, then there exists a `base function' $\tilde{\bm{u}}_1 \in \mathcal M_1^a \cap \mathcal U_1$ and a $\gamma > 0$ such that
	\[
	\bm{u}_1(t_1; \bm{u}_1(0)) = \bm{u}_1 (t_1; \tilde{\bm{u}}_1) + \mathcal O(\e^{- \gamma t_1}) 
	\]
	as $t_1 \to \infty$.
\end{lemma}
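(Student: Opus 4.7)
The plan is to mirror the proof of Lemma \ref{lem:K1_center_manifold} in the pitchfork case by verifying Hypotheses \ref{hyp:1}, \ref{hyp:2} and \ref{hyp:3} for the pair $(\Lop_1, \Nop_1)$ defining system \eqref{eq:K1_eqns_transcritical} about $\mathcal Q_1$, then invoking \cite[Thm.~2.9]{Haragus2010} for existence of a $2$-dimensional local center manifold and \cite[Thm.~3.22]{Haragus2010} for local attractivity. The verification of Hypothesis \ref{hyp:1} — namely that $\Lop_1 \in \mathcal L(\mathcal Z, \mathcal X)$ and that $\Nop_1 \in \mathcal C^k(\mathcal V_1, \mathcal Y)$ with $\Nop_1(\mathcal Q_1) = \bm 0$ and $\D \Nop_1(\mathcal Q_1) = \bm 0$ on a neighbourhood $\mathcal V_1$ of $\mathcal Q_1$ — is a direct check, noting that $\mathcal R_1(u_1,r_1,\eps_1) = \mathcal O(u_1, \eps_1)$ and that the transport term $\tfrac{1}{2} \eps_1 x_1 \partial_{x_1} u_1$ is absorbed into the higher order terms thanks to the polynomial weights in $\widetilde{\mathcal Y}$ and $\widetilde{\mathcal Z}$.

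The one genuinely new ingredient is the additional linear coupling $\lambda \eps_1$ appearing in the first component of $\Lop_1$ (compare \eqref{eq:L1_transcritical} with \eqref{eq:L1}). To verify Hypothesis \ref{hyp:2}, I would solve the eigenvalue problem $\Lop_1 \bm v_1 = \zeta \bm v_1$ componentwise. For $\zeta \neq 0$ the second and third rows force $v_2 = v_3 = 0$, reducing to the problem $v_1''(x_1) - (1+\zeta) v_1(x_1) = 0$ already treated in Lemma \ref{lem:spectrum_K1}, so $\sigma_{-}(\Lop_1) = (-\infty, -1]$ as before. For $\zeta = 0$, bounded solutions of $v_1''(x_1) - v_1(x_1) + \lambda v_3 = 0$ in $\widetilde{\mathcal Z}$ reduce to $v_1 \equiv \lambda v_3$, so that the center subspace $\mathcal E_0$ is again $2$-dimensional, now spanned by $(0,1,0)^\transpose$ and $(\lambda, 0, 1)^\transpose$. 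The essential spectrum computation follows exactly the argument in Lemma \ref{lem:spectrum_K1} since $\lambda \eps_1$ is a bounded rank-one perturbation that does not affect the resolvent structure at infinity. Thus $\sigma(\Lop_1) = (-\infty, -1] \cup \{0\}$ with the same spectral gap.

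For Hypothesis \ref{hyp:3}, I would show the resolvent bounds \eqref{eq:resolvent_bounds_K1} by essentially the same computation as in Lemma \ref{lem:resolvent_K1}: given $\bm b \in \mathcal X$, invert $(i\omega \textup{Id} - \Lop_1)\bm a = \bm b$ by first solving $a_2 = -i b_2 / \omega$, $a_3 = -i b_3 / \omega$, then solving the inhomogeneous ODE $a_1''(x_1) - (1 + i\omega) a_1(x_1) = - b_1(x_1) - \lambda a_3$. The additional forcing $-\lambda a_3 = i \lambda b_3 / \omega$ contributes a uniformly bounded constant which is dominated by $\| \bm b \|_{\mathcal X} / |\omega|$, so the explicit representation analogous to \eqref{eq:a1} and the estimates leading to \eqref{eq:ab_bound} and \eqref{eq:ab_bound_2} carry through verbatim, with only the multiplicative constants changing.

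With the three hypotheses verified, \cite[Thm.~2.9]{Haragus2010} produces the local center manifold $\mathcal M_1^a = \{u_1 = \Psi_1(r_1, \eps_1)\}$ tangent to $\mathcal E_0$ at $\mathcal Q_1$, and \cite[Thm.~3.22]{Haragus2010} provides the exponential attractivity. Spatial independence of $\Psi_1$ follows, as in the pitchfork case, from the spatial independence of $r_1$ and $\eps_1$, so that $\Psi_1$ satisfies the reduced planar invariance equation obtained by substituting $u_1 = \Psi_1(r_1, \eps_1)$ into the $u_1$-equation of \eqref{eq:K1_eqns_transcritical}, namely
\[
\partial_{r_1} \Psi_1 \cdot (-r_1 \eps_1) + \partial_{\eps_1} \Psi_1 \cdot 2 \eps_1^2 = - \Psi_1 + \lambda \eps_1 + \eps_1 \Psi_1 - \Psi_1^2 + r_1 \mathcal R_1(\Psi_1, r_1, \eps_1) .
\]
A power series ansatz $\Psi_1(r_1, \eps_1) = c_{10} r_1 + c_{01} \eps_1 + \mathcal O(|(r_1, \eps_1)|^2)$ and matching of coefficients forces $c_{10} = 0$ and $c_{01} = \lambda$, yielding the asymptotics \eqref{eq:Psi_1_transcritical}. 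The main point to be careful about is the bookkeeping of the extra $\lambda \eps_1$ term throughout the spectral and resolvent calculations, since it slightly reshapes $\mathcal E_0$ and contributes the leading term of $\Psi_1$; however, no step of substantive new difficulty arises beyond what was already handled in Lemmas \ref{lem:spectrum_K1}, \ref{lem:resolvent_K1} and \ref{lem:K1_center_manifold}.
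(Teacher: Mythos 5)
Your proposal is correct, but it takes a genuinely different route from the paper. The paper's proof does not work with the non-diagonal operator $\Lop_1$ from \eqref{eq:L1_transcritical} at all: it first performs the preliminary affine change of variables $\tilde u_1 = u_1 - \lambda \eps_1$, which removes the coupling term $\lambda\eps_1$ and makes the linear part identical to the operator \eqref{eq:L1} of the pitchfork analysis. Hypotheses \ref{hyp:2} and \ref{hyp:3} then follow \emph{verbatim} from Lemmas \ref{lem:spectrum_K1} and \ref{lem:resolvent_K1}, the center manifold theorem gives a graph $\tilde u_1 = \tilde\Psi_1(r_1,\eps_1) = \mathcal O(|(r_1,\eps_1)|^2)$, and the asymptotics \eqref{eq:Psi_1_transcritical} are obtained simply by undoing the shift. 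You instead keep the tilted linear operator and redo the two key computations: the eigenvalue problem (correctly identifying the center subspace as $\operatorname{span}\{(0,1,0)^\transpose,(\lambda,0,1)^\transpose\}$, with the essential spectrum untouched since the coupling is a bounded finite-rank, hence compact, perturbation) and the resolvent bounds (where the extra forcing $-\lambda a_3 = i\lambda b_3/\omega$ only adds a constant of size $\mathcal O(|\omega|^{-2})$ with vanishing derivatives, so both estimates in \eqref{eq:resolvent_bounds_K1} survive with larger constants); you then extract the leading term $\lambda\eps_1$ from the invariance equation rather than from a coordinate shift, and your coefficient matching ($c_{10}=0$, $c_{01}=\lambda$) is correct. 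The trade-off: the paper's shift is shorter and reuses earlier lemmas without any new estimate, while your direct route avoids introducing the transformation, makes the tilt of $\mathcal E_0$ (and hence the tangency claim in the statement) explicit, and verifies that the perturbed resolvent computation is indeed harmless — all steps you flag are sound, so there is no gap, only extra (modest) work that the paper's change of variables is designed to avoid.
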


\begin{proof}
	The idea is to apply the center manifold theorem after a preliminary change of variables $\tilde u_1 = u_1 - \lambda_1 \eps_1$, which transforms system \eqref{eq:K1_eqns_transcritical} into
	\begin{equation}
		\label{eq:K1_eqns_transcritical_transformed}
		\frac{\dd \tilde{\bm{u}}_1}{\dd t_1} = \widetilde{\Lop}_1 \tilde{\bm{u}}_1 + \widetilde{\Nop}_1(\tilde{\bm{u}}_1) ,
	\end{equation}
	where $\tilde{\bm{u}}_1 = (\tilde u_1, r_1, \eps_1)$,
	\[
	\widetilde{\Lop}_1 \tilde{\bm{u}}_1 = 
	\begin{pmatrix}
		\partial_{x_1}^2 \tilde u_1 - \tilde u_1 \\
		0 \\
		0
	\end{pmatrix}
	\]
	and
	\[
	\widetilde{\Nop}_1(\tilde{\bm{u}}_1) = 
	\begin{pmatrix}
		- \frac{1}{2} \eps_1 x_1 \partial_{x_1} u_1 + \mathcal O( \tilde u_1^2, \tilde u_1 r_1, \tilde u_1 \eps_1, r_1^2, r_1 \eps_1, \eps_1^2 ) \\
		- r_1 \eps_1 \\
		2 \eps_1^2
	\end{pmatrix} .
	\]
	We have chosen not to write the entire first component of $\widetilde{\Nop}_1(\tilde{\bm{u}}_1)$ explicitly for convenience, since it will not play an important role in the proof.
	
	Notice that $\widetilde \Lop_1$ -- in contrast to $\Lop_1$ -- is diagonal. In fact, $\widetilde \Lop_1$ has precisely the same form as the linear operator $\Lop_1$ which arose in the pitchfork analysis in chart $K_1$, recall equation \eqref{eq:L1}. This allows us to apply the center manifold theorem to system \eqref{eq:K1_eqns_transcritical_transformed} immediately, since Hypothesis \ref{hyp:1} can be verified directly, and Hypotheses \ref{hyp:2} and \ref{hyp:3} follow from Lemmas \ref{lem:spectrum_K1} and \ref{lem:resolvent_K1} respectively. This yields a local center manifold of the form
	\[
	\widetilde{\mathcal M}_1^a = \left\{ (\tilde u_1, r_1, \eps_1) \in \widetilde{\mathcal U}_1 : \tilde u_1 = \tilde \Psi_1(r_1, \eps_1) \right\} ,
	\]
	where $\tilde \Psi_1(r_1, \eps_1) = \mathcal O( |(r_1, \eps_1) |^2 )$ and $\widetilde{\mathcal U}_1$ is a neighbourhood of $\bm{0}$ in $\mathcal Z$. Undoing the transformation $\tilde u_1 = u_1 - \lambda_1 \eps_1$ leads to the desired center manifold $\mathcal M_1^a$ and the asymptotics in \eqref{eq:Psi_1_transcritical}.
\end{proof}

We now use Lemma \ref{lem:K1_center_manifold_transcritical} in order to describe the transition map $\pi_1 : \Sigma_1^{\textup{in}} \to \Sigma_1^{\textup{out}}$ induced by the forward evolution of initial conditions in
\[
\Sigma_1^{\textup{in}} := \left\{(u_1, \rho, \eps_1) : \| u_1 \|_{\widetilde{\mathcal Z}} \leq \beta, \eps_1 \in [0, \delta] \right\}
\]
up to
\[
\Sigma_1^{\textup{out}} := \left\{(u_1, r_1, \delta) : \| u_1 \|_{\widetilde{\mathcal Z}} \leq \beta, r_1 \in [0, \rho] \right\} ,
\]
see Figure \ref{fig:center_manifolds}, which is also representative of the local geometry in the transcritical case. As in Section \ref{sub:proof_pitchfork}, we choose the constants defining $\Sigma_1^{\textup{in}}$ so that it contains the preimage of $\Delta^{\textup{in}} \times [0,\eps_0]$ under the blow-up map in chart $K_1$.

\begin{proposition}
	\label{prop:K1_dynamics_transcritical}
	The transition map $\pi_1$ is well-defined and given by
	\[
	\pi_1 : 
	\begin{pmatrix}
		u_1 \\
		\rho \\
		\eps_1
	\end{pmatrix} \mapsto 
	\begin{pmatrix}
		\Psi_1 \left(\rho (\eps_1 / \delta)^{1/2}, \delta \right) + \mathcal O(\e^{- \gamma / 2 \eps_1}) \\
		\rho (\eps_1 / \delta)^{1/2} \\
		\delta
	\end{pmatrix} ,
	\]
	where $\Psi_1$ is the function which defines the center manifold $\mathcal M_1^a$, and $\gamma > 0$ is the constant which appears in Lemma \ref{lem:K1_center_manifold_transcritical}.
\end{proposition}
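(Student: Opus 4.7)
The plan is to follow the same template as the proof of Proposition \ref{prop:K1_dynamics} in the pitchfork case, exploiting the fact that the slow subsystem for $(r_1, \eps_1)$ decouples from the $u_1$-equation in \eqref{eq:K1_eqns_transcritical}. First I would integrate
\begin{equation*}
\dot r_1 = -r_1 \eps_1, \qquad \dot \eps_1 = 2 \eps_1^2,
\end{equation*}
explicitly to obtain $\eps_1(t_1) = \eps_1(0) / (1 - 2 \eps_1(0) t_1)$ and $r_1(t_1) = r_1(0) \sqrt{\eps_1(0)/\eps_1(t_1)}$. Imposing the defining constraint $\eps_1(T_1) = \delta$ on the exit section $\Sigma_1^{\textup{out}}$ yields the transition time $T_1 = \tfrac{1}{2\eps_1(0)} - \tfrac{1}{2\delta}$ and, using $r_1(0) = \rho$, the identity $r_1(T_1) = \rho (\eps_1(0)/\delta)^{1/2}$. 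This already produces the second and third components of the claimed expression for $\pi_1$.

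Next I would invoke the attractivity statement in Lemma \ref{lem:K1_center_manifold_transcritical} to control the first component. By fixing $\beta, \delta, \rho > 0$ sufficiently small it can be arranged that every forward orbit starting in $\Sigma_1^{\textup{in}}$ remains in the neighbourhood $\mathcal U_1$ on $[0, T_1]$: the slow variables $r_1$ and $\eps_1$ are monotone and confined by construction, while the equation for $u_1$ is a small perturbation of the linearly contractive leading part (recall from Lemma \ref{lem:spectrum_K1} that the stable spectrum sits to the left of $-1$), so a standard a priori argument keeps $u_1$ within a $\beta$-ball. The lemma then supplies a base function $\tilde{\bm u}_1 \in \mathcal M_1^a \cap \mathcal U_1$ and a constant $\gamma > 0$ such that
\begin{equation*}
u_1(\cdot, T_1) = \Psi_1\bigl(r_1(T_1), \eps_1(T_1)\bigr) + \mathcal O(\e^{-\gamma T_1})
\end{equation*}
in the $\widetilde{\mathcal Z}$-norm, where the base function generates the first term on the right-hand side via the graph representation of $\mathcal M_1^a$.

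Substituting $r_1(T_1) = \rho (\eps_1/\delta)^{1/2}$ and $\eps_1(T_1) = \delta$, and absorbing the bounded multiplicative factor $\e^{\gamma/(2\delta)}$ into the $\mathcal O$-constant, transforms the residual $\e^{-\gamma T_1}$ into $\e^{-\gamma/(2\eps_1)}$, which completes the proposed formula for $\pi_1$. I do not anticipate any serious obstacle here: the argument is essentially a one-to-one transcription of the pitchfork analysis, with the only computational differences arising from the different desingularising time rescaling $\dd t_1 / \dd t = r_1$ in place of $r_1^2$ (the exponents in the explicit solutions therefore change from $1/4$ to $1/2$) and the fact that the center manifold asymptotics in \eqref{eq:Psi_1_transcritical} are linear rather than cubic in $(r_1, \eps_1)$. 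The mildly delicate point is ensuring that the orbit remains in $\mathcal U_1$ for the \emph{entire} interval $[0, T_1]$, whose length diverges as $\eps_1(0) \to 0$; this is handled exactly as sketched above by combining the contractivity toward $\mathcal M_1^a$ with the smallness of $\beta$ and $\delta$.
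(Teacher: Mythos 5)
Your proposal is correct and follows essentially the same route as the paper: integrate the decoupled $(r_1,\eps_1)$ subsystem to obtain $T_1 = \tfrac{1}{2\eps_1(0)} - \tfrac{1}{2\delta}$ and $r_1(T_1) = \rho(\eps_1/\delta)^{1/2}$, then invoke the attractivity of $\mathcal M_1^a$ from Lemma \ref{lem:K1_center_manifold_transcritical} to write $u_1(T_1) = \Psi_1(r_1(T_1),\eps_1(T_1)) + \mathcal O(\e^{-\gamma T_1})$ and absorb the fixed factor $\e^{\gamma/2\delta}$ into the $\mathcal O$-constant. The only difference is that you spell out the explicit solution formulas and the argument that orbits remain in $\mathcal U_1$ on $[0,T_1]$, which the paper leaves implicit.
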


\begin{proof}
	Similarly to the proof of Proposition \ref{prop:K1_dynamics}, $r_1(T_1) = \rho (\eps_1 / \delta)^{1/2}$ can be shown by direct integration, and the transition time $T_1$ can be determined using the constraint $\eps_1(t_1 = T_1) = \delta$. We obtain
	\[
	T_1 = \frac{1}{2 \eps_1(0)} - \frac{1}{2 \delta} .
	\]
	Using the attractivity of the center manifold $\mathcal M_1^a$, we obtain
	\[
	u_1(T_1) = \Psi_1 \left(r_1(T_1), \eps_1(T_1) \right) + \mathcal O \left( \e^{- \gamma T_1} \right) ,
	\]
	which yields the desired result after substituting the values for $T_1$, $r_1(T_1)$ and $\eps_1(T_1)$.
\end{proof}

Proposition \ref{prop:K1_dynamics_transcritical} shows that solutions of system \eqref{eq:K1_eqns_transcritical} are exponentially close to the spatially independent solutions in $\mathcal M_1^a \cap \Sigma_1^{\textup{out}}$ when they leave a neighbourhood of $\mathcal Q_1$ via $\Sigma_1^{\textup{out}}$.

\subsubsection{Dynamics in the exit chart $K_3$}

We now consider the dynamics in the exit chart $K_3$. Rewriting the equations in chart $K_3$ coordinates and applying a state-dependent transformation of time and space defined via
\[
\frac{\dd t_3}{\dd t} = r_3, \qquad \frac{x_3}{x} = r_3^{1/2} ,
\]
leads to the system
\begin{equation}
	\label{eq:K3_eqns_transcritical}
	\frac{\dd \bm{u}_3}{\dd t_3} = \Lop_3 \bm{u}_3 + \Nop_3(\bm{u}_3),
\end{equation}
where $\bm{u}_3 = (u_3, r_3, \eps_3)^\transpose \in \mathcal Z$, and $\Lop_3$, $\Nop_3$ are linear respectively nonlinear operators defined by
\[
\Lop_3 \bm{u}_3 = 
\begin{pmatrix}
	\partial_{x_3}^2 u_3 + u_3 + \lambda \eps_3 \\
	0 \\
	0
\end{pmatrix}
\]
and
\[
	N_3(\bm{u}_3) =
	\begin{pmatrix}
		- \eps_3 u_3 - u_3^2 - \frac{1}{2} \eps_3 x_3 \partial_{x_3} u_3 + r_3 \mathcal R_3(u_3, r_3, \eps_3) \\
		r_3 \eps_3 \\
		- 2 \eps_3^2
	\end{pmatrix},
\]
where $\mathcal R_3(u_3, r_3, \eps_3) = r_3^{-3} \mathcal R_3(r_3 u_3, r_3, r_3^2 \eps_3)$, which is well-defined and $\mathcal O(u_3, \eps_3)$ due to the asymptotics in Lemma \ref{lem:normal_forms}, Assertion (i).

System \eqref{eq:K3_eqns_transcritical} has a branch of spatially homogeneous steady states along
\[
\mathcal P_3 = \left\{ (1 + \mathcal O(r_3), r_3, 0) : r_3 [0,\nu] \right\} 
\]
which, for $r_3 > 0$, corresponds to the critical manifold $\{ (\phi(\mu,0),\mu,0) : \mu > 0 \}$ (recall \eqref{eq:slow_manifolds_transcritical}) in the extended system \eqref{eq:main_system_extended} under the blow-up map $\Phi$, viewed in chart $K_3$. We want to apply center manifold theory in a neighbourhood of the point $\mathcal Q_3 := (1, 0, 0) \in \mathcal P_3$, which lies at the intersection of $\mathcal P_3$ with the blow-up manifold (where $r_3 = 0$).

\begin{lemma}
	\label{lem:K3_center_manifold_transcritical}
	There exists a neighbourhood $\mathcal U_3$ of $\mathcal Q_3$ in $\mathcal Z$ such that system \eqref{eq:K3_eqns_transcritical} has a $2$-dimensional local center manifold
	\[
	\mathcal M_3^a = \left\{ (u_3, r_3, \eps_3) \in \mathcal U_3 : u_3 = \Psi_3(r_3, \eps_3) \right\} ,
	\]
	which is locally invariant and tangent to $\mathcal E_0$ at $\mathcal Q_3$, for which the reduction function $\Psi_3 \in \mathcal C^k(\mathcal E_0, \widetilde{\mathcal Z})$ has asymptotics
	\begin{equation}
		\label{eq:Psi_3_transcritical}
		\Psi_3(r_3, \eps_3) = 1 - (1 - \lambda) \eps_3 + \mathcal R_3(1,0,0) r_3 + \mathcal O( |(r_3, \eps_3) |^2 ) .
	\end{equation}
	Moreover, $\mathcal M_3^a$ is locally attracting: if $\bm{u}_3(0) \in \mathcal U_3$ and $\bm u_3(t_3; \bm{u}_3(0)) \in \mathcal U_3$ for all $t_3 > 0$, then there exists a `base function' $\tilde{\bm{u}}_3 \in \mathcal M_3^a \cap \mathcal U_3$ and a $\gamma > 0$ such that
	\[
	\bm{u}_3(t_3; \bm{u}_3(0)) = \bm{u}_3 (t_3; \tilde{\bm{u}}_3) + \mathcal O(\e^{- \gamma t_3}) 
	\]
	as $t_3 \to \infty$.
\end{lemma}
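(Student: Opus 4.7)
The plan is to mirror the proof of Lemma \ref{lem:K3_center_manifold} from the pitchfork case, exploiting the structural analogy between the exit-chart equations for $s = 2$ and $s = 3$. The key preparatory step is an affine near-identity coordinate change that simultaneously translates $\mathcal Q_3$ to the origin and diagonalises the linear part of \eqref{eq:K3_eqns_transcritical}, after which the center manifold theorem from \cite{Haragus2010} can be invoked in exactly the manner used to establish Lemma \ref{lem:K3_center_manifold}.

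A direct linearisation of the right-hand side of \eqref{eq:K3_eqns_transcritical} about $\mathcal Q_3 = (1,0,0)$ produces, in the $u_3$-component, a linear part of the form $\partial_{x_3}^2 v_3 - v_3 + (\lambda - 1)\eps_3 + \mathcal R_3(1,0,0) r_3$, where $v_3 := u_3 - 1$, while the $r_3$- and $\eps_3$-equations are already purely quadratic at the origin. Using that $\eps_3' = -2 \eps_3^2$ and $r_3' = r_3 \eps_3$ are higher order, I would remove the linear cross-couplings in $\eps_3$ and $r_3$ via the near-identity transformation
\[
\tilde u_3 = u_3 - 1 + (1 - \lambda) \eps_3 - \mathcal R_3(1,0,0) r_3,
\]
which is tailored so that the resulting linear operator $\widetilde \Lop_3$ takes the same form as the operator $\Lop_1$ appearing in the pitchfork analysis in chart $K_1$, namely $\widetilde \Lop_3 \tilde{\bm u}_3 = (\partial_{x_3}^2 \tilde u_3 - \tilde u_3, 0, 0)^\transpose$.

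With the system in this form I would verify Hypotheses \ref{hyp:1}, \ref{hyp:2} and \ref{hyp:3} from Appendix \ref{app:center_manifold_theory}. Hypothesis \ref{hyp:1} is straightforward and follows as in the pitchfork analyses, with the polynomial weights in $\widetilde{\mathcal Y}$ again ensuring that the transport term $- \tfrac12 \eps_3 x_3 \partial_{x_3} \tilde u_3$ lies in the target space. Since $\widetilde \Lop_3$ coincides in form with $\Lop_1$, Hypotheses \ref{hyp:2} and \ref{hyp:3} can be transferred essentially verbatim from Lemmas \ref{lem:spectrum_K1} and \ref{lem:resolvent_K1}, giving $\sigma_-(\widetilde \Lop_3) = (-\infty, -1]$ and $\sigma_0(\widetilde \Lop_3) = \{0\}$ with algebraic multiplicity $2$, a center subspace $\mathcal E_0(\mathcal Q_3)$ spanned by $(0,1,0)^\transpose$ and $(0,0,1)^\transpose$, and the required resolvent bounds on the imaginary axis.

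Applying \cite[Thm.~2.9]{Haragus2010} and \cite[Thm.~3.22]{Haragus2010} then produces a locally invariant, locally attracting $2$-dimensional local center manifold $\widetilde{\mathcal M}_3^a = \{\tilde u_3 = \widetilde \Psi_3(r_3, \eps_3)\}$ with $\widetilde \Psi_3 = \mathcal O(|(r_3, \eps_3)|^2)$. Inverting the affine transformation then yields the graph $u_3 = \Psi_3(r_3, \eps_3)$ whose leading-order terms in \eqref{eq:Psi_3_transcritical} are built into the translation. The spatial independence of $\Psi_3$, inherited from that of $r_3$ and $\eps_3$, reduces the invariance equation to a planar ODE whose formal power-series solution fixes the remaining coefficients by matching like powers. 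I do not anticipate a substantive obstacle: the entire analytical backbone is already in place from the pitchfork $K_1$ treatment, and the only meaningful distinction from Lemma \ref{lem:K3_center_manifold} is that the spectral gap is now of size $1$ rather than $2$, consistent with the range $\gamma \in (0,1)$ recorded in observation (c) of Section \ref{sec:main_results}.
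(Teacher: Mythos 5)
Your proposal is correct and follows essentially the same route as the paper: the affine shift $\tilde u_3 = u_3 - 1 + (1-\lambda)\eps_3 - \mathcal R_3(1,0,0)\, r_3$ is exactly the preliminary transformation used there, reducing the linear part to the operator of Lemma \ref{lem:spectrum_K1} so that Lemmas \ref{lem:spectrum_K1} and \ref{lem:resolvent_K1} verify the hypotheses of the center manifold theorem, with the asymptotics \eqref{eq:Psi_3_transcritical} recovered by undoing the shift and matching powers in the invariance equation. Your remark that the spectral gap is now of size $1$ (hence $\gamma \in (0,1)$) is likewise consistent with the paper.
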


\begin{proof}
	The proof proceeds analogously to the proof of Lemma \ref{lem:K1_center_manifold_transcritical} after a preliminary transformation $\tilde u_3 = - 1 + u_3 + (1 - \lambda) \eps_3 - \mathcal R_3(1,0,0) r_3$, which (i) translates $\mathcal Q_3$ to the origin and (ii) diagonalises the linear part so that Lemma's \ref{lem:spectrum_K1} and \ref{lem:resolvent_K1} can again be applied in order to verify the hypotheses of the center manifold theorem.
\end{proof}

\begin{remark}
	\label{rem:CM_restriction_transcritical}
	An analogue to Remark \ref{rem:CM_restriction_pitchfork} applies. In particular, $\mathcal M_3^a|_{\eps_3 = 0} = \mathcal P_3$, and $\mathcal M_3^a|_{r_3 > 0}$ coincides locally with the slow manifold $\mathcal S_{a}^+$ after blowing down.
\end{remark}

Using Lemma \ref{lem:K3_center_manifold_transcritical}, we can describe the local transition map $\pi_3 : \Sigma_3^{\textup{in}} \to \Sigma_3^{\textup{out}}$ induced by the forward evolution of initial conditions in
\[
\Sigma_3^{\textup{in}} := \left\{(u_3, r_3, \delta) : \| u_3 - 1 \|_{\widetilde{\mathcal Z}} \leq \beta, r_3 \in [0, \rho] \right\}
\]
up to
\[
\Sigma_3^{\textup{out}} := \left\{(u_3, \rho, \eps_3) : \| u_3 - 1 \|_{\widetilde{\mathcal Z}} \leq \beta, \eps_3 \in [0, \delta] \right\} ,
\]
see Figure \ref{fig:center_manifolds}. 

\begin{proposition}
	\label{prop:K3_dynamics_transcritical}
	For fixed but sufficiently small $\beta, \delta, \rho > 0$, the transition map $\pi_3$ is well-defined and given by
	\[
	\pi_3 : 
	\begin{pmatrix}
		u_3 \\
		r_3 \\
		\delta
	\end{pmatrix} \mapsto 
	\begin{pmatrix}
		\Psi_3^+ \left(\rho, \delta (r_3 / \rho)^2 \right) + \mathcal O \left( \e^{-\gamma \rho^2 / 2 \delta r_3^2} \right) \\
		\rho \\
		\delta (r_3 / \rho)^2
	\end{pmatrix} ,
	\]
	where $\Psi_3$ is the function which defines the center manifold $\mathcal M_3^a$, and $\gamma > 0$ is the constant which appears in Lemma \ref{lem:K3_center_manifold_transcritical}.	
\end{proposition}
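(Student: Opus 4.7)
The plan is to mirror the argument used in the proof of Proposition \ref{prop:K3_dynamics}, exploiting the fact that the ODE subsystem for $(r_3, \eps_3)$ in \eqref{eq:K3_eqns_transcritical} decouples entirely from $u_3$. From the form of $\Nop_3$ in the transcritical case, these equations read $r_3' = r_3 \eps_3$ and $\eps_3' = -2 \eps_3^2$, which I would solve by direct integration with initial data $(r_3(0), \eps_3(0)) = (r_3, \delta)$.

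Integrating the third equation first yields $\eps_3(t_3) = \delta / (1 + 2\delta t_3)$, after which $r_3'/r_3 = \eps_3$ integrates to $r_3(t_3) = r_3 \sqrt{1 + 2\delta t_3}$. The exit time $T_3$ is then determined from the constraint $r_3(T_3) = \rho$, giving
\[
T_3 = \frac{1}{2\delta}\left( \frac{\rho^2}{r_3^2} - 1 \right) , \qquad \eps_3(T_3) = \delta \left( \frac{r_3}{\rho} \right)^2 .
\]
With these explicit expressions in hand, I would then invoke the attractivity statement in Lemma \ref{lem:K3_center_manifold_transcritical}: provided the forward orbit of any initial condition in $\Sigma_3^{\textup{in}}$ remains within the neighbourhood $\mathcal U_3$ over the time interval $[0, T_3]$, there is a base function $\tilde{\bm u}_3 \in \mathcal M_3^a$ such that $u_3(T_3) = \Psi_3(r_3(T_3), \eps_3(T_3)) + \mathcal O(\e^{-\gamma T_3})$. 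Substituting the values obtained above and absorbing the bounded prefactor $\e^{\gamma/(2\delta)}$ into a slightly smaller choice of $\gamma > 0$ produces the asserted formula for $\pi_3$.

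The main obstacle is to verify that trajectories really do remain confined to $\mathcal U_3$ over the entire interval $[0, T_3]$, so that the attractivity estimate from Lemma \ref{lem:K3_center_manifold_transcritical} is applicable right up to the exit time. This should be arranged by taking $\beta, \delta, \rho > 0$ sufficiently small: the diagonalised linear part (whose spectral gap was established in the proof of Lemma \ref{lem:K3_center_manifold_transcritical}) forces $u_3$ towards the attracting branch $\mathcal M_3^a$, while the ODE solutions above show that $r_3(t_3)$ is monotone increasing from $r_3$ up to $\rho$ and $\eps_3(t_3)$ is monotone decreasing from $\delta$, so that the projection of the orbit onto the $(r_3, \eps_3)$-plane remains trapped in $[0,\rho] \times [0,\delta]$. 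Combined with the initial closeness $\| u_3(0) - 1 \|_{\widetilde{\mathcal Z}} \leq \beta$ and the asymptotics of $\Psi_3$ in \eqref{eq:Psi_3_transcritical}, this confinement allows Lemma \ref{lem:K3_center_manifold_transcritical} to be applied uniformly on $[0, T_3]$, closing the argument.
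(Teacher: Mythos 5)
Your proposal is correct and follows essentially the same route as the paper, which simply integrates the decoupled $(r_3,\eps_3)$ equations, determines $T_3$ from $r_3(T_3)=\rho$, and invokes the attractivity of $\mathcal M_3^a$ from Lemma \ref{lem:K3_center_manifold_transcritical} exactly as in Propositions \ref{prop:K1_dynamics}, \ref{prop:K3_dynamics} and \ref{prop:K1_dynamics_transcritical}. Your explicit solution formulas, exit time, and the absorption of the bounded prefactor $\e^{\gamma/(2\delta)}$ match the paper's (omitted) computation, and your confinement remark only makes explicit what the paper leaves implicit in the choice of small $\beta,\delta,\rho$.
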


\begin{proof}
	This follows from arguments analogous to those presented in the proof of Proposition \ref{prop:K1_dynamics}, \ref{prop:K3_dynamics} and \ref{prop:K1_dynamics_transcritical}, so we omit the details for brevity.
\end{proof}

\subsubsection{Dynamics in the rescaling chart $K_2$}

In chart $K_2$ the equations are obtained via simple rescalings. They are given by
\begin{equation}
	\label{eq:K2_eqns_transcritical}
	\begin{split}
		\partial_{t_2} u_2 &= \partial_{x_2}^2 u_2 + \mu_2 u_2 - u_2^2 + \lambda + r_2 \mathcal R_2(u_2, \mu_2, r_2) , \\
		\mu_2' &= 1 ,
	\end{split}
\end{equation}
where we omit the trivial equation ($r_2' = 0$) and view $r_2 = \eps^{1/2} \ll 1$ as a perturbation parameter, $\mathcal R_2(u_2, \mu_2, r_2) := r_2^{-3} \mathcal R(r_2 u_2, r_2 \mu_2, r_2^2) = \mathcal O(u_2, \mu_2, r_2)$, we have rescaled space and time according to
\[
\frac{t_2}{2} = r_2 , \qquad \frac{x_2}{x} = r_2^{1/2} ,
\]
and the prime now denotes differentiation with respect to $t_2$. We are interested in the evolution of solutions with initial conditions in
\[
\Sigma_2^{\textup{in}} := \left\{ (u_2, -\Omega, r_2) : \| u_2 \|_{\widetilde{\mathcal Z}} \leq \beta_-, r_2 \in [0,\varrho] \right\} ,
\]
where $\Omega > 0$ and $\beta_-, \varrho > 0$ are large respectively small positive constants, up to an exit section
\[
\Sigma_2^{\textup{out}} := \left\{ (u_2, \Omega, r_2) : \| u_2 \|_{\widetilde{\mathcal Z}} \leq \beta_+, r_2 \in [0,\varrho] \right\} ,
\]
where $\beta_+$ is another small positive constant. We choose $\Omega, \beta_\pm, \varrho$ so that $\kappa_{12}(\Sigma_1^{\textup{out}}) \subseteq \Sigma_2^{\textup{in}}$ and $\Sigma_2^{\textup{out}} \subseteq \kappa_{23}^{-1}(\Sigma_3^{\textup{in}})$, and note that the time taken for solutions to move from $\Sigma_2^{\textup{in}}$ to $\Sigma_2^{\textup{out}}$, assuming that they exist on this interval, is finite and given by $T_2 = 2 \Omega$. As in Lemma \ref{lem:K2_connection_pitchfork}, we denote the action of the change of coordinate maps $\kappa_{ij}$ defined in \eqref{eq:kappa_ij} on the $u_i$-component by $\tilde \kappa_{ij}$.

\begin{lemma}
	\label{lem:K2_connection_transcritical}
	For $\varrho > 0$ fixed but sufficiently small, there exists a $\mathcal C^k$-function $\Psi_2 : \R \times [0,\varrho] \to \R$ such that
	\begin{enumerate}
		\item $\Psi_2(\mu_2, r_2) = \tilde \kappa_{12} \left( \Psi_1(r_1, \eps_1) \right)$ for all $\mu_2 < 0$;
		\item $\Psi_2(\mu_2, r_2) = \tilde \kappa_{23}^{-1} \left( \Psi_3(r_3, \eps_3) \right)$ for all $\mu_2 > 0$;
		\item The two-dimensional manifold
		\[
		\mathcal M_2^a := \left\{ \left(\Psi_2(\mu_2,r_2) , \mu_2, r_2 \right) : \mu_2 \in [-\Omega, \Omega], r_2 \in [0,\varrho] \right\}
		\]
		is locally invariant for system \eqref{eq:K2_eqns_transcritical}.
	\end{enumerate}
\end{lemma}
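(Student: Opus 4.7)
The plan is to mirror the proof of Lemma \ref{lem:K2_connection_pitchfork} almost verbatim, since the essential structure is the same: $\Psi_2$ will be obtained by piecing together a definition on $\mu_2 < 0$ inherited from the entry chart with its forward extension under the planar slow-fast ODE flow that governs spatially homogeneous solutions of \eqref{eq:K2_eqns_transcritical}. Concretely, I would first \emph{define} $\Psi_2$ on $\{\mu_2 < 0\}\times[0,\varrho]$ via Assertion 1, by setting
\[
\Psi_2(\mu_2,r_2) := \tilde\kappa_{12}\bigl(\Psi_1(r_1,\eps_1)\bigr),
\]
and rewriting the right-hand side in $(\mu_2,r_2)$-coordinates using the change-of-coordinate formulae $\kappa_{12}^{(2)}$ in \eqref{eq:kappa_ij} with $s=2$, i.e.\ $u_1 = (-\mu_2)^{-1}u_2$, $r_1 = -\mu_2 r_2$, $\eps_1 = \mu_2^{-2}$. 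Smoothness on $\{\mu_2<0\}\times[0,\varrho]$ is inherited from $\Psi_1 \in \mathcal C^k(\mathcal E_0,\widetilde{\mathcal Z})$ together with the fact that $\kappa_{12}$ is a diffeomorphism away from $\mu_2=0$.

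Next, I would extend $\Psi_2$ forward in $\mu_2$ under the flow of the planar (spatially independent) reduction of \eqref{eq:K2_eqns_transcritical}, namely
\begin{equation*}
\begin{split}
\Psi_2' &= \mu_2\Psi_2 - \Psi_2^2 + \lambda + r_2\mathcal R_2(\Psi_2,\mu_2,r_2), \\
\mu_2' &= 1 ,
\end{split}
\end{equation*}
taking as initial data $\Psi_2(-\Omega,r_2)$ for each fixed $r_2\in[0,\varrho]$. Since $r_2$ is a parameter and the right-hand side is $\mathcal C^k$, the extension exists as long as the solution remains bounded and defines a $\mathcal C^k$ function of $(\mu_2,r_2)$. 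Local invariance of $\mathcal M_2^a$ under \eqref{eq:K2_eqns_transcritical} is then immediate from the construction, since graphs of spatially homogeneous solutions are invariant by definition, giving Assertion 3.

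The main step, and the key place where I would invoke the ODE analysis of \cite{Krupa2001c}, is Assertion 2: the verification that this forward extension actually matches $\tilde\kappa_{23}^{-1}(\Psi_3(r_3,\eps_3))$ on $\{\mu_2>0\}$. This reduces to the matching problem for the planar transcritical slow passage studied in \cite[Sec.~2]{Krupa2001c}: the $K_1$ attracting slow manifold extends through the rescaling chart and, under the sign condition $\lambda > 0$ from Theorem \ref{thm:transcritical}, connects smoothly to the attracting slow manifold branch tracked in $K_3$ (which after blow-down corresponds to $\mathcal S_a^+$, cf.\ Remark \ref{rem:CM_restriction_transcritical}). By uniqueness of the formal series representations of $\Psi_1$ and $\Psi_3$, together with the fact that $\Psi_2|_{r_2>0}$ is smoothly determined by its values at $\mu_2=-\Omega$ via the ODE flow, the two expressions $\tilde\kappa_{12}(\Psi_1)$ and $\tilde\kappa_{23}^{-1}(\Psi_3)$ must agree on the overlap region after blow-down and therefore coincide on $\{\mu_2>0\}$ in the $K_2$ coordinates.

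The anticipated obstacle is purely a matter of bookkeeping rather than new analysis: one has to check that the matching in \cite{Krupa2001c} -- which is phrased in terms of the Riccati-type equation governing the rescaling chart dynamics -- produces exactly the object $\tilde\kappa_{23}^{-1}(\Psi_3)$ in our notation, and that the bound $\varrho$ can be chosen small enough uniformly in $\mu_2\in[-\Omega,\Omega]$ so that $\Psi_2$ stays within the domain where $\mathcal R_2$ is controlled. Both are direct consequences of the ODE analysis and the $\mathcal C^k$-smoothness of the reduction functions already established in Lemmas \ref{lem:K1_center_manifold_transcritical} and \ref{lem:K3_center_manifold_transcritical}.
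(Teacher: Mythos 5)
Your proposal is correct and follows essentially the same route as the paper: the paper's proof also defines $\Psi_2$ on $\{\mu_2<0\}$ via Assertion 1 and extends it under the flow of the planar ODE system governing spatially homogeneous solutions of \eqref{eq:K2_eqns_transcritical}, with the connection to $\tilde\kappa_{23}^{-1}(\Psi_3)$ for $\lambda>0$ supplied by the transcritical analysis in \cite{Krupa2001c}. Your additional remarks on smoothness, uniqueness of the formal expansions, and the choice of $\varrho$ simply spell out details the paper leaves implicit.
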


\begin{proof}
	This follows directly from the analysis in \cite{Krupa2001c} if we define $\Psi_2$ using the expression in Assertion 1 and then extend it under the flow of the planar ODE system
	\[
	\begin{split}
		\Psi_2' &= \mu_2 \Psi_2 - \Psi_2^2 + \lambda + r_2 \mathcal R_2(\Psi_2, \mu_2, r_2) , \\
		\mu_2' &= 1 ,
	\end{split}
	\]
	which governs the spatially independent solutions of system \eqref{eq:K2_eqns_transcritical}.
\end{proof}

Lemma \ref{lem:K2_connection_transcritical} establishes a connection between the center manifolds $\mathcal M_1^a$ and $\mathcal M_3^a$ identified in the entry and exit charts $K_1$ and $K_3$ respectively, under the condition that $\lambda > 0$. We now show that solutions of the PDE system \eqref{eq:K2_eqns_transcritical} `track' solutions on $\mathcal M_2^a$. This is formulated in terms of the transition map $\pi_2 : \Sigma_2^{\textup{in}} \to \Sigma_2^\textup{out}$ induced by system \eqref{eq:K2_eqns_transcritical} with $\lambda > 0$.

\begin{proposition}
	\label{prop:K2_dynamics_transcritical}
	Fix $\lambda > 0$. For $\beta_-, R > 0$ sufficiently small but fixed, the map $\pi_2$ is well-defined and given by
	\[
	\pi_2 : 
	\begin{pmatrix}
		u_2 \\
		-\Omega \\
		r_2
	\end{pmatrix}
	\mapsto 
	\begin{pmatrix}
		\Psi_2(\Omega, r_2) + \mathcal Q_2(u_2, r_2) \\
		\Omega \\
		r_2
	\end{pmatrix}
	\]
	where $\| \mathcal Q_2(u_2,r_2) \|_{\widetilde{\mathcal Z}} \leq C \| u_2 - \Psi_2(-\Omega, r_2) \|_{\widetilde{\mathcal Z}}$ for some constant $C > 0$.
\end{proposition}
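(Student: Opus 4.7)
The plan is to follow the strategy used in the proof of Proposition \ref{prop:K2_dynamics} essentially verbatim, with the only substantive change being that the cubic nonlinearity $-u_2^3$ is replaced by the quadratic nonlinearity $-u_2^2$. Concretely, I would introduce the error function
\[
E(\cdot, t_2, r_2) := u_2(\cdot, t_2, r_2) - \Psi_2(\mu_2(t_2), r_2),
\]
where $u_2$ is a solution to \eqref{eq:K2_eqns_transcritical} with initial condition in $\Sigma_2^{\textup{in}}$, $\mu_2(t_2) = -\Omega + t_2$, and $\Psi_2$ is the function described in Lemma \ref{lem:K2_connection_transcritical}. Differentiating and using the fact that $\Psi_2$ satisfies the planar ODE for spatially homogeneous solutions yields
\[
\partial_{t_2} E = \partial_{x_2}^2 E + \mathcal K(E,\Psi_2)\, E,
\]
where $\mathcal K(E,\Psi_2) := \mu_2(t_2) - 2\Psi_2 - E + r_2\, \widetilde{\mathcal R}_2(E,\Psi_2,\mu_2,r_2)$ for a suitable remainder $\widetilde{\mathcal R}_2$; note the coefficient $2\Psi_2$ in place of $3\Psi_2^2 + 3E\Psi_2 + E^2$ which appeared in the pitchfork case due to the quadratic (as opposed to cubic) nonlinearity.

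Next I would apply the variation of constants formula with the heat semigroup $(\e^{t_2\Lambda_2})_{t_2\geq 0}$ generated by $\Lambda_2 = \partial_{x_2}^2$, writing
\[
E(\cdot,t_2) = \e^{t_2\Lambda_2} E(\cdot,0) + \int_0^{t_2} \e^{(t_2-s_2)\Lambda_2}\, \mathcal K(E(\cdot,s_2),\Psi_2(s_2))\, E(\cdot,s_2)\, \dd s_2.
\]
Defining $\tilde T_2 := \inf\{ T_2,\, \sup\{ t_2 > 0 : \|E(\cdot,t_2)\|_{\widetilde{\mathcal Z}} \leq C_0\}\}$ for a constant $C_0$ determined by $\beta_-$, on $[0,\tilde T_2]$ the quantity $\Psi_2(t_2)$ is bounded by the ODE analysis in \cite{Krupa2001c} and $E(\cdot,t_2)$ is bounded by construction. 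The algebra property of $\widetilde{\mathcal Z}$ (Lemma \ref{lem:algebra}) then yields a uniform bound $\|\mathcal K(E,\Psi_2)\|_{\widetilde{\mathcal Z}} \leq C_1$ on this interval. Combining this with the semigroup estimate $\|\e^{t_2\Lambda_2}\|_{\widetilde{\mathcal Z}\to\widetilde{\mathcal Z}} \leq 1$ from Lemma \ref{lem:semigroup} and Gr\"onwall's inequality gives
\[
\|E(\cdot,t_2)\|_{\widetilde{\mathcal Z}} \leq \e^{C_1 t_2}\, \|E(\cdot,0)\|_{\widetilde{\mathcal Z}}, \qquad t_2 \in [0,\tilde T_2].
\]

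Fixing $\beta_- > 0$ small enough that $\e^{C_1 T_2}\|E(\cdot,0)\|_{\widetilde{\mathcal Z}} \leq C_0$ (which is possible since $T_2 = 2\Omega$ is fixed and finite) ensures $\tilde T_2 = T_2$, and identifying $\mathcal Q_2(u_2, r_2) := E(\cdot,T_2)$ then yields the desired bound with $C = \e^{C_1 T_2}$. Assertion (i) of Lemma \ref{lem:K2_connection_transcritical} gives $u_2(\cdot,T_2) = \Psi_2(\Omega,r_2) + \mathcal Q_2(u_2,r_2)$, which matches the claimed form of $\pi_2$. The only conceptual step that requires care is the standard bootstrap argument for $\tilde T_2$, which closes because the exponential growth factor $\e^{C_1 T_2}$ is an $\eps$-independent constant; no new obstacles arise relative to the pitchfork case, so the proof can be abbreviated by referring back to the analogous argument in Proposition \ref{prop:K2_dynamics}.
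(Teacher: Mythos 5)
Your proposal is correct and follows essentially the same route as the paper's own proof: the same error function $E = u_2 - \Psi_2(\mu_2(t_2),r_2)$, the same evolution equation with $\mathcal K(E,\Psi_2) = \mu_2(t_2) - 2\Psi_2 - E + r_2\widetilde{\mathcal R}_2$, variation of constants with the heat semigroup, the algebra property of $\widetilde{\mathcal Z}$, the bound $\|\e^{t_2\Lambda_2}\|_{\widetilde{\mathcal Z}\to\widetilde{\mathcal Z}}\leq 1$, a Gr\"onwall estimate on $[0,\tilde T_2]$, and the choice of $\beta_-$ closing the bootstrap so that $\tilde T_2 = T_2$. One cosmetic remark: the final identity $u_2(\cdot,T_2) = \Psi_2(\Omega,r_2) + E(\cdot,T_2)$ follows directly from the definition of $E$ together with $\mu_2(T_2) = \Omega$, rather than from Assertion (i) of Lemma \ref{lem:K2_connection_transcritical}, which concerns the matching with $\Psi_1$ for $\mu_2 < 0$.
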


\begin{proof}
	Following the arguments given in the proof of Proposition \ref{prop:K2_dynamics}, we introduce an error function
	\[
	E(\cdot, t_2, r_2) := \tilde u_2(\cdot, t_2, r_2) - \Psi_2(\mu_2(t_2), r_2),
	\]
	where $\tilde u(\cdot, t_2, r_2)$ denotes a solution to system \eqref{eq:K2_eqns_transcritical} on the time interval $[0,T_2]$, $\mu_2(t_2) = -\Omega + t_2$, and $\Psi_2$ is the function described in Lemma \ref{lem:K2_connection_transcritical}. Differentiating with respect to $t_2$ leads to the following evolution equation for $E$:
	\[
	\partial_{t_2} E = \partial_{x_2}^2 E + \mathcal K(E, \Psi_2) E ,
	\]
	where $\mathcal K(E, \Psi_2) := \mu_2(t_2) - 2 \Psi_2 - E + r_2 \widetilde{\mathcal R}_2(E, \Psi_2, \mu_2, r_2)$, where
	\[
	\widetilde{\mathcal R}_2(E, \Psi_2, \mu_2, r_2) := \frac{\mathcal R_2(E + \Psi_2, \mu_2, r_2) - \mathcal R_2(\Psi_2, \mu_2, r_2)}{E} ,
	\]
	which by Taylor's theorem is well-defined as $E \to 0$.
	
	Let 
	\[
	\tilde T_2 := \inf \left\{ T_2, \sup \left\{ t_2 \geq 0 : \| E(\cdot, t_2) \|_{\widetilde{\mathcal Z}} \leq C_2 \right\} \right\} ,
	\]
	where the constant $C_2 \geq \| E(\cdot, 0) \|_{\widetilde{\mathcal Z}} \geq 0$ is fixed but small enough to ensure that $\tilde u_2(\cdot, 0, r_2)$ and $\Psi_2(- \Omega, r_2)$ correspond to initial conditions in $\Sigma_2^{\textup{in}}$, as well as the validity of the Taylor expansion about $E = 0$ which allows us to define the function $\widetilde R_2$ above. On $t_2 \in [0, \tilde T_2]$, the variation of constants formula yields
	\[
		E (\cdot, t_2) = \e^{t_2 \Lambda_2} E(\cdot, 0)
		+ \int_0^{t_2} \e^{(t_2 - s_2) \Lambda_2} \mathcal K \left( E(\cdot, s_2), \Psi_2(s_2) \right) E (\cdot, s_2) \dd s_2 ,
	\]
	where $(\e^{t_2 \Lambda_2})_{t_2 \geq 0}$ denotes the (heat) semigroup generated by $\Lambda_2 := \partial_{x_2}^2$. Note that for notational simplicity, we omit the dependence on $r_2$ in our notation.
	
	We want a bound for $\| E(\cdot, t_2) \|_{\widetilde{\mathcal Z}}$. In order to obtain one, notice first that $\Psi_2(t_2)$ is bounded (due to the analysis in \cite{Krupa2001c}), and that $E(\cdot, t_2)$ is bounded (by the definition of $\tilde T_2$). Therefore,
	\[
	\| \mathcal K \left( \tilde E(\cdot, t_2), \Psi_2(t_2) \right) \|_{\widetilde{\mathcal Z}} \leq C_1
	\]
	for some constant $C_1 > 0$, for all $t_2 \in [0,\tilde T_2]$ (we also use the fact that $\widetilde{\mathcal Z}$ is an algebra -- we refer again to Lemma \ref{lem:algebra} in Appendix \ref{app:wieghted_spaces} -- to control the higher order terms). Moreover, the operator norm on the semigroup obeys the same bound as in \eqref{eq:operator_norm} for all $t_2 \in [0,\tilde T_2]$. Using a triangle inequality together with both of these bounds and a Gr\"onwall inequality, we obtain the estimate
	\[
	\| E(\cdot, t_2) \|_{\widetilde{\mathcal Z}} \leq
	\e^{C_1 t_2} \| E(\cdot, 0) \|_{\widetilde{\mathcal Z}} 
	\]
	for all $t_2 \in [0, \tilde T_2]$. Fixing $\beta_- > 0$ (and therefore also $C_0$) such that
	\[
	\| E(\cdot, 0) \|_{\widetilde{\mathcal Z}} \leq C_0 \e^{- C_1 C_2 T_2} 
	\]
	ensures that the corresponding bound for $E(\cdot, t_2)$ holds for all $t_2 \in [0,\tilde T_2]$ with $\tilde T_2 = T_2$.
	
	The preceding arguments show that
	\[
	u_2(\cdot, T_2) = \Psi_2(T_2) + E(\cdot, T_2) ,
	\]
	where $\| E(\cdot, T_2) \|_{\widetilde{\mathcal Z}} \leq C \| E(\cdot, 0) \|_{\widetilde{\mathcal Z}}$. Equating $\mathcal Q_2(u_2, r_2)$ with $E(\cdot, T_2)$ concludes the proof.
\end{proof}

The key implication of Proposition \ref{prop:K2_dynamics_transcritical} and its proof is that solutions to the PDE system \eqref{eq:K2_eqns_transcritical}, with fixed $\lambda > 0$ and initial conditions that are close to the manifold $\mathcal M_2^a$, remain close to $\mathcal M_2^a$ up until their intersection with $\Sigma_2^{\textup{out}}$.

\subsubsection{Proving Theorem \ref{thm:transcritical}}

We are now in a position to prove Theorem \ref{thm:transcritical}. Using arguments which are very similar to those used in the proof of Theorem \ref{thm:pitchfork} in Section \ref{subsec:pitchfork_proof}, one can show that Propositions \ref{prop:K1_dynamics_transcritical}, \ref{prop:K3_dynamics_transcritical} and \ref{prop:K2_dynamics_transcritical} imply that the transition map
\[
\pi := \pi_3 \circ \kappa_{23} \circ \pi_2 \circ \kappa_{12} \circ \pi_1 : \Sigma_1^{\textup{in}} \to \Sigma_3^{\textup{out}} ,
\]
is well-defined and given by
\[
\pi : 
\begin{pmatrix}
	u_1 \\
	\rho \\
	\eps_1
\end{pmatrix}
\mapsto
\begin{pmatrix}
	\Psi_3(\rho, \eps_1) + \mathcal O(\e^{- \gamma / 2 \eps_1}) \\
	\rho \\
	\eps_1
\end{pmatrix} .
\]
Applying the blow-down transformation to the right-hand side -- specifically the relations $u = \rho u_3$ and $\eps_1 = \eps / \rho^2$ -- yields
\[
	u(x,T) = \rho \Psi_3(\rho, \eps / \rho^2) + \mathcal O( \e^{- \gamma \rho^2 / 2 \eps} )
	= \phi(\rho, \eps) + \mathcal O( \e^{- \gamma \rho^2 / 2 \eps} ) ,
\]
where the local coincidence of the center manifold $\mathcal M_3^a|_{r_3 > 0}$ and the slow manifold $\mathcal S_{a}^+ \times \{0\}$, as described in Remark \ref{rem:CM_restriction_transcritical}, is used in order to obtain the equality $\rho \Psi_3(\rho, \eps / \rho^2) = \phi(\rho,\eps)$. This concludes the proof.

%

\section{Summary and outlook}
\label{sec:summary_and_outlook}

The primary aim of this article has been to show that the geometric blow-up method can be used to obtain rigorous and geometrically informative information about the dynamics close to dynamic bifurcations in scalar reaction-diffusion equations in the general form \eqref{eq:main_general}, which are posed on the entire real line $x \in \R$. We considered two particular cases, corresponding to a dynamic transcritical or pitchfork bifurcation in the reaction dynamics. After deriving local normal forms in each case in Lemma \ref{lem:normal_forms}, we proved a number of exchange of stability properties, as described in Theorems \ref{thm:transcritical} and \ref{thm:pitchfork}, which are dynamically very similar to the exchange of stability properties that were proven for the corresponding problems on bounded spatial domains using upper and lower solutions in \cite{Butuzov2000}. The results in Theorems \ref{thm:transcritical} and \ref{thm:pitchfork} are of independent interest -- we refer back to the list of implications a)-f) in the discussion towards the end of Section \ref{sec:main_results} -- however, we consider the primary contribution of this article to be the method of proof itself. In light of the notable success of approaches to the study of dynamic bifurcations and singularities in fast-slow systems in the ODE setting (we refer again to \cite{Jardon2019b} and the many references therein), we are hopeful that the geometric blow-up based approach for scalar reaction-diffusion systems developed herein may provide a blueprint for a rigorous, geometrically informative and constructive approach to the study of dynamic bifurcations in PDEs more generally.

Relative to the recent adaptations and applications of the geometric blow-up method in the PDE setting in \cite{Engel2024,Engel2020,Jelbart2022,Jelbart2023,Zacharis2023}, the problems considered in this work are `dynamically simpler' because of the simple form of the slow equation $\dot \mu = \eps$, and because we do not consider more complicated spatial dynamics associated with e.g.~pattern formation. As a result, the proofs that we presented in Section \ref{sec:proofs} can more easily be compared and contrasted with the geometric blow-up based proofs for the corresponding ODE problems presented in \cite{Krupa2001c}. We would like to reiterate the following points in particular:
\begin{itemize}
	\item[(a)] The blow-up transformation $\Phi$ defined in \eqref{eq:blow-up_map} is applied directly to the extended PDE system \eqref{eq:main_system_extended}, leading to PDE systems in the entry, rescaling and exit charts $K_1$, $K_2$ and $K_3$ respectively after a suitable desingularisation in both time and space (we refer again to Remarks \ref{rem:desingularisation} and \ref{rem:desingularisation_2}).
	\item[(b)] The linearisation about the steady states corresponding to the extension of attracting critical manifolds up to the blow-up manifold features a spectral gap; we refer to Lemma \ref{lem:spectrum_K1} and Figure \ref{fig:spectral_gap}. This resolves the marginally stable situation observed in the blown-down problem with $\mu = \eps = 0$, which is shown in Figure \ref{fig:spectrum_static}. This is analogous to one of the most important benefits of geometric blow-up based approaches in the ODE setting, namely, that a loss of hyperbolicity can often be resolved within the blown-up space; we refer again to Remark \ref{rem:resolution} for details.
	\item[(c)] The spectral gap property described in (b) allowed for the application of center manifold theory for evolution equations in Banach spaces. We applied these results, as they are formulated in \cite{Haragus2010}, in order to identify $2$-dimensional, strongly attracting local center manifolds which in turn allowed us to provide a rigorous and geometric description of the local dynamics within the entry and exit charts $K_1$ and $K_3$.
	\item[(d)] The extension of the local center manifold $\mathcal M_1^a$ in chart $K_1$ into and through the rescaling chart $K_2$ is described by Lemma \ref{lem:K2_connection_pitchfork} in the pitchfork case, and by Lemma \ref{lem:K2_connection_transcritical} in the transcritical case. In both cases, the details of this extension follow directly from the analysis of the corresponding ODE analysis in \cite{Krupa2001c}, because of the fact that $\mathcal M_1^a$ (and therefore also its extension) only contains spatially independent solutions.
	\item[(e)] We showed that PDE solutions in the rescaling chart $K_2$ `track' the extended manifold described above in (d), using direct a priori arguments to provide a bound on an error function which ensures that solutions stay within a tubular neighbourhood for finite time intervals; this is described in Propositions \ref{prop:K2_dynamics} and \ref{prop:K2_dynamics_transcritical}. Here it is also worthy to emphasise that the proofs of Propositions \ref{prop:K2_dynamics} and \ref{prop:K2_dynamics_transcritical} are inspired by established and relatively constructive approaches to the validation of modulation equations; we refer again to \cite{Schneider2017} and the many references therein.
\end{itemize}
With the possible exception of (d), the features raised in (a)-(e) above do not appear to rely in any critical way on the specific structure of the equations (although the precise manner in which a spectral degeneracy may be resolved is of course expected to vary on a case-by-case basis).

\

Looking forward, there are a number of analytical obstacles which remain to be overcome. The question of precisely how to incorporate $u$ into the geometry of the blown-up space in situations where a `fast escape' occurs (e.g.~near a fold, or in the transcritical case considered in this work with $\lambda < 0$) is still open; we refer again to Remark \ref{rem:spherical_blow-up}. The question of whether canard-type dynamics for $\lambda = \mathcal O(\eps)$ can be understood using geometric blow-up is also open. Of course, one could also allow for more complicated slow dynamics, different singularities, or more complicated background states. These and other related questions remain for future work.

\section*{Acknowledgements}

This work is an extension of preliminary work in the MSc thesis \cite{Martinez-Sanchez2024}. SJ would like to thank Peter Szmolyan for helpful discussions, and in particular for suggestions which led to the treatment of the dynamics within the rescaling chart(s) $K_2$. CK would like to thank the VolkswagenStiftung for support via a Lichtenberg Professorship. AM was supported by the European Union under the ERC Consolidator Grant No 101123223 (SSNSD), and by the AEI project PID2021-125021NAI00 funded by MICIU/AEI/10.13039/501100011033 and by FEDER (Spain).









\appendix

\section*{Appendix}

The following appendices provide background and additional material referred to within the main body of the paper. Appendix \ref{app:wieghted_spaces} is devoted to functional-analytic properties of the weighted spaces $\widetilde{\mathcal Y}$ and $\widetilde{\mathcal Z}$, Appendix \ref{app:center_manifold_theory} contains the hypotheses that need to be checked in order to apply the formulation of center manifold theory in \cite{Haragus2010}, and Appendix \ref{app:bounds} is devoted to the proof of Lemma \ref{lem:resolvant_bound}, which is necessary for the proof of Lemma \ref{lem:resolvent_K1}.

\section{Functional-analytic properties of the weighted spaces $\widetilde{\mathcal Y}$ and $\widetilde{\mathcal Z}$}
\label{app:wieghted_spaces}

In the following we state and prove a number of basic properties of the polynomially weighted spaces $\widetilde{\mathcal Y}$ and $\widetilde{\mathcal Z}$ defined in \eqref{eq:tilde_Y} and \eqref{eq:tilde_Z} respectively, which are used extensively throughout the manuscript.

\begin{lemma}
	\label{lem:density}
	The weighted spaces $\widetilde{\mathcal Y}$ and $\widetilde{\mathcal Z}$ are dense in $\widetilde{\mathcal X}$.
\end{lemma}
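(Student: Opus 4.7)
Since $\widetilde{\mathcal Z} \hookrightarrow \widetilde{\mathcal Y}$ (every $v \in \widetilde{\mathcal Z}$ is also in $\widetilde{\mathcal Y}$, trivially), it suffices to establish the density of $\widetilde{\mathcal Z}$ in $\widetilde{\mathcal X}$ with respect to the supremum norm; density of $\widetilde{\mathcal Y}$ will then follow immediately. The plan is to identify a convenient dense subclass, namely
\[
\widetilde{\mathcal D} := \{ w \in \mathcal C^\infty(\R) : w \text{ is constant outside a compact set} \} ,
\]
and show that $\widetilde{\mathcal D} \subseteq \widetilde{\mathcal Z}$ and that $\widetilde{\mathcal D}$ is dense in $\widetilde{\mathcal X}$.

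The first inclusion is immediate: if $w \in \widetilde{\mathcal D}$, then both $w^{(1)}$ and $w^{(2)}$ are smooth and compactly supported, so the weighted suprema $\|(1 + |\cdot|) w^{(1)}\|_\infty$ and $\|(1 + (\cdot)^2) w^{(2)}\|_\infty$ are finite, and $\|w\|_\infty < \infty$ because $w$ is bounded on a compact set and constant outside it. Hence $w \in \widetilde{\mathcal Z}$.

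For density of $\widetilde{\mathcal D}$ in $\widetilde{\mathcal X}$, fix $v \in \widetilde{\mathcal X}$ with limits $v_{\pm\infty}$ and $\delta > 0$. First, choose $R > 0$ large enough that $|v(x) - v_{+\infty}| < \delta/3$ for all $x \geq R$ and $|v(x) - v_{-\infty}| < \delta/3$ for all $x \leq -R$. Define a continuous function $w_0 : \R \to \R$ by setting $w_0(x) := v(x)$ for $x \in [-R,R]$, $w_0(x) := v_{+\infty}$ for $x \geq R+1$, $w_0(x) := v_{-\infty}$ for $x \leq -R-1$, and by linearly interpolating between the matching values on $[R,R+1]$ and $[-R-1,-R]$. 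A straightforward triangle inequality using that $w_0(x)$ is a convex combination of two values within $\delta/3$ of $v_{\pm\infty}$ on the interpolation intervals yields $\|w_0 - v\|_\infty \leq 2\delta/3$. Note also that $w_0$ is constant outside the compact interval $[-R-1, R+1]$.

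Next, mollify: let $\phi \in \mathcal C^\infty_c(\R)$ be a standard non-negative mollifier with $\int \phi = 1$, set $\phi_\eps(\cdot) = \eps^{-1} \phi(\cdot/\eps)$, and define $w_\eps := w_0 * \phi_\eps$. Then $w_\eps \in \mathcal C^\infty(\R)$, and since $w_0$ is constant outside $[-R-1, R+1]$ the convolution $w_\eps$ is constant outside $[-R-1-\eps, R+1+\eps]$, so $w_\eps \in \widetilde{\mathcal D}$. Because $w_0$ is bounded and uniformly continuous on $\R$ (being a continuous function that is constant outside a compact interval), standard properties of mollification give $\|w_\eps - w_0\|_\infty < \delta/3$ for all sufficiently small $\eps > 0$. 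Combining the two estimates yields $\|w_\eps - v\|_\infty < \delta$, and since $\delta > 0$ was arbitrary and $w_\eps \in \widetilde{\mathcal D} \subseteq \widetilde{\mathcal Z} \subseteq \widetilde{\mathcal Y}$, both inclusions are dense.

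The argument is essentially routine; the only point that requires a modicum of care is ensuring that the approximating function is genuinely in $\widetilde{\mathcal Z}$ (not merely in $\mathcal C^2_{\textup{b,unif}}(\R)$), which is why we force it to be \emph{constant} outside a compact set rather than merely asymptotically constant — this makes the polynomial weights $(1+|\cdot|)$ and $(1+(\cdot)^2)$ act only on compactly supported derivatives, and hence trivially yield finite weighted norms.
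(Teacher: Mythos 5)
Your proof is correct and follows essentially the same strategy as the paper's: approximate $v \in \widetilde{\mathcal X}$ by a function that agrees with (a smooth approximation of) $v$ on a large compact set and equals the constants $v_{\pm\infty}$ in the far field, so that the polynomial weights only see compactly supported derivatives. The only difference is cosmetic — you truncate first and mollify afterwards, spelling out the transition-region estimate, whereas the paper invokes density of $\mathcal C^2_{\textup{b,unif}}(\R)$ in $\mathcal C^0_{\textup{b,unif}}(\R)$ and then applies a cut-off — so your write-up is, if anything, slightly more explicit.
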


\begin{proof}
	Let $\widetilde{\mathcal W}$ be either $\widetilde{\mathcal Y}$ or $\widetilde{\mathcal Z}$. We show that for any $u \in \widetilde{\mathcal X}$ and $\epsilon > 0$, there exists a function $u \in \widetilde{\mathcal W}$ such that $\| u - v \|_\infty < \epsilon$. The constant $\epsilon$ should be distinguished from the small parameter $\eps$ used throughout the main body of the paper.
	
	Fix $u \in \widetilde{\mathcal X}$ and an arbitrarily small $\epsilon > 0$. Since $\widetilde{\mathcal X} \subset \mathcal C^0_{\textup{b,unif}}(\R)$ and $\mathcal C^2_{\textup{b,unif}}(\R)$ is dense in $\mathcal C^0_{\textup{b,unif}}(\R)$, there exists a function $\tilde v \in \mathcal C^2_{\textup{b,unif}}(\R)$ such that $\| u - \tilde v \|_\infty < \epsilon$. The idea is to define $v := \phi_M \tilde v$, where $\phi_M$ is a $C^\infty$ `cut-off-like' function which is defined so that $v(x)$ satisfies
	\[
	v(x) = 
	\begin{cases}
		u_{-\infty} , & x < - M - 1, \\
		\tilde v(x) , & x \in (-M, M), \\
		u_{+\infty} , & x > M + 1,
	\end{cases}
	\]
	where $u_{\pm \infty} := \lim_{x \to \pm \infty} u(x)$. Notice that $v \in \widetilde{\mathcal W}$. Choosing $M$ sufficiently large and assuming that $\phi_M$ has been suitably defined for $|x| \in [M, M+1]$ ensures that $\| u - v \|_\infty < \epsilon$, as required.
\end{proof}
%

\begin{lemma}
	\label{lem:algebra}
	$\widetilde{\mathcal Y}$ and $\widetilde{\mathcal Z}$ are algebras, i.e.~if $u, v \in \widetilde{\mathcal Y}$ ($u, v \in \widetilde{\mathcal Z}$), then $u v \in \widetilde{\mathcal Y}$ ($u v \in \widetilde{\mathcal Z}$).
\end{lemma}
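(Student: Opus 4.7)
The plan is to verify the algebra property directly from the product (Leibniz) rule, exploiting the key elementary inequality $1 + x^2 \leq (1 + |x|)^2$ to absorb the quadratic spatial weight into the first-derivative weight when it falls on a cross term. Since $\mathcal C^k_{\textup{b,unif}}(\R)$ is itself an algebra for $k = 1, 2$, the qualitative regularity $uv \in \mathcal C^k_{\textup{b,unif}}(\R)$ is automatic from $u, v \in \mathcal C^k_{\textup{b,unif}}(\R)$; the substantive content is only the boundedness of $\|uv\|_{\widetilde{\mathcal Y}}$ or $\|uv\|_{\widetilde{\mathcal Z}}$.

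For $\widetilde{\mathcal Y}$, I would fix $u, v \in \widetilde{\mathcal Y}$ and write $(uv)' = u'v + u v'$. Taking the $\|\cdot\|_\infty$ of $(1 + |\cdot|)(uv)'$ and using the triangle inequality yields
\[
\| (1+|\cdot|) (uv)' \|_\infty \leq \| (1+|\cdot|) u' \|_\infty \| v \|_\infty + \| u \|_\infty \| (1+|\cdot|) v' \|_\infty ,
\]
while $\|uv\|_\infty \leq \|u\|_\infty \|v\|_\infty$. Summing these estimates yields the product estimate $\|uv\|_{\widetilde{\mathcal Y}} \leq C \|u\|_{\widetilde{\mathcal Y}} \|v\|_{\widetilde{\mathcal Y}}$ for an absolute constant $C > 0$, proving that $\widetilde{\mathcal Y}$ is an algebra.

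For $\widetilde{\mathcal Z}$, I would proceed identically for the $\|\cdot\|_\infty$ and $(1+|\cdot|)(\cdot)'$ components (noting the trivial embedding $\widetilde{\mathcal Z} \hookrightarrow \widetilde{\mathcal Y}$), and then treat the second-derivative component via $(uv)'' = u''v + 2 u' v' + u v''$. The outer terms are handled exactly as before; the only subtle term is the cross term $2 u' v'$, for which I would apply the pointwise bound $1 + x^2 \leq (1+|x|)^2$ to factor the weight between the two derivatives:
\[
\bigl| (1+x^2) \cdot 2 u'(x) v'(x) \bigr| \leq 2 (1+|x|) |u'(x)| \cdot (1+|x|) |v'(x)| .
\]
Taking the supremum and combining with the other two terms gives
\[
\| (1+(\cdot)^2) (uv)'' \|_\infty \leq \| (1+(\cdot)^2) u'' \|_\infty \| v \|_\infty + 2 \| (1+|\cdot|) u' \|_\infty \| (1+|\cdot|) v' \|_\infty + \| u \|_\infty \| (1+(\cdot)^2) v'' \|_\infty ,
\]
so that $\|uv\|_{\widetilde{\mathcal Z}} \leq C \|u\|_{\widetilde{\mathcal Z}} \|v\|_{\widetilde{\mathcal Z}}$. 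The only mildly non-routine step is the factorisation of the quadratic weight across the cross term; everything else reduces to Leibniz plus the triangle inequality, so I do not anticipate any genuine obstacle.
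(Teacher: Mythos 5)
Your proposal is correct and follows essentially the same route as the paper's proof: expand $(uv)'$ and $(uv)''$ via the Leibniz rule, apply triangle inequalities, and absorb the quadratic weight on the cross term $2u'v'$ using $1 + x^2 \leq (1+|x|)^2$, yielding $\|uv\|_{\widetilde{\mathcal Z}} \leq C \|u\|_{\widetilde{\mathcal Z}} \|v\|_{\widetilde{\mathcal Z}}$ (the paper gets $C = 3$ and likewise treats $\widetilde{\mathcal Y}$ as the simpler analogous case). No gaps.
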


\begin{proof}
	We prove that $\widetilde{\mathcal Z}$ is an algebra; the proof that $\widetilde{\mathcal Y}$ is an algebra is similar and simpler. Let $u, v \in \widetilde{\mathcal Z}$. In order to show that $u v \in \widetilde{\mathcal Z}$, we need to show that $\|u v\|_{\widetilde{\mathcal Z}} < \infty$. Direct calculations involving a product rule and several triangle inequalities leads to
	\[
	\begin{split}
		\| u v \|_{\widetilde{\mathcal Z}} =&\ \| u v \|_\infty + \| (1 + |\cdot| ) (u v)' \|_\infty + \| (1 + (\cdot)^2) (uv)'' \|_\infty \\
		\leq&\ \| u \|_\infty \| v \|_\infty + \| u \|_\infty \| (1 + |\cdot|) v' \|_\infty + \| v \|_\infty \| (1 + |\cdot|) u' \|_\infty + \| u \|_\infty \| (1 + (\cdot)^2) v'' \|_\infty \\
		&+ 2 \| (1 + (\cdot)^2 ) u' v' \|_\infty + \| v \|_\infty \| (1 + (\cdot)^2) u'' \|_\infty .
	\end{split}
	\]
	The first $5$ terms appearing in the right-hand side of the inequality above can bounded from above by $\| u \|_{\widetilde{\mathcal Z}} \| v \|_{\widetilde{\mathcal Z}}$; this follows from direct calculations if one expands the $\| u \|_{\widetilde{\mathcal Z}} \| v \|_{\widetilde{\mathcal Z}}$ using the definition of the $\widetilde{\mathcal Z}$-norm. Since $1 + x^2 \leq (1 + |x|)^2$ for all $x \in \R$, we can also bound the last term using
	\[
		\| (1 + (\cdot)^2 ) u' v' \|_\infty \leq 
		\| (1 + |\cdot|) u' \|_\infty \| (1 + |\cdot|) v' \|_\infty \\ 
		\leq \| u \|_{\widetilde{\mathcal Z}} \| v \|_{\widetilde{\mathcal Z}} .
	\]
	It follows that
	\[
	\| u v \|_{\widetilde{\mathcal Z}} \leq 3 \| u \|_{\widetilde{\mathcal Z}} \| v \|_{\widetilde{\mathcal Z}} ,
	\]
	implying that $u v \in \widetilde{\mathcal Z}$, as required.
\end{proof}

\begin{lemma}
	\label{lem:norm_equivalence}
	Constant spatial scalings yield equivalent norms in the following sense: for any $u \in \widetilde{\mathcal Y}$ ($u \in \widetilde{\mathcal Z}$) and for any fixed $\alpha > 0$, let $x = \alpha \tilde x$ and $u(x) = \tilde u(\tilde x)$. Then $\| u(\cdot) \|_{\widetilde{\mathcal Y}} \sim \| \tilde u(\cdot) \|_{\widetilde{\mathcal Y}}$ ($\| u(\cdot) \|_{\widetilde{\mathcal Z}} \sim \| \tilde u(\cdot) \|_{\widetilde{\mathcal Z}}$).
\end{lemma}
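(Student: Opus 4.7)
The plan is to simply unpack the definitions after the change of variables and use elementary inequalities between polynomial weights. Under $x = \alpha \tilde x$ with $u(x) = \tilde u(\tilde x)$, one has $\tilde u^{(j)}(\tilde x) = \alpha^j u^{(j)}(\alpha \tilde x)$, and $\|u\|_\infty = \|\tilde u\|_\infty$ is immediate since the two functions have identical ranges. For each weighted derivative term, I would substitute $y = \alpha \tilde x$ under the supremum to rewrite everything in terms of $u$ and its derivatives evaluated at $y \in \R$.

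The first derivative contribution in $\widetilde{\mathcal Y}$ becomes
\[
\| (1 + |\cdot|) \tilde u' \|_\infty = \sup_{\tilde x \in \R} (1 + |\tilde x|) \alpha |u'(\alpha \tilde x)| = \sup_{y \in \R} (\alpha + |y|) |u'(y)| ,
\]
and analogously in $\widetilde{\mathcal Z}$ the second derivative contribution becomes
\[
\| (1 + (\cdot)^2) \tilde u'' \|_\infty = \sup_{y \in \R} (\alpha^2 + y^2) |u''(y)| .
\]
The key elementary observation is then that for any $\alpha > 0$ and any $y \in \R$,
\[
\min(1,\alpha)(1 + |y|) \leq \alpha + |y| \leq \max(1,\alpha)(1 + |y|) ,
\]
\[
\min(1,\alpha^2)(1 + y^2) \leq \alpha^2 + y^2 \leq \max(1,\alpha^2)(1 + y^2) .
\]

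Combining these pointwise bounds with the preceding expressions gives
\[
c_-(\alpha) \| u \|_{\widetilde{\mathcal Y}} \leq \| \tilde u \|_{\widetilde{\mathcal Y}} \leq c_+(\alpha) \| u \|_{\widetilde{\mathcal Y}}
\]
with constants $c_\pm(\alpha) > 0$ depending only on $\alpha$, with entirely analogous bounds in the $\widetilde{\mathcal Z}$ case obtained by summing the contributions of the zeroth, first and second derivatives. I do not foresee a real obstacle here; the only point that requires a bit of attention is that the polynomial weight $1 + x^2$ used for the second derivative in $\widetilde{\mathcal Z}$ is not the square of the weight $1 + |x|$ used for the first, so the comparison has to be done separately for each order, but in both cases the same simple two-term inequality above does the job.
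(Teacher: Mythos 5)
Your argument is correct and is essentially the paper's own proof: both rest on the chain-rule substitution followed by a pointwise comparison of the shifted polynomial weights ($\alpha^{j}+|y|^{j}$ versus $1+|y|^{j}$) with constants $\min(1,\alpha^{\pm j})$, $\max(1,\alpha^{\pm j})$, treating each derivative order separately. The only (immaterial) difference is that you express $\tilde u$'s weighted norms in terms of $u$ in the $y=\alpha\tilde x$ variable, whereas the paper writes $u$'s weighted terms with weights $\alpha^{-1}+|\tilde x|$ and $\alpha^{-2}+\tilde x^{2}$ in the $\tilde x$ variable.
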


\begin{proof}
	As in the proof of Lemma \ref{lem:algebra} above, we only present a proof in $\widetilde{\mathcal Z}$ because the proof in $\widetilde{\mathcal Y}$ is similar and simpler. Fix $\alpha > 0$. We need to show that there are positive constants $c_+ \geq c_- \geq 0$ such that
	\begin{equation}
		\label{eq:norm_equivalence}
		c_- \| \tilde u(\cdot) \|_{\widetilde{\mathcal Z}} \leq 
		\| u(\cdot) \|_{\widetilde{\mathcal Z}} \leq 
		c_+ \| \tilde u(\cdot) \|_{\widetilde{\mathcal Z}} .
	\end{equation}
	Since $\| u(\cdot) \|_\infty = \| \tilde u(\cdot) \|_\infty$ follows directly from the definition of the supremum norm, it remains to consider the terms involving derivatives. Writing $x = \alpha \tilde x$ and $u(x) = \tilde u(\tilde x)$ as in the statement of the lemma, we obtain
	\[
	\begin{split}
		| (1 + |x|) u'(x) | &= \left| (\alpha^{-1} + | \tilde x |) \tilde u'(\tilde x) \right| , \\ 
		| (1 + x^2) u''(x) | &= \left| (\alpha^{-2} + \tilde x^2) \tilde u''(\tilde x) \right| ,
	\end{split}
	\]
	where $\tilde u'(\tilde x) = \partial \tilde u / \partial \tilde x$ and $\tilde u''(\tilde x) = \partial^2 \tilde u / \partial \tilde x^2$. From here one can show directly that
	\[
	C_1^- | (1 + |\tilde x|) u'(\tilde x) | \leq 
	| (1 + |x|) u'(x) | \leq 
	C_1^+ | (1 + |\tilde x|) u'(\tilde x) | ,
	\]
	and
	\[
	C_2^- | (1 + \tilde x^2) u''(\tilde x) | \leq 
	| (1 + x^2) u''(x) | \leq 
	C_2^+ | (1 + \tilde x^2) u''(\tilde x) | ,
	\]
	where the ($\alpha$-dependent) constants are given by
	\[
	\begin{split}
		C_j^- &= C_j^-(\alpha) := \min\{1, \alpha^{-j}\} , \\
		C_j^+ &= C_j^+(\alpha) := \max\{1, \alpha^{-j}\} ,
	\end{split}
	\]
	for $j \in \{1, 2\}$. These inequalities imply that \eqref{eq:norm_equivalence} is satisfied with
	\[
	c_- = 2 \min\{C_1^-, C_2^- \} , \quad  
	c_+ = 2 \max\{C_1^-, C_2^- \} ,
	\]
	thereby concluding the proof.
\end{proof}

Lemma \ref{lem:norm_equivalence} is particularly useful when it comes to changing between local coordinates in charts $K_i$. In particular, in the pitchfork case $s = 3$, it allows us to choose the sections $\Sigma_1^{\textup{out}}$ and $\Sigma_2^{\textup{out}}$ so that $\kappa_{12}(\Sigma_1^{\textup{out}}) \subseteq \Sigma_2^{\textup{in}}$ and $\kappa_{23}(\Sigma_2^{\textup{out}}) \subseteq \Sigma_3^{\textup{in}}$, since solutions in $\Sigma_1^{\textup{out}}$ satisfy
\begin{equation}
	\label{eq:x_scaling1}
	x_1 = r_1 x = \frac{x_2}{\Omega^{1/2}} = \delta^{1/4} x_2 ,
\end{equation}
and solutions in $\Sigma_2^{\textup{out}}$ satisfy
\begin{equation}
	\label{eq:x_scaling2}
	x_2 = r_2 x = \delta^{1/4} x_3 ,
\end{equation}
where we used the change of coordinates formulae in \eqref{eq:kappa_ij} when $s = 3$, together with the definitions of the sections. Since \eqref{eq:x_scaling1} and \eqref{eq:x_scaling2} are constant scalings, Lemma \ref{lem:norm_equivalence} implies that the norms are equivalent upon changing coordinates. Similar arguments show that $\Sigma_1^{\textup{out}}$ and $\Sigma_2^{\textup{out}}$ can be chosen such that $\kappa_{12}(\Sigma_1^{\textup{out}}) \subseteq \Sigma_2^{\textup{in}}$ and $\kappa_{23}(\Sigma_2^{\textup{out}}) \subseteq \Sigma_3^{\textup{in}}$ in the transcritical case $s = 2$.


\

We conclude this section with a bound on the heat semigroup generated by $\partial_{x_2}^2$ in $\widetilde{\mathcal Z}$, which is helpful in proving Propositions \ref{prop:K2_dynamics} and \ref{prop:K2_dynamics_transcritical}.

\begin{lemma}
	\label{lem:semigroup}
	Let $(\e^{t_2 \Lambda_2})_{t_2 \geq 0}$ denote the heat semigroup generated by $\partial_{x_2}^2$. Then 
	\begin{equation}
		\label{eq:operator_norm_2}
		\| \e^{t_2 \Lambda_2} \|_{\widetilde{Z} \to \widetilde{Z}} \leq 1 
	\end{equation}
	for all $t_2 > 0$.
\end{lemma}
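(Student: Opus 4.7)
The plan is to work directly with the explicit heat-kernel representation and bound each of the three terms making up the $\widetilde{\mathcal Z}$-norm separately. Writing $(\e^{t_2 \Lambda_2} u)(x_2) = (G_{t_2} \ast u)(x_2)$ with $G_{t_2}(z) = (4\pi t_2)^{-1/2} \e^{-z^2/(4 t_2)}$, the first term is immediate: since $G_{t_2} \geq 0$ and $\int_\R G_{t_2} = 1$, convolution contracts the sup norm, giving $\| \e^{t_2 \Lambda_2} u \|_\infty \leq \| u \|_\infty$.

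For the two weighted derivative terms I would first establish the commutation relation $\partial_{x_2}^k \e^{t_2 \Lambda_2} u = \e^{t_2 \Lambda_2} \partial_{x_2}^k u$ for $k = 1, 2$. Since $u \in \widetilde{\mathcal Z}$ ensures $u$, $u'$ and $u''$ are uniformly bounded and the Gaussian kernel decays exponentially, this follows by transferring the $x_2$-derivative of $G_{t_2}(x_2 - y)$ to $-\partial_y$ and integrating by parts, with no boundary contribution. With commutation in place the core estimate uses the elementary pointwise inequalities
\[
1 + |x_2| \leq (1 + |y|)(1 + |x_2 - y|) , \qquad 1 + x_2^2 \leq (1 + y^2)(1 + (x_2 - y)^2) ,
\]
where the first is just the triangle inequality and the second follows from $2 |y| |x_2 - y| \leq y^2 + (x_2 - y)^2$. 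Substituted into the convolution representation of $\e^{t_2 \Lambda_2} u'$ and $\e^{t_2 \Lambda_2} u''$, these yield
\[
\| (1 + | \cdot |) \partial_{x_2} \e^{t_2 \Lambda_2} u \|_\infty \leq \| (1 + | \cdot |) u' \|_\infty \int_\R (1 + |z|) G_{t_2}(z) \dd z ,
\]
together with the analogous bound featuring $(1 + z^2) G_{t_2}(z)$ for the second derivative; both integrals are elementary Gaussian moments.

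The main obstacle is obtaining the sharp constant $1$ as stated, since the naive argument above produces the growing factors $1 + \tfrac{2}{\sqrt{\pi}} t_2^{1/2}$ and $1 + 2 t_2$ in the first- and second-derivative terms. Two routes seem plausible. The first is a maximum-principle analysis of the quantities $(1 + |x_2|) \partial_{x_2} \e^{t_2 \Lambda_2} u$ and $(1 + x_2^2) \partial_{x_2}^2 \e^{t_2 \Lambda_2} u$, which inherit drift-diffusion equations from the heat equation and whose sign structure one would exploit to rule out growth of their suprema. The second, and pragmatically simpler, is to relax the conclusion to $\| \e^{t_2 \Lambda_2} \|_{\widetilde{\mathcal Z} \to \widetilde{\mathcal Z}} \leq C$ uniformly over $t_2 \in [0, T_2]$, which is all that the Gr\"onwall-type arguments in the proofs of Propositions \ref{prop:K2_dynamics} and \ref{prop:K2_dynamics_transcritical} actually require, at the cost of absorbing $C$ into their multiplicative constants. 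I would complete the lemma via whichever of these refinements proves compatible with the structure of the weighted norm.
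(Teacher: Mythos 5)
Up to the final estimate your argument coincides with the paper's: the kernel representation, the contraction of the sup norm by positivity and unit mass of the Gaussian, and the commutation $\partial_{x_2}^k \e^{t_2 \Lambda_2} u = \e^{t_2 \Lambda_2} \partial_{x_2}^k u$ for $k = 1,2$ obtained by integration by parts are exactly the steps used there. The divergence is at the point you flag yourself: you do not obtain \eqref{eq:operator_norm_2} with constant $1$, only $\| \e^{t_2 \Lambda_2} \|_{\widetilde{\mathcal Z} \to \widetilde{\mathcal Z}} \leq 1 + C ( t_2^{1/2} + t_2 )$, and neither proposed repair is carried out: the maximum-principle route is only named, and the ``relaxed'' route proves a different statement than the lemma. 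As a proof of Lemma \ref{lem:semigroup} as stated, the proposal is therefore incomplete. (A separate small slip: your second weight inequality $1 + x_2^2 \leq (1 + y^2)(1 + (x_2 - y)^2)$ is false as written --- try $x_2 = 2$, $y = 1$ --- and needs an extra factor $2$; this is harmless for the non-sharp bound but should be corrected.)

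That said, the obstacle you ran into is not an artefact of a clumsy estimate, and it is worth comparing with what the paper actually does. After the commutation step, the paper simply asserts that $\e^{t_2 \Lambda_2}$ contracts the weighted seminorms $\| (1 + |\cdot|) \, \cdot \, \|_\infty$ and $\| (1 + (\cdot)^2) \, \cdot \, \|_\infty$ with constant $1$. This does not follow from positivity and unit mass of the kernel alone, and in fact fails pointwise: for $v(y) = (1 + |y|)^{-1}$ one computes $(1 + x_2) \left( \e^{t_2 \Lambda_2} v \right)(x_2) = 1 + 2 t_2 / (1 + x_2)^2 + \dots > 1$ for $x_2$ large, and suitable truncations of $v$ do occur as $\partial_{x_2} u(\cdot, 0)$ for admissible $u(\cdot,0) \in \widetilde{\mathcal Z}$, so the term-by-term contraction invoked in the paper requires a genuine argument (or the constant in \eqref{eq:operator_norm_2} should be weakened). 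Your pragmatic fallback is exactly what the downstream arguments need: in Propositions \ref{prop:K2_dynamics} and \ref{prop:K2_dynamics_transcritical} the semigroup bound is only used for $t_2 \in [0, T_2]$ with $T_2 = 2\Omega$ finite, so a bound $\| \e^{t_2 \Lambda_2} \|_{\widetilde{\mathcal Z} \to \widetilde{\mathcal Z}} \leq C(T_2)$, absorbed into the Gr\"onwall constants, suffices. If you want the sharp constant $1$, you must genuinely control the interaction between the weight and the kernel (for instance via the maximum-principle idea you mention), not merely split the weight submultiplicatively.
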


\begin{proof}
	This can be shown directly using the solution formula
	\[
	u(x,t) = \e^{t \Lambda} u(x, 0) = 
	\frac{1}{\sqrt{4 \pi t}} \int_{-\infty}^\infty \e^{-(x - y)^2 / 4t} u(y,0) \dd y ,
	\]
	where here and in the remainder of the proof we drop the subscript ``2'' for notational simplicity. A direct application of a triangle inequality yields
	\[
	\| u(\cdot, t) \|_\infty \leq \| u(\cdot, 0) \|_\infty ,
	\]
	and integrating by parts leads to
	\[
	\partial_x u(x,t) = \e^{t \Lambda} \partial_x u(x, 0) , \qquad
	\partial_x^2 u(x,t) = \e^{t \Lambda} \partial_x^2 u(x, 0) ,
	\]
	for each $t > 0$. Using the left-most equation above, we obtain
	\[
		\| (1 + |\cdot| ) \partial_x u(\cdot,t) \|_\infty \leq 
		\| (1 + |\cdot| ) \partial_x u(\cdot,0) \|_\infty 
	\]
	and, similarly, the right-most equations leads to
	\[
	\| (1 + (\cdot)^2 ) \partial_x^2 u(\cdot,t) \|_\infty \leq 
	\| (1 + (\cdot)^2 ) \partial_x^2 u(\cdot,0) \|_\infty .
	\]
	for all $t > 0$. It follows that
	\[
	\| u(\cdot, t) \|_{\widetilde{Z}} \leq \| u(\cdot, 0) \|_{\widetilde{Z}} ,
	\]
	for all $t > 0$, thereby proving \eqref{eq:operator_norm_2} and the desired result.
\end{proof}

\section{Hypotheses for the center manifold theorem}
\label{app:center_manifold_theory}

The results in \cite{Haragus2010} apply to evolution equations on Banach spaces of the form
\[
\frac{\dd \bm{u}}{\dd t} = \Lop \bm{u} + \Nop (\bm{u}) ,
\]
where $\Lop$ is a closed, densely defined linear operator, and $\Nop$ is a nonlinear operator. In this paper, we work with the particular spaces $\mathcal X \subseteq \mathcal Y \subseteq \mathcal Z$ defined in \eqref{eq:spaces}, but the more general requirement in \cite{Haragus2010} is that $\mathcal Z \hookrightarrow \mathcal Y \hookrightarrow \mathcal X$ with continuous embedding (for the particular choice of spaces in this work, the embedding is naturally indued by the identity map).

In order to apply center manifold theorems from \cite{Haragus2010}, three main hypotheses need to be verified. These are formulated below in a notation which is consistent with the notation presented in the main body of the article.

The first hypothesis ensures that $\bm{u} = \bm{0}$ is a steady state, and that $\Lop$ is the linearisation. In order to state it, we let $\mathcal L(X,Y)$ denote the space of bounded/continuous linear operators from $X$ to $Y$.

\begin{hyp}
	\label{hyp:1}
	\cite[Hypothesis 2.1]{Haragus2010}
	The linear resp.~nonlinear operators $\Lop$ and $\Nop$ satisfy the following assumptions:
	\begin{enumerate}
		\item[(i)] $\Lop \in \mathcal L(\mathcal Z, \mathcal X)$;
		\item[(ii)] There exists an integer $k \geq 2$ and a neighbourhood $\mathcal V$ of $\bm{0}$ in $\widetilde{\mathcal Z}$ such that $N \in \mathcal C^k(\mathcal V, \widetilde{\mathcal Y})$, $\Nop(\bm{0}) = \bm{0}$ and $\D \Nop(\bm{0}) = \bm{0}$.
	\end{enumerate}
\end{hyp}

The second hypothesis is the well-known `spectral gap requirement', which applies to the spectrum $\sigma(\Lop)$. In order to state it succinctly, we write
\[
\sigma(\Lop) = \sigma_-(\Lop) \cup \sigma_0(\Lop) \cup \sigma_+(\Lop) ,
\]
where
\[
	\sigma_-(\Lop) = \left\{ \lambda \in \textup{Re} \lambda < 0 \right\} , \qquad
	\sigma_0(\Lop) = \left\{ \lambda \in \textup{Re} \lambda = 0 \right\} , \qquad
	\sigma_+(\Lop) = \left\{ \lambda \in \textup{Re} \lambda > 0 \right\} .
\]
The hypothesis can then be formulated as follows:

\begin{hyp}
	\label{hyp:2}
	\cite[Hypothesis 2.4]{Haragus2010}
	The spectrum $\sigma(\Lop)$ possesses a spectral gap, i.e.~there exists a constant $\gamma > 0$ such that
	\[
	\sup_{\lambda \in \sigma_-} \text{Re} (\lambda) < - \gamma, \qquad 
	\inf_{\lambda \in \sigma_+} \text{Re} (\lambda) > \gamma .
	\]
	Moreover, $\sigma_0$ contains only a finite number of eigenvalues, each of which has finite algebraic multiplicity.
\end{hyp}

Notice in particular that Hypothesis \ref{hyp:2} implies that any center subspace $\mathcal E_0$ is finite-dimensional.

The third and final main hypothesis pertains to the existence and uniqueness of solutions to the linearised problem which is obtained after projection onto the linear subspace which is complementary to the center eigenspace $\mathcal E_0$. In general settings, one must check the existence and uniqueness of solutions to this problem directly, i.e.~one has to verify \cite[Hypothesis 2.7]{Haragus2010}, which can be difficult in applications. Fortunately, a simplification holds in the case of semilinear equations \cite[Sec.~2.2.3]{Haragus2010}, for which this problem reduces to the problem of verifying a pair of resolvant estimates.

\begin{hyp}
	\label{hyp:3}
	\cite[Hypothesis 2.15]{Haragus2010}
	There exists constants $\omega_0 > 0$, $c > 0$ and $\alpha \in [0,1)$ such that for all $\omega \in \R$ satisfying $| \omega | \geq \omega_0$, $i \omega \in \textup{Res}(\Lop)$ and the following resolvent estimates are satisfied:
	\[
		\| (i \omega \text{Id} - \Lop) \|_{\mathcal L(\mathcal X)} \leq \frac{c}{| \omega |} , \qquad
		\| (i \omega \text{Id} - \Lop) \|_{\mathcal L(\mathcal Y, \mathcal Z)} \leq \frac{c}{| \omega |^{1 - \alpha}} ,
	\]
	where $\mathcal L(\mathcal X) := \mathcal L(\mathcal X, \mathcal X)$.
\end{hyp}

\section{Bounds for the proof of Lemma \ref{lem:resolvent_K1}}
\label{app:bounds}

The following result is used to prove the second resolvent bound in Lemma \ref{lem:resolvant_bound}. 

\begin{lemma}
	\label{lem:resolvant_bound}
	Let $a_1 \in \widetilde{\mathcal Z}$ be the function defined by \eqref{eq:a1}, $b_1 \in \widetilde{\mathcal Y}$, and $\omega \in \R$ be such that $| \omega | \geq 1$. There exists a constant $C \geq 0$ such that
	\begin{equation}
		\label{eq:bound1}
		\frac{\| (1 + (\cdot)^2 ) a_1'' \|_\infty}{\| (1 + | \cdot | )b_1' \|_\infty} \leq \frac{C}{| \omega |^{1/2}} .
	\end{equation}
\end{lemma}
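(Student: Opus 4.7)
The plan is to work directly from the explicit formula
\[
a_1''(x_1) = \frac{\e^{\eta x_1}}{2} \int_{x_1}^\infty \e^{-\eta \xi} b_1'(\xi)\,\dd\xi - \frac{\e^{-\eta x_1}}{2} \int_{-\infty}^{x_1} \e^{\eta \xi} b_1'(\xi)\,\dd\xi
\]
derived in the proof of Lemma \ref{lem:resolvent_K1}, combined with the pointwise estimate $|b_1'(\xi)| \leq B/(1+|\xi|)$, where $B := \|(1+|\cdot|)b_1'\|_\infty$. Writing $\eta = \eta_r + i \eta_i$, we recall from the proof of Lemma \ref{lem:resolvent_K1} that $\eta_r = |\omega|^{1/2}/\sqrt{2}$ for $|\omega| \geq 1$, so that $1/\eta_r$ scales like $|\omega|^{-1/2}$.

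First I would change variables $\xi = x_1 \pm \tau$ to rewrite the two exponential-weighted integrals in the form $\int_0^\infty \e^{-\eta\tau} b_1'(x_1 \pm \tau)\,\dd\tau$, and then apply the triangle inequality together with the pointwise bound on $b_1'$ to obtain
\[
|a_1''(x_1)| \leq \frac{B}{2} \int_0^\infty \e^{-\eta_r \tau} \left[ \frac{1}{1+|x_1+\tau|} + \frac{1}{1+|x_1-\tau|} \right] \dd\tau .
\]
By symmetry in $x_1 \mapsto -x_1$ it suffices to consider $x_1 \geq 0$. For the first summand one has $|x_1+\tau|= x_1+\tau$ and a single integration by parts (with $\dd v = \e^{-\eta_r \tau} \dd\tau$) yields the leading-order bound $1/(\eta_r(1+x_1)) + O(1/(\eta_r^2(1+x_1)^2))$. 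For the second summand I would split the $\tau$-integration at $\tau = x_1/2$: on $[0,x_1/2]$ the factor $1/(1+|x_1-\tau|) \leq 2/(1+x_1)$, so the contribution is again controlled by $C/(\eta_r(1+x_1))$; on $[x_1/2,\infty)$ one exploits $\e^{-\eta_r \tau} \leq \e^{-\eta_r x_1/2}$ to pull out an exponential-in-$x_1$ factor which dominates the (at worst logarithmic) growth of the remaining integrand.

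Multiplying by the weight $(1+x_1^2)$, the tail contribution is uniformly controlled by $CB/|\omega|^{1/2}$ via the elementary estimate $\sup_{x_1 \geq 0} x_1^n \e^{-\eta_r x_1 /2} \leq C_n /\eta_r^n$, and is therefore acceptable. The main obstacle is that the naive central-region bound grows like $(1+x_1^2) \cdot 1/(\eta_r(1+x_1)) \sim (1+x_1)/\eta_r$, which is not uniform in $x_1$. The plan to resolve this difficulty is to avoid taking absolute values inside the integral too early and instead to retain the \textit{difference} structure $b_1'(x_1+\tau) - b_1'(x_1-\tau)$ coming from the original formula for $a_1''$. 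Since this antisymmetric combination vanishes at $\tau = 0$, one can extract an additional factor of $\tau$ (by writing it as $\int_{-\tau}^{\tau} b_1''(x_1+s)\,\dd s$ wherever $b_1$ is twice differentiable, or more carefully via a second integration by parts against the Laplace kernel that exchanges $\eta\,\e^{-\eta\tau}$ for $\e^{-\eta\tau}$ and absorbs one power of $\tau$); this extra $\tau$ combines with $\e^{-\eta_r\tau}$ to yield an additional $1/\eta_r$ upon $\tau$-integration, upgrading the central-region estimate to $CB/(\eta_r^2(1+x_1)^2)$.

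Combining this refined central-region bound with the exponentially decaying tail estimate and multiplying by $(1+x_1^2)$ yields $(1+x_1^2)|a_1''(x_1)| \leq CB/\eta_r^2 + CB/\eta_r^3$, and since $|\omega| \geq 1$ implies $1/\eta_r^k \leq C_k/|\omega|^{1/2}$ for $k \geq 1$, the desired bound \eqref{eq:bound1} follows with a constant $C$ independent of $\omega$ and $b_1$.
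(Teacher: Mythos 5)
Your reduction to the convolution form $a_1''(x_1)=\tfrac12\int_0^\infty \e^{-\eta\tau}\bigl[b_1'(x_1+\tau)-b_1'(x_1-\tau)\bigr]\,\dd\tau$ and your diagnosis of where the naive estimate fails are both correct, and the tail region is indeed harmless (though you must keep part of the exponential, e.g. $\e^{-\eta_r\tau}\le \e^{-\eta_r x_1/4}\e^{-\eta_r\tau/2}$, since pulling out the whole factor leaves a divergent $\tau$-integral). The proof founders, however, at exactly the step you rely on to repair the central region: extracting a factor of $\tau$ from the antisymmetric difference. The hypothesis only gives $b_1\in\widetilde{\mathcal Y}$, i.e.\ $b_1$ is $\mathcal C^1$ with $B:=\|(1+|\cdot|)b_1'\|_\infty<\infty$; there is no second derivative available and, more importantly, no modulus of continuity of $b_1'$ is controlled by $B$, so the vanishing of the bracket at $\tau=0$ carries no quantitative smallness for $\tau>0$. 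Writing the difference as $\int_{-\tau}^{\tau}b_1''(x_1+s)\,\dd s$ needs a weighted bound on $b_1''$ that is neither assumed nor deducible, and the alternative you sketch --- a second integration by parts against the Laplace kernel --- does not avoid this: integrating by parts in $\tau$ either moves the derivative onto the bracket and reproduces precisely those $b_1''$ terms, or goes back from $b_1'$ to $b_1$ and brings in $\|b_1\|_\infty$, which does not appear on the right-hand side of \eqref{eq:bound1}.

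This is not a repairable detail within your framework: the upgraded central bound $C B\,\eta_r^{-2}(1+x_1)^{-2}$ is false for general $b_1\in\widetilde{\mathcal Y}$. Take $X\gg1$ and let $b_1'$ be a nonnegative smooth bump supported on $[X,X+|\eta|^{-1}]$ with $0\le b_1'\le (1+X)^{-1}$ and $\int b_1'\ge \tfrac12\,|\eta|^{-1}(1+X)^{-1}$; then $B\le 2$, while at $x_1=X$ the term $b_1'(X-\tau)$ vanishes and, since $|\eta\tau|\le 1$ on the support so that $\real \e^{-\eta\tau}\ge \e^{-1}\cos 1$, one gets $|a_1''(X)|\ge c\,|\eta|^{-1}(1+X)^{-1}$ for a universal $c>0$, hence $(1+X^2)|a_1''(X)|\ge c\,X/|\eta|$, which grows linearly in $X$ for fixed $\omega$. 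So no cancellation argument based solely on $B$ can produce the extra $(1+x_1)^{-1}$ of decay you need; indeed the same example indicates that \eqref{eq:bound1}, read with $C$ uniform over $b_1\in\widetilde{\mathcal Y}$, requires more information about $b_1$ in the far field than the weighted $b_1'$-norm alone. For comparison, the paper's own proof takes a different route: it splits $\{|x_1|\le M\}$ (where the weight is absorbed into the constant $1+M^2$, much as in your compact-region bound) from the far field, and there estimates the two integrals $I_{1,2}$ after the rescaling $\xi=x_1 s$, without ever invoking second-derivative information or the antisymmetry of the bracket; it therefore does not contain the cancellation mechanism your argument needs, and the missing step cannot be imported from it.
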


\begin{proof}
	In order to simplify notation, we rewrite the formula for $a_1''(x_1)$ in \eqref{eq:a1} as
	\[
	a_1''(x_1) = \frac{\e^{\eta x_1}}{2} I_1(x_1) - \frac{\e^{- \eta x_1}}{2} I_2(x_1) , 
	\]
	where
	\[
	I_1(x_1) := \int_{x_1}^\infty \e^{-\eta \xi} b_1'(\xi) d \xi , \qquad 
	I_2(x_1) := \int_{-\infty}^{x_1} \e^{\eta \xi} b_1'(\xi) d \xi .
	\]
	To prove the bound in \eqref{eq:bound1}, we shall consider the corresponding bound for $|x_1| \leq M$ and $|x_1| > M$ separately, where $M > 0$ is a constant which shall be chosen to be sufficiently large for the validity of asymptotic arguments as $| x_1 | \to \infty$ in the latter case. It is straightforward to obtain a bound corresponding to \eqref{eq:bound1} on the bounded set $\{|x_1| \leq M\}$, since
	\begin{equation}
		\label{eq:finite_bound}
			\frac{\sup_{x_1 \in [-M,M]} (1 + x_1^2) | a_1''(x_1) |}{\sup_{x_1 \in [-M,M]} (1 + |x_1|) | b_1'(x_1) |}
			\leq (1 + M^2) \frac{\sup_{x_1 \in [-M,M]} | a_1''(x_1) |}{\sup_{x_1 \in [-M,M]} | b_1'(x_1) |}
			\leq (1 + M^2) \frac{1}{\eta_r}
	\end{equation}
	where $\eta_r = \text{Re} (\eta)$ and the last inequality follows by a direct calculation using the expression for $a_1''(x_1)$ on $\{|x_1| \leq M\}$ and a pair of triangle inequalities.
	
	It remains to prove a similar bound for the far field, i.e.~for $|x_1| > M$. We start by deriving bounds for the integrals $I_1(x_1)$ and $I_2(x_1)$ which are valid when $|x_1| > M$ (assuming $M$ is chosen large enough). Consider first the integral $I_1(x_1)$ as $x_1 \to \infty$, which can be rewritten as
	\[
	I_1(x_1) = \int_1^{\infty} \e^{- \eta x_1 s} \tilde b_1'(s) \dd s
	\]
	after making the substitution $s = \xi / x_1$ and writing $\tilde b_1'(s) := b_1'(\xi / x_1)$. Notice that the supremum norm of $\tilde b_1'(s)$ and $b_1'(\xi / x_1)$ coincide on $\{ x_1 > M \}$, and that the lower integration bound is constant now. It follows that
	\begin{equation}
		\label{eq:I1_bound}
			\sup_{x_1 > M} I_1(x_1) \leq 
			\sup_{x_1 > M} |b_1'(x_1)|\int_1^\infty \e^{ - \eta_r x_1 s} \dd s
			= \e^{- \eta_r x_1} \frac{\sup_{x_1 > M} |b_1'(x_1)|}{\eta_r x_1} .
	\end{equation}
	The integral $I_2(x_1)$ can be rewritten as
	\[
	I_2(x_1) = \int_{-\infty}^0 \e^{\eta \xi} b'(\xi) \dd \xi + \int_0^1 \e^{\eta x_1 s} \tilde b_1'(s) \dd s .
	\]
	The first integral can be bounded directly using
	\begin{equation}
		\label{eq:integral_bound}
		\sup_{x_1 > M} \left| \int_{-\infty}^0 \e^{\eta \xi} b'(\xi) \dd \xi \right| \leq \frac{\sup_{x_1 > M} | b_1'(x_1) |}{\eta_r} ,
	\end{equation}
	and the second, which we denote by $\tilde I_2(x_1)$, can be bounded using arguments similar to those used in the derivation of a bound for $I_1(x_1)$ in \eqref{eq:I1_bound} above. We obtain
	\[
	\sup_{x_1 > M} \tilde I_2(x_1) \leq 
	\e^{\eta_r x_1} \frac{\sup_{x_1 > M} |b_1'(x_1)|}{\eta_r x_1} .
	\]
	Taking the supremum norm of $a_1''$, applying a triangle inequality and substituting the bounds obtained above leads to
	\[
	\sup_{x_1 > M} a_1''(x_1) \leq C \frac{\sup_{x_1 > M} |b_1'(x_1)|}{\eta_r x_1} ,
	\]
	for a constant $C > 1$ (which we introduce in order to absorb the exponentially small contribution due to \eqref{eq:integral_bound}). It follows that
	\[
	\frac{(1 + x_1^2) | a_1''(x_1) |}{(1 + x_1) |b_1' (x_1)|} \leq
	C \frac{1 + x_1^2}{x_1 (1 + x_1)} \frac{1}{\eta_r} \leq 
	\frac{\tilde C}{\eta_r} ,
	\]
	where $\tilde C = C + c$ for some small but fixed constant $c > 0$ (whose size depends on $M$), for all $x_1 > M$. This implies that
	\[
	\sup_{x_1 > M} (1 + x_1^2) | a_1''(x_1) | \leq 
	\frac{\tilde C}{\eta_r} \sup_{x_1 > M} (1 + x_1) |b_1' (x_1)| ,
	\]
	and therefore that
	\[
	\frac{\sup_{x_1 > M} (1 + x_1^2) | a_1''(x_1) |}{\sup_{x_1 > M} (1 - x_1) |b_1' (x_1)|} \leq \frac{\tilde C}{\eta_r} .
	\]

	Similar arguments -- which we omit here for brevity -- can be used to derive the analogous bound on $\{ x_1 < -M \}$, i.e.~
	\[
	\sup_{x_1 < M} a_1''(x_1) \leq C \frac{\sup_{x_1 < M} |b_1'(x_1)|}{\eta_r |x_1|} ,
	\]
	leading in turn to
	\[
	\frac{\sup_{x_1 < M} (1 + x_1^2) | a_1''(x_1) |}{\sup_{x_1 < M} (1 - x_1) |b_1' (x_1)|} \leq \frac{\tilde C}{\eta_r} .
	\]
	
	Combining these bounds with the bound obtained on $[-M,M]$, recall \eqref{eq:finite_bound}, it follows that
	\[
	\frac{\| (1 + (\cdot)^2 ) a_1'' \|_\infty}{\| (1 + | \cdot | )b_1' \|_\infty} \leq \frac{1 + M^2}{\eta_r} .
	\]
	Solving $\eta = \sqrt{1 + i \omega}$ for $\eta_r$ leads to $\eta_r = | \omega |^{1/2} / \sqrt{2}$. Substituting this into the above yields the bound \eqref{eq:bound1}, thereby completing the proof.
\end{proof}

\bibliographystyle{siam}

\bibliography{slow_passage}



\end{document}